\documentclass[11pt]{amsart}
\usepackage{amssymb}
\usepackage{hyperref}
\usepackage{amsmath,amsthm,amsxtra,mathtools}
\usepackage{eucal,mathrsfs}
\usepackage{enumitem}
\usepackage{a4wide}
\usepackage{xcolor}
\usepackage{soul}
\usepackage{comment}
\usepackage{bbm,stix}
\usepackage{subcaption}
\usepackage{wrapfig}
\usepackage{multicol}

\usepackage{tikz}
\usetikzlibrary{cd}

\newcommand{\ssubset}{\subset\joinrel\subset}

\newcommand{\N}{\mathbb{N}}
\newcommand{\R}{\mathbb{R}}
\newcommand{\Z}{\mathbb{Z}}
\newcommand{\Ind}{\mathbbm{1}}
\newcommand{\Id}{\text{I}_d}
\newcommand{\T}{\mathbb{T}}
\newcommand{\bbL}{\mathbb{L}_h}
\newcommand{\bbP}{\mathbb{P}_h}

\newcommand{\calB}{\mathcal{B}}
\newcommand{\calC}{\mathcal{C}}
\newcommand{\calD}{\mathcal{D}}
\newcommand{\calE}{\mathcal{E}}
\newcommand{\calF}{\mathcal{F}}

\newcommand{\calI}{\mathcal{I}}

\newcommand{\calL}{\mathscr{L}}
\newcommand{\calM}{\mathcal{M}}
\newcommand{\calN}{\mathcal{N}}
\newcommand{\calO}{\mathcal{O}}
\newcommand{\calP}{\mathcal{P}}

\newcommand{\calR}{\mathcal{R}}
\newcommand{\calT}{\mathcal{T}}

\newcommand{\supp}{\text{supp}}
\newcommand{\Lip}{\text{Lip}}

\newcommand{\ddiv}{\overline{\text{div}}\,}
\newcommand{\dnabla}{\overline{\nabla}}

\newcommand{\dd}[1]{\mathop{}\!\mathrm{d} #1}

\DeclareMathOperator{\Ent}{\text{Ent}}

\newtheorem{mainthm}{Theorem}

\newtheorem{theorem}{Theorem}[section]

\newtheorem{cor}[theorem]{Corollary}
\newtheorem{lemma}[theorem]{Lemma}
\newtheorem{proposition}[theorem]{Proposition}

\theoremstyle{definition}
\newtheorem{definition}[theorem]{Definition}
\newtheorem{remark}[theorem]{Remark}
\newtheorem{example}[theorem]{Example}

\numberwithin{equation}{section}

\title{Diffusive limit of random walks on tessellations via generalized gradient flows}
\author{Anastasiia Hraivoronska, Oliver Tse}
\date{\today}
\keywords{Random walks, tessellations, diffusive limits, generalized gradient flows, evolutionary convergence}

\begin{document}
\maketitle

\begin{abstract}
    We study asymptotic limits of reversible random walks on tessellations via a variational approach, which relies on a specific generalized-gradient-flow formulation of the corresponding forward Kolmogorov equation.
    We establish sufficient conditions on sequences of tessellations and jump intensities under which a sequence of random walks converges to a diffusion process with a possibly spatially-dependent diffusion tensor.
\end{abstract}


\section{Introduction}\label{sec_intro}

In this paper, we are interested in the limiting behavior of random walks on graphs corresponding to tessellations 
in the diffusive regime, known as the {\em diffusive limit}. A well-known example of such convergence is that of random walks on lattices to the Brownian motion (for instance, as a consequence of Donsker's theorem \cite[Theorem~14.1]{billingsley1999convergence}). Many generalizations of Donsker's theorem
have appeared in the literature, including scaling limits of the random conductance model \cite{andres2021quenched, biskup2011recent}, limit theorems for percolation clusters \cite{hambly2009parabolic, kumagai2014random}, diffusion limits for continuous-time random walks \cite{meerschaert2004limit, sandev2018continuous}, Brownian motion as a limit of deterministic dynamics \cite{kotelenez2005discrete}, and others \cite{caputo13, croydon2008local, varadhan1997diffusive}. 
The techniques used in these references are mainly probabilistic, and the underlying state space is usually the lattice or $\R^d$. On the other hand, not much is known about diffusive limits of random walks on general geometric graphs and tessellations. This paper aims to contribute to filling this gap by exploiting modern variational techniques.

Recently, there has been renewed interest in studying such limits for families of tessellations from the viewpoint of numerical schemes,
for instance, finite-volume methods \cite{barth2018finite,droniou2018,eymard2000finite} and flux discretization schemes \cite{eymard2006finite, heida2018convergences, kantner2020generalized} for parabolic equations such as the Fokker--Planck equation (see below \eqref{eq_diffusion}). These methods are known to converge for a restrictive class of tessellations. From the variational perspective, an approach similar to ours was used in \cite{disser2015gradient} (one-dimension) and \cite{forkert2020evolutionary} (multi-dimension) to prove convergence of the finite-volume method for the Fokker--Planck equation. 


The goal of this paper is twofold: (a) to provide sufficient conditions 
on the family of tessellations and transitions intensities of the random walk such that diffusive limits exist, and (b) to study the impact of these assumptions on the limit process. We believe that the outcome and the methodology used in this work can help with making advances in e.g.\ proving the convergence of more general numerical schemes, and in studying evolution equations in random environment.

\begin{figure}[h]
\begin{subfigure}{0.24\textwidth}
\includegraphics[width=0.95\linewidth, height=3.5cm]{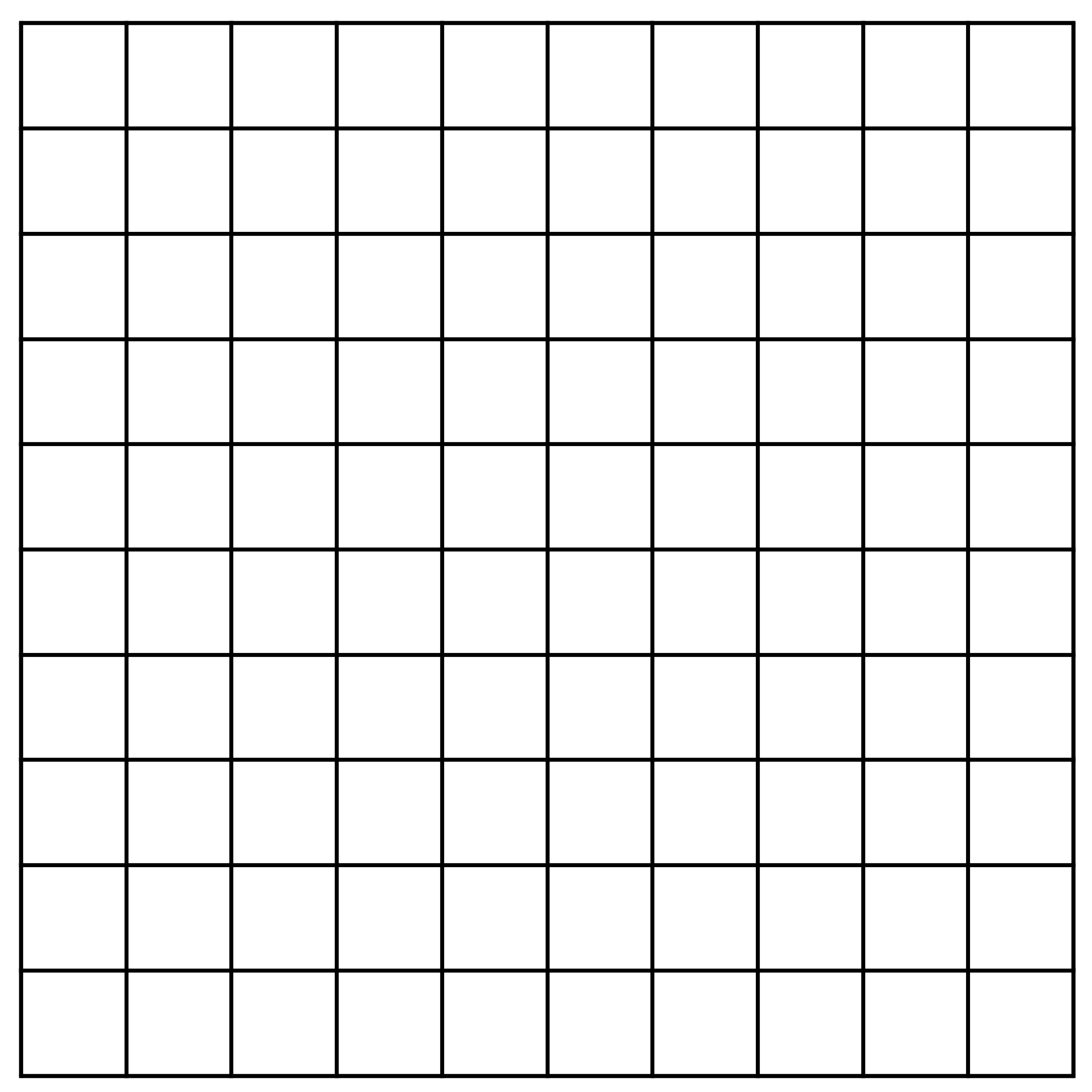} 
\label{fig_lattice}
\end{subfigure}
\begin{subfigure}{0.24\textwidth}
\includegraphics[width=0.95\linewidth, height=3.5cm]{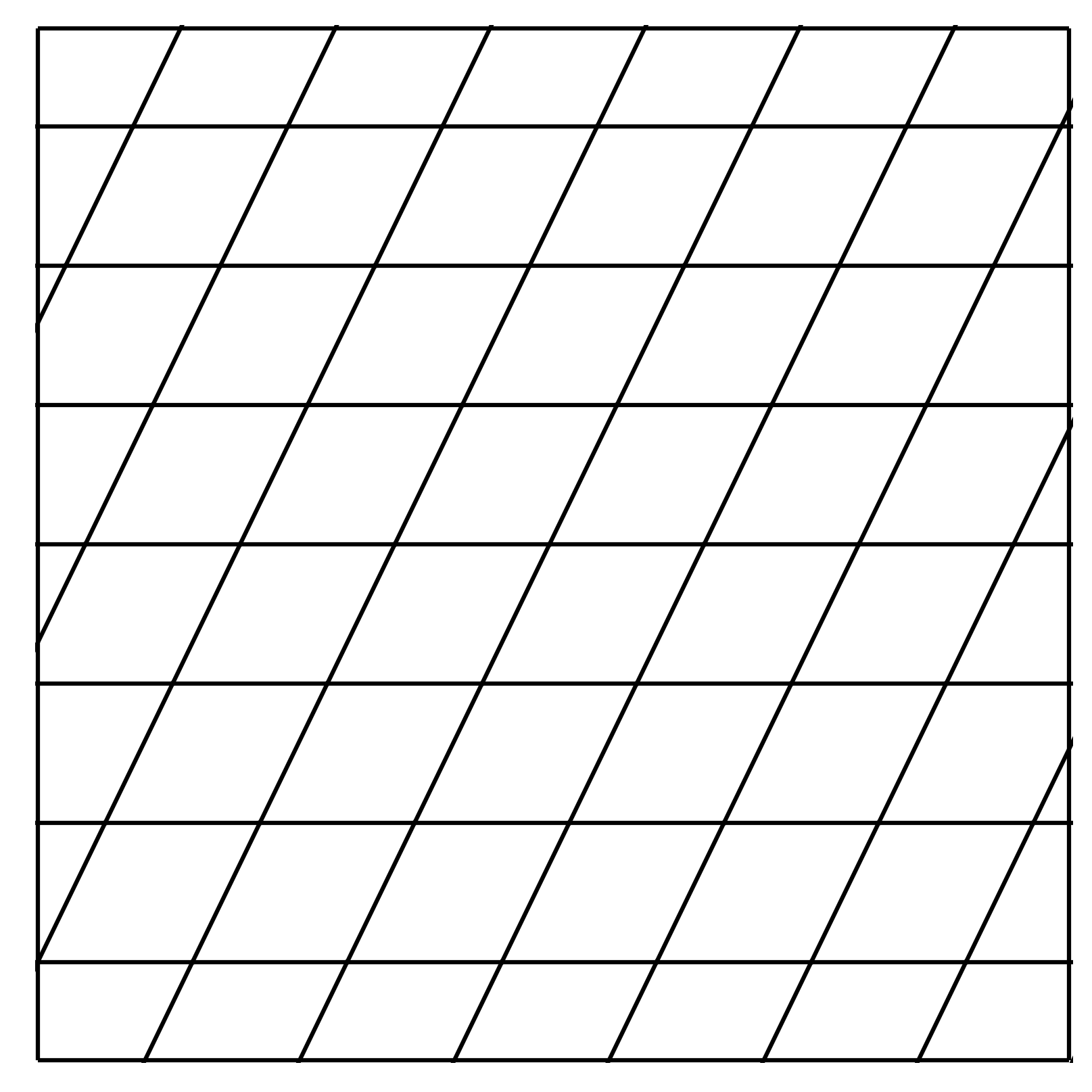}
\label{fig_tilted}
\end{subfigure}
\begin{subfigure}{0.24\textwidth}
\includegraphics[width=0.95\linewidth, height=3.5cm]{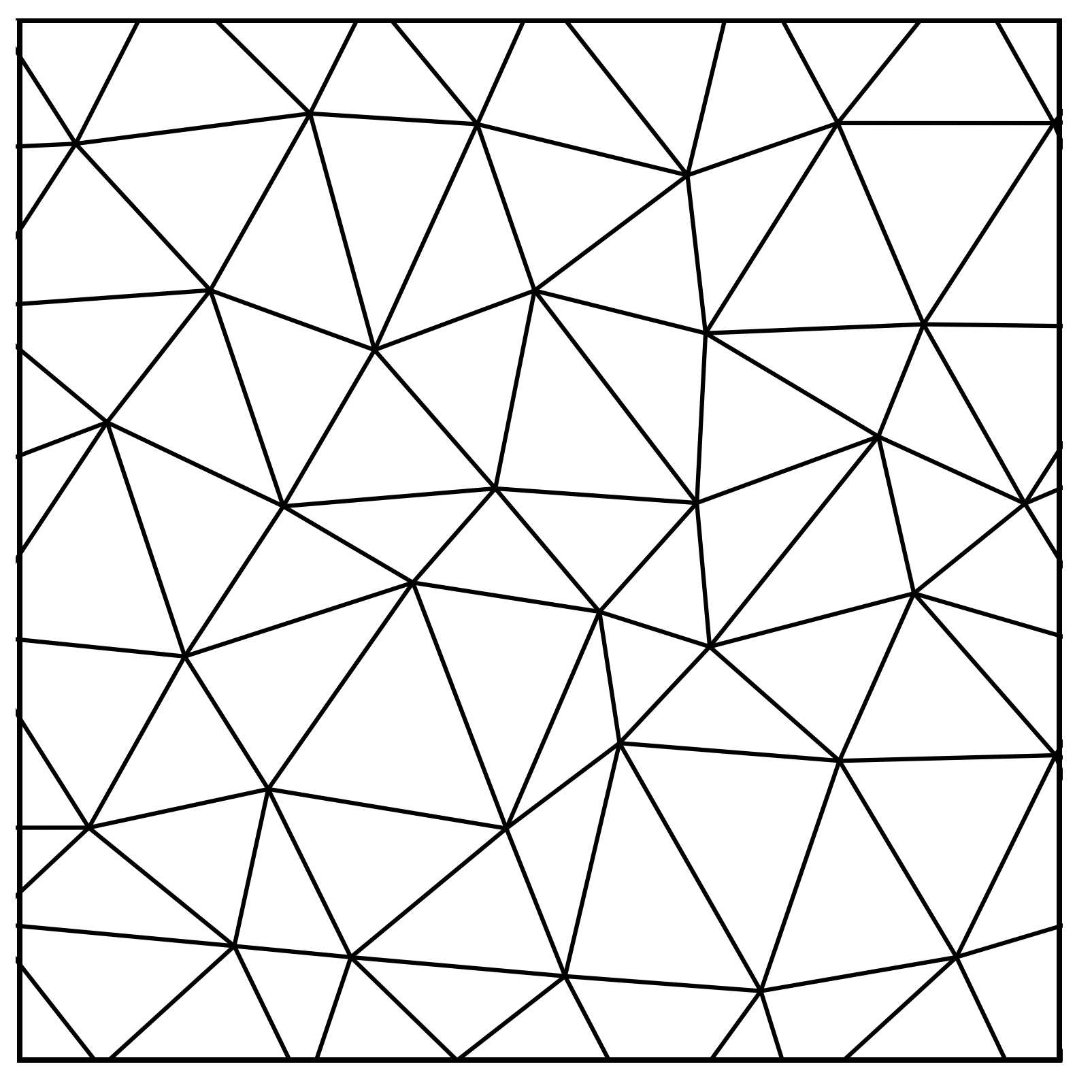} 
\label{fig_triang}
\end{subfigure}
\begin{subfigure}{0.24\textwidth}
\includegraphics[width=0.95\linewidth, height=3.5cm]{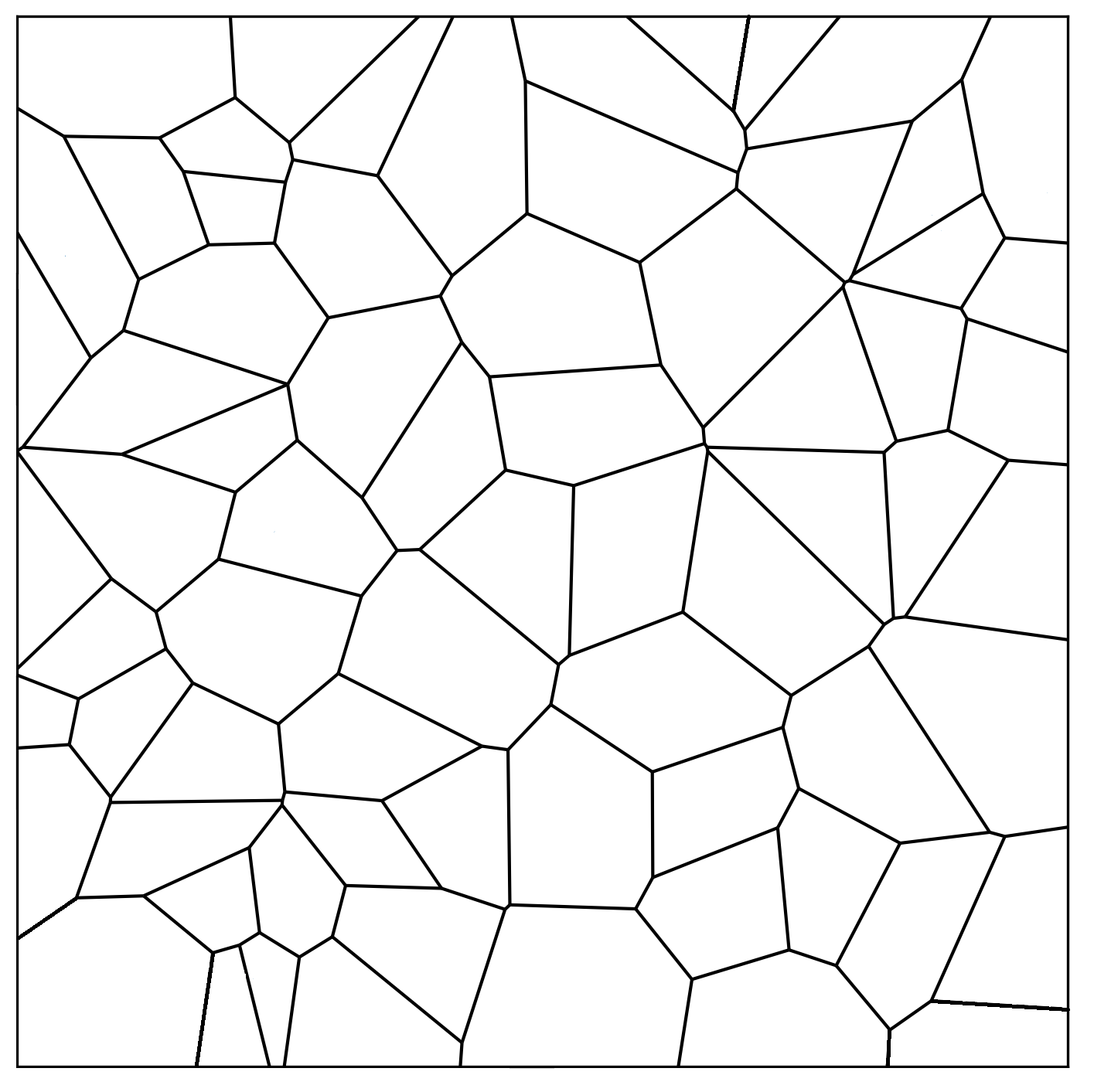} 
\label{fig_voronoi}
\end{subfigure}
\caption{Examples of tessellations.}
\label{fig_tessel}
\end{figure}

\medskip

To clarify the goal, we now introduce our setup: Let $\{(\calT^h,\Sigma^h)\}_{h>0}$ be a family of finite tessellations of a bounded and convex set $\Omega\subset\R^d$, where $\calT^h$ is the family of {\em cells} and $\Sigma^h\subset\calT^h\times\calT^h$ is identified with the set of {\em faces} (cf.\ Section~\ref{sec_tessellations} for the precise definition of cells and faces), and $\kappa^h:\Sigma^h\to[0,\infty)$ be transition kernels for the random walk. Examples of tessellations are shown in Figure~\ref{fig_tessel}. 
The small parameter $h>0$ stands for the characteristic size of the tessellation, i.e. the maximal diameter of the cells. The (time) marginal law of the random walk with initial law $\bar\rho^h$ is known 
to satisfy the {\em forward Kolmogorov equation} (see for example \cite[Section~6.3]{schuette2009markov})
\begin{equation}\label{eq_Kolmogorov}\tag{fK$_h$}
	\partial_t \rho_t^h = Q^*_h \rho_t^h,\qquad \rho_0^h = \bar\rho^h,
\end{equation}
with $Q^*_h$ being the dual of the generator $Q_h$ given, for any bounded function $\varphi \in B(\calT^h)$, by
\[
    (Q_h \varphi) (K) = \sum_{L \in \calT_K^h} [\varphi(L)-\varphi(K)] \kappa^h(K, L), \qquad  K\in\calT^h,
\]
where the sum is taken over all elements in the set of adjacent cells $\calT^h_K = \{ L\in\calT^h: (K, L) \in \Sigma^h \}$. Here, we restrict ourselves to random walks satisfying {\em detailed balance}, i.e.\ random walks admitting a stationary measure $\pi^h\in\calP(\calT^h)$ such that
\[
    \pi^h(K) \kappa^h(K, L) = \pi^h(L) \kappa^h(L,K) \qquad \forall (K, L) \in \Sigma^h.
\]

The questions we set out to answer are then: 
\begin{enumerate}[label=(\alph*)]
    \item Under which sufficient conditions on $\{(\calT^h,\Sigma^h)\}_{h>0}$ and $\{\kappa^h\}_{h>0}$ does the family of solutions $\{t\mapsto\rho_t^h\}_{h>0}$ to \eqref{eq_Kolmogorov} converge 
    to a non-degenerate diffusion process?
    \item What equation does the limiting object satisfy?
\end{enumerate}
For this purpose, we employ a variational formulation based on a generalized gradient structure for the forward Kolmogorov equation $\eqref{eq_Kolmogorov}$, which we describe briefly in the following (see Section~\ref{sec_generalized_gf} for more details).

\medskip

A gradient structure is completely defined by the \emph{driving energy} $\calE_h: \calP(\calT^h) \to [0, +\infty]$ and the \emph{dual dissipation potential} $\calR^*_h: \calP(\calT^h) \times \calB(\Sigma^h) \to [0, +\infty]$. Here, $\calP(X)$ and $\calB(X)$ denote the spaces of probability measures and bounded measurable functions on $X$ respectively. 
In the case of the random walk, the evolution is driven by the relative entropy with respect to the stationary measure $\pi^h$:
\begin{equation*}
    \calE_h(\rho^h) := \Ent (\rho^h | \pi^h) = 
    \begin{cases}
        \displaystyle \sum_{K\in\calT^h} \phi\bigl( u^h(K) \bigr) \pi^h(K) & \text{if } \rho^h \ll \pi^h \text{ with } u^h := \dfrac{\dd \rho^h}{\dd \pi^h}, \\
       +\infty & \text{otherwise,}
    \end{cases}
\end{equation*}
with the energy density $\phi(s) = s \log s - s + 1$. As for the dual dissipation potential $\calR_h^*$, a range of choices can give rise to a gradient structure for \eqref{eq_Kolmogorov}. The general form of $\calR^*_h$ studied in \cite{peletier2022jump} is
\begin{equation*}
    \calR^*_h(\rho^h, \xi^h) = \frac{1}{2} \sum_{(K, L)\in\Sigma^h} \Psi^* \bigl( \xi^h(K, L) \bigr) \alpha(u^h(K), u^h(L)) \kappa^h(K, L) \pi^h(K).
\end{equation*}
In this work, we make use of the so-called `cosh' gradient structure, for which 
\[
    \Psi^*(\xi) = 4 (\cosh(\xi/2) - 1)\quad\text{and}\quad \alpha(u, v) = \sqrt{u v}.
\]
This choice first appeared in \cite{grmela1993weakly}, was later derived from the large-deviation characterization in \cite{mielke2014relation}, and received significant attention in literature thereafter.

Another well-studied choice is the quadratic gradient structure. In particular, the quadratic structure was used to prove the convergence for the finite-volume discretization of the Fokker--Planck equation in \cite{forkert2020evolutionary}. In comparison, the adoption of the cosh-type gradient structure allows us to consider a more general class of tessellations, including a tilted $\Z^d$ tessellation (Example~\ref{example_tilted}), and we can dispense with the orthogonality assumption used in \cite{forkert2020evolutionary} and the finite-volume methods mentioned above.


\medskip

With the introduced $\calE_h$ and $\calR^*_h$, one can express \eqref{eq_Kolmogorov} in the form of a {\em continuity equation} \eqref{eq_CE_discrete} and the {\em force-flux relation} \eqref{eq_FF_discrete} for the density-flux pair $(\rho^h, j^h)$:
\begin{align*}
    \partial_t \rho^h_t + \ddiv j^h_t &= 0\qquad \text{on } (0,T)\times\calT^h. \tag{CE$_h$}\label{eq_CE_discrete} \\
    j^h_t &= D_2 \calR^*_h \bigl(\rho^h_t, -\dnabla \calE_h'(\rho^h_t)\bigr), \tag{FF$_h$}\label{eq_FF_discrete}
\end{align*}
where the flux $j^h$ is written in terms of $\calR^*_h$ and $\calE_h$ acting on $\rho^h$. Here, $D_2$ denotes the derivative in the second variable, $\dnabla$ is the graph gradient and $\ddiv$ is the graph divergence (defined in Section~\ref{sec_generalized_gf}). Using Legendre--Fenchel duality, one obtains a variational characterization of the solutions to \eqref{eq_FF_discrete} given by \emph{the energy-dissipation balance}, i.e.\ for any $T>0$, the pair $(\rho^h,j^h)$ satisfies
\begin{equation}\label{eq_edb_discrete}\tag{EDB$_h$}
    \calI_h(\rho^h, j^h) := \int_0^T \calR_h(\rho^h_t, j^h_t) + \calR_h^*\bigl(\rho^h_t, -\dnabla\calE_h'(\rho^h_t)\bigr) \,\dd t + \calE_h(\rho^h_T) - \calE_h(\rho^h_0) = 0.
\end{equation} 
with $\calR_h$ and $\calR_h^*$ being Legendre--Fenchel duality pairs w.r.t.\ the second variable. 

When the following chain rule applies for all pairs $(\nu^h,\eta^h)$ satisfying \eqref{eq_CE_discrete}
\begin{align}\label{eq_CR_discrete}\tag{CR$_h$}
    \frac{\dd}{\dd t}\calE_h(\nu_t^h) =  \langle \eta_t^h,\dnabla \calE_h'(\nu_t^h)\rangle\qquad\text{for almost every $t\in(0,T)$},
\end{align}
then one also has that $\calI_h(\nu^h, \eta^h) \geq 0$. In particular, a pair $(\rho^h,j^h)$ satisfying \eqref{eq_CE_discrete} and \eqref{eq_edb_discrete} is a minimizer of $\calI_h$,
which we use to define generalized gradient flow (GGF) solutions to \eqref{eq_Kolmogorov}:
\begin{align*}
    \left.\begin{gathered}
        (\rho^h, j^h)\; \text{satisfies}\\
        \text{\eqref{eq_edb_discrete}} 
    \end{gathered}\;\right\}
     \iff (\rho^h, j^h) = \arg\min \calI_h  \iff: (\rho^h,j^h) \text{ is GGF-solution of \eqref{eq_Kolmogorov}}.
\end{align*}
 
\subsubsection*{Outline of strategy} 

The variational framework described above allows us to prove the  discrete-to-continuuum convergence result in question by employing tools from the Calculus of Variations. This form of convergence is known as \emph{evolutionary $\varGamma$-convergence}. It was introduced by Sandier and Serfaty in \cite{sandier2004gamma} and led to numerous subsequent studies surveyed in \cite{mielke2016evolutionary,serfaty2011gamma}, see also \cite{mielke2021exploring} for recent developments and various forms of EDP convergence.

Our strategy comprises the following main steps:
\begin{enumerate}
    \item Prove \emph{compactness} for the family $(\rho^h, j^h)$ satisfying \eqref{eq_CE_discrete} and \eqref{eq_edb_discrete}. This allows us to extract a subsequence converging to a limiting  pair $(\rho, j)$.
    \item Prove \emph{lim\! inf inequalities} for all the functionals in the energy-dissipation functional $\calI_h$ to recover a limiting energy-dissipation functional $\calI$:
    $$
        \calI(\rho, j)\le \liminf_{h\to 0} \calI_h(\rho^h, j^h).
    $$
    \item Recover \emph{a limiting diffusion equation} of the type
    \begin{equation}\label{eq_limit_Kolmogorov}\tag{fK}
        \partial_t \rho_t = Q^* \rho_t.
    \end{equation}
    with some generator $Q$ from the limiting energy-dissipation functional $\calI$.
\end{enumerate}

Let us outline the main ideas of the steps above: {\em Step 1} raises the question of how to approach compactness in our discrete-to-continuous settings when density-flux pairs $(\rho^h, j^h)$ belong to different spaces for each $h>0$. Moreover, in the diffusive limit, we expect to obtain a curve $(\rho, j)$ satisfying the continuity equation with the usual divergence operator:
\begin{equation}\label{eq_CE}\tag{CE}
    \partial_t \rho_t + \text{div}\, j_t = 0 \qquad \text{on } (0,T)\times\R^d.
\end{equation}
For this purpose, we introduce a continuous reconstruction procedure in Section~\ref{section_reconstr_compact} such that any pair $(\rho^h, j^h)$ satisfying \eqref{eq_CE_discrete} induces a pair $(\hat{\rho}^h, \hat{\jmath}^h)$ that satisfies 
\eqref{eq_CE} \emph{exactly}. This allows us to `embed' the discrete objects into a common space of continuous objects, and serves as a link between the discrete and continuous problems (see Figure~\ref{fig_commutative_diagram}).
Another important aspect of the reconstruction procedure is that it allows us to prove a compactness result for GGF-solutions of \eqref{eq_Kolmogorov}, thereby allowing us to extract a subsequence that converges to a limiting curve $(\rho, j)$ satisfying \eqref{eq_CE}.

\begin{figure}[ht]
\begin{tikzcd}
\begin{array}{c}
\textbf{Discrete problem} \\[0.2em]
    (\rho^h,j^h) \text{ is GGF-solution of } \eqref{eq_Kolmogorov}
\end{array}\qquad
\arrow[r, dashed, start anchor={[xshift=-4ex]}, end anchor={[xshift=4ex]}]
& 
\qquad \begin{array}{c}
\textbf{Continuous limit} \\[0.2em]
     (\rho,j) \text{ is GF-solution of } \eqref{eq_limit_Kolmogorov}
\end{array} 
\arrow[dd, Leftrightarrow, "\text{\;\;\bf {\em Step 3}}"] \\
\begin{array}{c}
     (\rho^h,j^h)\in \eqref{eq_CE_discrete}: 
     \calI_h(\rho^h, j^h) = 0
\end{array} \qquad 
\arrow[u, Leftrightarrow] \arrow[d, Rightarrow, "\text{\;\;\bf {\em Step 1}}"] & \\
\begin{array}{c}
     \textbf{Continuous reconstruction} \\[0.2em]
     \text{Define } (\hat\rho^h, \hat{\jmath}^h)\in \eqref{eq_CE} :
     \hat{\calI}_h (\hat{\rho}^h, \hat{\jmath}^h) = 0
\end{array}\qquad \arrow[r, "\text{\bf {\em Step 2}}", start anchor={[xshift=-4ex]}, end anchor={[xshift=4ex]}] 
&
\qquad\begin{array}{c}
    \textbf{Continuous limit} \\[0.2em]
     (\rho,j)\in\eqref{eq_CE}:
     \calI(\rho, j) = 0
\end{array}
\end{tikzcd}
\caption{To pass to the discrete-to-continuum limits (dashed arrow in the first line), we employ the definition of the GGF-solution of \eqref{eq_Kolmogorov} and introduce continuous reconstructions of the discrete objects with $\hat{\calI}_h(\hat{\rho}^h, \hat{\jmath}^h) := \calI_h(\rho^h, j^h)$. Passing $h\to 0$ from the continuous reconstruction to the continuum limit comprises two ingredients: the compactness result and the liminf inequality for $\calI_h$. The energy-dissipation functional $\calI$ obtained in the limit gives rise to the limit equation \eqref{eq_limit_Kolmogorov}.}\label{fig_commutative_diagram}
\end{figure}
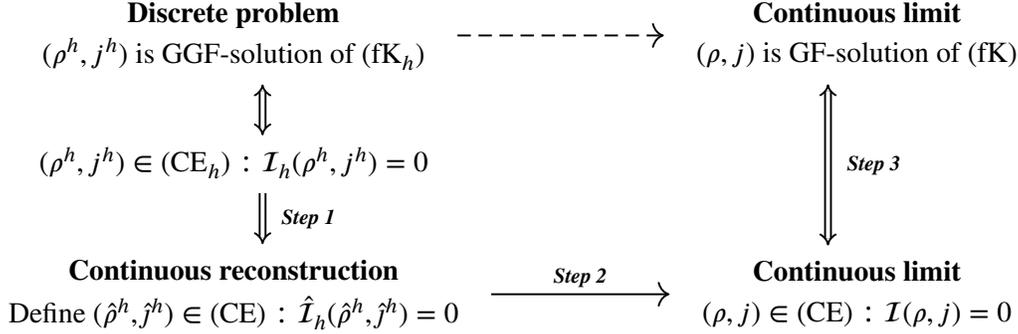

To prove the liminf inequality in {\em Step 2}, we study all the components of the energy-dissipation functional $\calI_h$ separately. The lower semicontinuity of the driving energy $\calE_h$ follows from standard result, whereas the challenging part lies in proving the result for the {\em dissipation potential} $\calR_h$ and the {\em Fisher information} $\calD_h:=\calR_h^*(\cdot, -D\calE_h(\cdot))$. For this purpose, we apply $\Gamma$-convergence techniques to obtain and characterize the variational limits $\calR$ and $\calD$ of $\{\calR_h\}_{h>0}$ and $\{\calD_h\}_{h>0}$ respectively. Here the assumptions placed on the family of tessellations $\{(\calT^h,\Sigma^h)\}_{h>0}$ and transition kernels $\{\kappa^h\}_{h>0}$ that we ask about in question (a) come into play, as the form of $\calR$ and $\calD$ depends strongly on the relationship between $\{(\calT^h,\Sigma^h)\}_{h>0}$ and $\{\kappa^h\}_{h>0}$. 

To illustrate the idea, the discrete Fisher information $\calD_h$ for the `cosh' gradient structure takes the form
\begin{equation*}
    \calD_h(\rho^h) = \sum_{(K, L)\in\Sigma^h} \left|(\dnabla \sqrt{u^h}) (K, L) \right|^2 \kappa^h(K, L) \pi^h(K) \quad \text{ with } u^h = \frac{\dd \rho^h}{\dd \pi^h}.
\end{equation*}
We prove that, under suitable assumptions on the families $\{(\calT^h,\Sigma^h)\}_{h>0}$, $\{\kappa^h\}_{h>0}$ and $\{\pi^h\}_{h>0}$ 
, the family 
$\{\calD_h\}_{h>0}$ $\Gamma$-converges to a limit functional of the form

$$
    \calD(\rho) = \int_\Omega \bigl\langle \nabla\sqrt{u},\T\nabla\sqrt{u}\bigr\rangle \dd \pi \qquad \text{with } u = \frac{\dd \rho}{\dd \pi},
$$
where $\T:\R^d\to \R^{d\times d}$ is a symmetric and positive definite tensor, and $\pi$ is the limit of the sequence $\{\pi^h\}_{h>0}$. All assumptions will be made precise in Section~\ref{sec_random_walks}, where we also formulate and state our main result. The next step in our strategy provides the interpretation of $\T$ and $\pi$ as objects related to a diffusion process on $\R^d$.


Morally, {\em Step 3} is the "reverse" procedure of formulating the forward Kolmogorov equation \eqref{eq_Kolmogorov} as the generalized gradient flow characterised by the energy-dissipation balance \eqref{eq_edb_discrete}. Once we have identified the limit energy-dissipation functional $\calI$, we can make use of classical gradient flow theory to deduce the form of the limit forward Kolmogorov equation \eqref{eq_limit_Kolmogorov}. In particular, we formally obtain the diffusion equation
\begin{equation}\label{eq_diffusion}
    \partial_t \rho_t = \text{div} \bigl(\T (\nabla\rho_t + \rho_t \nabla V) \bigr),
\end{equation}
with $V = -\log(\dd\pi / \dd\calL^d)$, thereby answering  question (b). If $\T$ arises from a homogeneous random walk on a uniform lattice (see Example~\ref{ex_lattice}), then we arrive at \eqref{eq_diffusion} with $\T=\text{Id}$, the identity tensor.


\medskip

The techniques we use to prove the $\liminf$ inequalities in \textit{Step 2} are similar to those used in \cite{forkert2020evolutionary}; however, the philosophy and results have considerable differences. The authors of \cite{forkert2020evolutionary} prove the convergence of the finite-volume discretization of the equation \eqref{eq_diffusion} with $\T=\Id$ to the original equation. We, on the other hand, start with a more general discrete evolution equation \eqref{eq_Kolmogorov} and, consequently, recover the diffusion equation \eqref{eq_diffusion} with variable coefficients $\T$.

\subsubsection*{Outline of the paper}
The paper is organized as follows. In Section~\ref{sec_random_walks}, we introduce assumptions on the sequence of tessellations and jump intensities that allow us to realize the described strategy. After that, we formulate the main result in Section~\ref{sec_main_results}. Moreover, in Section~\ref{sec_examples}, we discuss several examples that illustrate the applicability of our main result to specific families of tessellations. Section~\eqref{sec_gf} summarizes the definitions of the continuity equations and (generalized) GF-solutions of \eqref{eq_Kolmogorov} and \eqref{eq_limit_Kolmogorov}. In Section~\ref{section_reconstr_compact}, we specify the continuous reconstruction procedure and prove compactness result for the GGF-solutions of \eqref{eq_Kolmogorov}. Section~\ref{sec_gamma_convergence} is devoted to the $\Gamma$-convergence results for the Fisher information and the dual dissipation potential. Finally, we conclude with the proof of the main result in Section~\ref{sec_result}.

\subsection*{Acknowledgments} The authors acknowledge support from NWO Vidi grant 016.Vidi.189.102 $\\$ "Dynamical-Variational Transport Costs and Application to Variational Evolution".

\section{Assumptions and Main Results}\label{sec_random_walks}

In this section, we specify our 
assumptions on the families of tessellations, transition kernels, and stationary measures. After that we formulate our main result in Theorem~\ref{th_main_result}.

\subsection{Tessellations}\label{sec_tessellations} Let $\Omega\subset\R^d$ be an open bounded convex set. A tessellation $(\calT^h,\Sigma^h)$ of $\Omega$ consists of a family $\calT^h$ of mutually disjoint {\em cells} (usually denoted by $K$ or $L$) that are open and convex sets in $\Omega$, and a family $\Sigma^h$ of pairs of neighboring cells $\{ (K, L)\in\calT^h\times\calT^h : \mathscr{H}^{d-1} (\overline{K}\cap\overline{L}) > 0  \}$, where $\mathscr{H}^{d-1}$ is the $(d-1)$-dimensional Hausdorff measure. Examples of suitable tessellations include Voronoi tessellations, and meshes commonly used in finite-volume methods. The common face of $(K,L)\in \Sigma^h$ is denoted by $(K|L)$. The characterizing size of a tessellation is its maximum diameter:
$$
    h := \max_{K\in \calT^h} \left( {\text{diam}(K)} \right).
$$
The maximum diameter $h>0$ gives an upper bound on the volumes of the cells $|K|\le C_d h^d$ and faces $|(K|L)| \le C_{d-1} h^{d-1}$, where $C_d$, $C_{d-1}>0$ are universal constants depending only on the spatial dimension $d\ge 1$. In our work, it is also necessary to assume lower bounds on the volumes of the cells to prevent degeneration of cells. We make the following non-degeneracy assumption. 

\vspace{0.3cm}
\framebox{ \centering \begin{minipage}{0.92\linewidth}
\textbf{Non-degeneracy.}\label{def_zeta_regularity}
    There exist $\zeta \in (0, 1)$ such that
    \begin{enumerate}[label=(\roman*)]
        \item  For each $K\in\calT^h$ there is an inner ball $B(x_K, \zeta h) \subset K$ with $x_K = \intbar_K x \dd x$;
        \item For every $(K,L)\in\Sigma^h$ it holds that $|(K|L)| \geq \zeta h^{d-1}$.
    \end{enumerate}
\end{minipage}}
\vspace{0.3cm}

\begin{remark}\label{rem_cardinality}
The non-degeneracy assumption implies particularly that $|K| \geq C_d (\zeta h)^d$ for all $K\in\calT^h$, and also provides a uniform bound on the cardinality of neighboring cells (cf.\ \cite[Lemma~2.12]{gladbach2020scaling}):
\begin{equation*}
    C_\calN := \sup_{h>0} \sup_{K\in\calT^h} \text{card\,}\calT^h_K < \infty,
\end{equation*}
which follows from the following calculations:
    $$
        \sum_{L\in\calT^h_K} C_d (\zeta h)^d \leq \sum_{L\in\calT^h_K} |L| \leq \left| B(x_K, 2h) \right| \leq C_d (2h)^d \quad \Rightarrow \quad \text{card\,}\calT^h_K \leq \frac{2^d}{\zeta^d}.
    $$
Here, $\text{card\,}A$ is the cardinality of the set $A$.
\end{remark}

\begin{remark}
    While we closely follow the finite-volume setup when defining our tessellations, 
    we remark that in contrast to \cite{eymard2000finite, gladbach2020scaling}, we do not make the orthogonality assumption, i.e.\ requiring $x_K - x_L$ to be orthogonal to $(K|L)$ for $(K, L)\in\Sigma^h$.
\end{remark}

We now summarize the assumptions on the tessellations that is used within this paper.

\vspace{0.3cm}
\framebox{ \centering \begin{minipage}{0.92\linewidth}
\textbf{Assumptions on $\calT^h$}. We assume the family of tessellations $\{(\calT^h, \Sigma^h)\}_{h>0}$ to be such that 
\begin{align}\label{ass:tessellation}\tag{Ass$\calT$}\hspace{-1em}\left\{\quad
\begin{aligned}
    &\text{for any $h>0$ all cells $K\in\calT^h$ are open, convex, and mutually disjoint;}\\
    &\text{$\{(\calT^h, \Sigma^h)\}_{h>0}$ is non-degenerate with some $\zeta\in(0, 1)$ independent of $h$.}
    \end{aligned}\right.
\end{align}
\end{minipage}}
\vspace{0.3cm}

\subsection{Relations between jump intensities and tessellation}\label{sec_assumptions_relation} To obtain the diffusive limit, we need to properly relate the objects that define the dynamics, namely the jump kernels and the stationary measures $\{(\kappa^h, \pi^h)\}_{h>0}$, with the geometric properties of the tessellation. In this section, we emphasize all the assumptions we need to make on $\{(\kappa^h, \pi^h)\}_{h>0}$ in relation to $\{(\calT^h, \Sigma^h)\}_{h>0}$.

\vspace{0.3cm}
\framebox{ \centering \begin{minipage}{0.92\linewidth}
\textbf{Assumptions on $\pi^h$}. Let $\pi^h$ be a stationary measure for \eqref{eq_Kolmogorov} satisfying \emph{the detailed balance condition}
\begin{equation}\label{eq_detailed_balance}\tag{DB}
    \vartheta^h(K,L):=\pi^h(K) \kappa^h(K, L) = \pi^h(L) \kappa^h(L,K) \qquad \forall (K, L) \in \Sigma^h.
\end{equation}

We assume $\pi^h$ to have a density uniformly bounded from above and away from zero:
\begin{equation}\label{assumption_pi}
    0 < \pi_{\min} \le \inf_{h>0} \min_{K\in\calT^h} \frac{\pi^h(K)}{|K|} \le \sup_{h>0} \max_{K\in\calT^h} \frac{\pi^h(K)}{|K|} \le \pi_{\max} < \infty.
    \tag{B$\pi$}
\end{equation}

The continuous reconstruction $\hat\pi^h$ (cf.\ Section~\ref{section_reconstr_compact}) converges in the following sense:
$$
    \dd \hat\pi^h / \dd \calL^d \to \dd\pi / \dd\calL^d \quad \text{in } L^1(\Omega).
$$ 
We further assume that $\log (\dd\pi / \dd\calL^d) \in \Lip_b(\Omega)$.
\end{minipage}}
\vspace{0.3cm}

Without loss of generality, and for simplicity, we assume $\pi^h$ to have unit mass. 

\begin{example}
    A stationary measure $\pi^h$ satisfying the above mentioned assumptions can be obtained from a continuous measure $\pi$. In practice, the stationary measure is usually given in terms of a potential $V: \Omega\to\R$, i.e. $\pi = e^{-V} \calL^d$. In this case, we assume $V\in\Lip_b(\Omega)$ and set 
    $$
        \pi^h(K) := \pi(K) = \int_{K} e^{-V(x)} \dd x.
    $$
    Then $\pi^h$ converges to $\pi$ in the sense specified in Section~\ref{sec_recostruction}, since
    \begin{align*}
       \left\| \frac{\dd\hat\pi^h}{\dd\calL^d}- \frac{\dd\pi}{\dd\calL^d} \right\|_{L^1(\Omega)} &= \sum_{K\in\calT^h} \int_{K} \left|\intbar_{K} e^{-V(y)}\,\dd y-e^{-V(x)}\right|\,\dd x \\
       &\le \sum_{K\in\calT^h} \int_{K} \intbar_{K} \left|e^{-V(y)}-e^{-V(x)}\right|\dd y \,\dd x \\
       &\le C\sum_{K\in\calT^h} \int_{K} \intbar_{K} |y-x|\,\dd y \,\dd x \le Ch|\Omega|,
    \end{align*}
    and, therefore, $\pi^h$ satisfies the required assumptions.
\end{example}


Now we introduce scaling assumptions on the joint measure $\vartheta^h$ defined in \eqref{eq_detailed_balance}.

\vspace{0.3cm}
\framebox{ \centering \begin{minipage}{0.92\linewidth}
\textbf{Scaling of $\vartheta^h$}. We assume the existence of constants
$0<C_l<C_u<\infty$ independent of $h$:
\begin{equation}\label{assumption_nu}
    C_l \frac{|(K|L)|}{|x_L - x_K|} \leq \vartheta^h(K, L) \leq C_u \frac{|(K|L)|}{|x_L - x_K|}\qquad\forall (K, L)\in\Sigma^h.
    \tag{B$\vartheta$}    
\end{equation}
\end{minipage}}
\vspace{0.3cm}

\begin{remark}
    Combining \eqref{assumption_nu} with \eqref{assumption_pi} and the non-degenerate assumption gives rise to many possible formulations of uniform lower and upper bounds. We mention a reformulation of the upper bound that appears frequently in the proofs. 
    
    Dividing \eqref{assumption_nu} by $|K|$ and using the non-degeneracy assumption on $\calT^h$ yields
$$
    \frac{\pi^h(K)}{|K|} \kappa^h(K, L) \leq C_u \frac{|(K|L)|}{|K| |x_L - x_K|} \leq C_u \frac{\zeta h^{d-1}}{C_d (\zeta h)^{d+1}} = \frac{C_u}{C_d \zeta^d} \frac{1}{h^2}.
$$
Taking into account that $\pi^h(K) / |K| \geq \pi_{\min}$ due to \eqref{assumption_pi}, we arrive at
$$
    h^2 \sum_{L\in\calT^h_K} \kappa^h(K, L) \leq \frac{C_u}{C_d \zeta^d \pi_{\min}}(\text{card\,}\calT^h_K) .  
$$
Recalling that the non-degeneracy assumption provides a uniform upper bound on the cardinality of cells in $\calT^h_K$ (cf.\ Remark~\ref{rem_cardinality}), we obtain
\begin{equation}\label{assumption_kernel_uniform_upper_bound}
    \sup_{h>0} \sup_{K\in\calT^h} h^2 \sum_{L\in\calT^h_K} \kappa^h(K, L) \leq \frac{C_u C_\calN}{C_d \zeta^d \pi_{\min}}=:C_\kappa < \infty.
    \tag{{\sf UB}}
\end{equation}
\end{remark}


We need one final assumption on the compatibility of the joint measure $\vartheta^h$ and the geometry of the tessellation, namely the so-called \emph{zero-local-average} assumption. Intuitively, this assumption ensures that the limiting system remains a gradient flow.

\vspace{0.3cm}
\framebox{ \centering \begin{minipage}{0.92\linewidth}
\textbf{Zero-local-average}. For all cells $K\in\calT^h$ not touching the boundary, i.e.\ $\overline{K} \cap \partial\Omega = \emptyset$,
\begin{equation}\label{assumption_local_average}
    \sum_{L\in\calT^h_K} \vartheta^h(K, L) (x_K - x_L) = 0.
    \tag{{\sf A$_\text{loc}$}}
\end{equation}
\end{minipage}}
\vspace{0.3cm}

Similar assumptions to \eqref{assumption_local_average} have emerged in finite-volume schemes as explained in \cite[Section~5.2.6]{barth2018finite}, and in (stochastic) homogenization to ensure that the corrector problem has a solution \cite{faggionato2008random, gloria11, kozlov85}. Later, we will see that \eqref{assumption_local_average} is only a sufficient condition for the proofs, and can be replaced by a weaker asymptotic assumption (see \eqref{assumption_almost_minimizer} in Section~\ref{sec_fisher}). We stress that the assumptions on $\calT^h$, $\pi^h$ and scaling of $\vartheta^h$ are required in throughout this paper, but \eqref{assumption_local_average} or its asymptotic variant \eqref{assumption_almost_minimizer} are only required for the identification of the limit given in Section~\ref{sec_fisher}.

\begin{remark}
    If the tessellations $\{(\calT^h, \Sigma^h)\}_{h>0}$ are such that $(x_L - x_K) \perp (K|L)$ for all $(K, L)\in\Sigma^h$ and $\vartheta^h(K, L) = C |(K|L)| / |x_L - x_K|$, then \ref{assumption_local_average} is satisfied.
\end{remark}







\subsection{Main result}\label{sec_main_results}
\begin{definition}[Admissible continuous reconstruction]
    We call a pair $(\hat\rho^h, \hat\jmath^h)$ an {\em admissible continuous reconstruction} for a pair $(\rho^h, j^h)$ satisfying \eqref{eq_CE_discrete} if
    \begin{enumerate}[label=(\roman*)]
        \item $\hat\rho^h$ is defined by the piecewise constant reconstruction of the density:
        $$
            \frac{\dd \hat\rho^h_t}{\dd \calL^d} := \sum_{K\in\calT^h} \frac{\rho_t^h(K)}{|K|}\Ind_K;
        $$
        \item $\hat{\jmath}^h$ is such that $(\hat\rho^h, \hat{\jmath}^h)$ satisfies \eqref{eq_CE}.
    \end{enumerate}
\end{definition}

We state our main result in the following theorem.
\begin{mainthm}\label{th_main_result}
Let $\{(\calT_h,\Sigma_h)\}_{h>0}$ be a family of tessellations satisfying \eqref{ass:tessellation}. Further, let $\{(\rho^h,j^h)\}_{h>0}$ be a family of GGF-solutions to \eqref{eq_Kolmogorov} with $\{(\kappa^h,\pi^h)\}_{h>0}$ satisfying \eqref{assumption_pi}, \eqref{assumption_nu}, and \eqref{assumption_local_average}, and 
initial data $\{\bar\rho^h\}_{h>0}$ satisfying \[\sup\nolimits_{h>0} \calE_h(\bar\rho^h) < \infty.\] Then there exists a (not relabelled) subsequence of admissible continuous reconstructions $\{(\hat{\rho}^h, \hat{\jmath}^h)\}_{h>0}$ and a limit pair $(\rho,j)$ such that
\begin{enumerate}
    \item $(\rho, j)$ satisfies \eqref{eq_CE} with the density $u := \dd \rho/\dd \pi \in L^1((0, T); L^1(\Omega,\pi))$ and
        \begin{enumerate}[label=(\roman*)]
            \item  $\dd\hat{\rho}^h_t/\dd\hat\pi^h \to u$ strongly in $L^1(\Omega,\pi)$ for any $t\in [0, T]$;
            \item $\int_\cdot \hat{\jmath}^h_t \dd t \rightharpoonup^* \int_\cdot j_t \dd t$ in $\calM([0, T]\times \Omega)$.
        \end{enumerate}
        
    \item the following liminf estimate holds:
        $$
            \calI(\rho, j) \le \liminf_{h\to 0} \calI_h(\rho^h, j^h).
        $$
        
    \item $(\rho,j)$ is the gradient flow solution with the energy-dissipation functional given as
    $$
        \calI (\rho, j) = \int_0^T \int_\Omega \left|\frac{\dd j_t}{\dd \rho_t}\right|^2_{\T^{-1}} \dd \rho_t + \int_\Omega \left| \nabla \sqrt{u_t} \right|^2_{\T} \dd \pi \dd t + \calE(\rho_T) - \calE(\rho_0),
    $$
    where $\T:\R^d\to \R^{d\times d}$ is a symmetric and positive definite diffusion tensor.
    \end{enumerate}
\end{mainthm}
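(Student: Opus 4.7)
The plan follows the three-step scheme outlined in the introduction, exploiting the defining identity $\calI_h(\rho^h,j^h)=0$ of a GGF-solution together with the assumed bound on the initial entropy.

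\emph{Step 1 (compactness).} Nonnegativity of $\calR_h$ and of $\calD_h:=\calR_h^*(\cdot,-\dnabla\calE_h'(\cdot))$ in \eqref{eq_edb_discrete} yields $\sup_{t\in[0,T]}\calE_h(\rho_t^h)\le \calE_h(\bar\rho^h)$ together with uniform bounds on $\int_0^T\calR_h(\rho_t^h,j_t^h)\dd t$ and $\int_0^T\calD_h(\rho_t^h)\dd t$. Combining this entropy bound with \eqref{assumption_pi} and the $L^1$-convergence $\dd\hat\pi^h/\dd\calL^d\to\dd\pi/\dd\calL^d$, a de la Vall\'ee Poussin argument renders the densities $\dd\hat\rho_t^h/\dd\hat\pi^h$ equi-integrable, and the Fisher-information bound upgrades this to strong $L^1(\Omega,\pi)$ convergence at each time $t$. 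For the fluxes, the superlinear growth of $\Psi^*(\xi)=4(\cosh(\xi/2)-1)$ together with the uniform upper bound \eqref{assumption_kernel_uniform_upper_bound} yields a uniform total-variation bound on $\int_0^T|\hat{\jmath}_t^h|\dd t$, hence weak-$*$ compactness in $\calM([0,T]\times\Omega)$. Temporal equicontinuity of $t\mapsto\hat\rho_t^h$ then follows from \eqref{eq_CE_discrete} tested against Lipschitz functions, and an Arzel\`a--Ascoli argument produces the claimed pointwise-in-time convergence.

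\emph{Step 2 (liminf inequality).} Writing $\calI_h=\int_0^T\!(\calR_h+\calD_h)\,\dd t+\calE_h(\rho_T^h)-\calE_h(\rho_0^h)$ reduces the liminf estimate to three independent lower semicontinuity statements. Lower semicontinuity of the entropy under the strong $L^1(\Omega,\pi)$ convergence obtained in Step 1 is standard. The key analytic contribution is the $\Gamma$-liminf for the Fisher information $\calD_h$, to be carried out in Section~\ref{sec_gamma_convergence}: a discrete Taylor expansion rewrites $(\dnabla\sqrt{u^h})(K,L)$ as a difference quotient of $\sqrt{u^h}$ along $x_L-x_K$, and a local average over $L\in\calT^h_K$ allows the \eqref{assumption_local_average} assumption to annihilate the first-order error terms. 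What survives is a quadratic form whose local moments $\sum_{L\in\calT^h_K}\vartheta^h(K,L)(x_L-x_K)\otimes(x_L-x_K)$ converge, after rescaling by $|K|$, to a well-defined symmetric positive definite tensor $\T(x)$, producing the target limit $\int_\Omega|\nabla\sqrt{u}|_{\T}^2\dd\pi$. The corresponding liminf for $\int_0^T\calR_h\dd t$ is then derived by duality: identifying the $\Gamma$-limit of $\calR_h^*$ as the quadratic form with tensor $\T^{-1}$ yields, via Legendre--Fenchel, the primal limit $\int_\Omega|\dd j_t/\dd\rho_t|_{\T^{-1}}^2\dd\rho_t$.

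\emph{Step 3 (identification of the limit).} Combining Steps 1 and 2 with $\calI_h(\rho^h,j^h)=0$ gives $\calI(\rho,j)\le 0$. Conversely, a continuous chain rule of the form $\tfrac{\dd}{\dd t}\calE(\rho_t)=\langle j_t,-\nabla\calE'(\rho_t)\rangle$ holds along any pair $(\rho,j)$ satisfying \eqref{eq_CE} with $\int_0^T(\calR(\rho_t,j_t)+\calD(\rho_t))\dd t<\infty$, which via Fenchel--Young duality between $\calR$ and $\calR^*$ forces $\calI(\rho,j)\ge 0$. Hence $\calI(\rho,j)=0$, and classical gradient-flow theory identifies $(\rho,j)$ as a weak solution of \eqref{eq_diffusion}. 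The main obstacle throughout is Step 2: showing that the local moments of $\vartheta^h$ produce a well-defined and uniformly positive definite tensor $\T$, and that \eqref{assumption_local_average} is strong enough to eliminate the spurious first-order contributions that would otherwise introduce drift into the limit and destroy its gradient-flow structure.
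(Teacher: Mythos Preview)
Your three-step scheme (compactness, liminf, chain rule) matches the paper's strategy, and Steps~1 and~3 are close to what the paper does. The description of Step~2 for the Fisher information, however, misidentifies the mechanism and would not yield the $\Gamma$-liminf as written.

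You describe the argument as a Taylor expansion of $(\dnabla\sqrt{u^h})(K,L)$ along $x_L-x_K$, with \eqref{assumption_local_average} ``annihilating first-order error terms'' after averaging over neighbours. That is a recovery-sequence (limsup) heuristic valid for \emph{smooth} $u$; it cannot produce the liminf, which must hold for \emph{arbitrary} $L^2$-converging sequences of piecewise-constant, non-differentiable $\sqrt{u^h}$. The paper instead runs a localization argument in the style of Dal Maso: it shows the $\overline{\Gamma}$-limit $\calF^\pi(v,\cdot)$ is the restriction of a Radon measure with two-sided Sobolev bounds (the lower bound coming from a discrete translation estimate, Lemma~\ref{lemma_bounds_Dsup}, not from any expansion), and then invokes an integral representation theorem to express the integrand through Dirichlet minimisation problems $\text{M}(\varphi_\xi,Q_\varepsilon(x))$. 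Assumption \eqref{assumption_local_average} enters only at this final identification stage, and its role is not to cancel expansion errors but to certify that the affine functions $\varphi_h^{x,\xi}(K)=\langle\xi,x_K-x\rangle$ are \emph{discrete minimisers} (i.e.\ discretely harmonic), so that $\text{M}_h(\varphi_h^{x,\xi},Q_\varepsilon)=\calF_h^\pi(\varphi_h^{x,\xi},Q_\varepsilon)$ and the integrand can be computed as $\langle\xi,\T(x)\xi\rangle\,\dd\pi/\dd\calL^d$.

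Two smaller points: the total-variation bound on the reconstructed fluxes uses the superlinearity of $\Psi$ (the primal, via the bound on $\int_0^T\calR_h\,\dd t$), not of $\Psi^*$; and the limit of $\calR_h^*$ carries the tensor $\T$, whose Legendre dual then produces $\T^{-1}$ in $\calR$. The paper also does not need a full $\Gamma$-limit for $\calR_h^*$---only the asymptotic upper bound of Lemma~\ref{lemma_limsup_dualdiss}, which suffices for the duality argument in Theorem~\ref{th_liminf_inequalities}(i).
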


\begin{remark}\label{remark_limit_diffusion} The equation corresponding to the energy-dissipation balance given by $\calI(\rho, j)$ is
\begin{equation}\label{eq_diffusion2}
    \partial_t \rho_t = \text{div} \bigl(\T (\nabla\rho_t + \rho_t \nabla V) \bigr) \qquad \text{on } (0, T)\times\Omega,
\end{equation}
with the no-flux boundary conditions
$\T (\nabla\rho_t + \rho_t \nabla V) \cdot n = 0$ on $\partial\Omega$, where $V = -\log(\dd\pi / \dd\calL^d)$ is the potential corresponding to the limit stationary measure $\pi$. 
\end{remark}


To characterize the diffusion tensor $\T$, we introduce a tensor $\T^h$:
$$
    \T^h(x) = \sum_{K\in\calT^h} \Ind_K(x) \sum_{L\in\calT^h_K} \kappa^h(K, L) (x_L - x_K) \otimes (x_L - x_K),\qquad x\in\R^d,
$$
where $x_K = \intbar_K x \dd x$. Then $\T$ is obtained as a limit of $\T^h$.

\subsection{Examples}\label{sec_examples} To help with getting an intuition for our assumptions and the main result, we present several examples in which the diffusion tensor $\T$ can be calculated explicitly.

\begin{example}[Lattice $h\Z^d$]\label{ex_lattice}
    Consider the simplest tessellation which corresponds to the lattice $\calT^h = h \Z^d$ with $d\geq 2$. We choose the uniform stationary measure $\pi^h(K) = |K| = h^d$ for all $K\in\calT^h$ and the uniform joint measure $\vartheta^h(K, L) 
    = h^{d-2} / 2$ for all $(K, L)\in\Sigma^h$. It follows that the transition kernel is $\kappa^h(K, L) = 1 / (2 h^2)$. Let $\{e_1, \cdots, e_d\}$ be the basis vectors in $\R^d$. One can always choose the orientation of the basis to be such that for any $K\in\calT^h$ the vectors $(x_L - x_K)$ pointing to the neighboring cells are $\{h e_1, -h e_1, h e_2, -h e_2, \cdots, h e_d, - h e_d\}$. Then the diffusion tensor $\T^h$ becomes the identity matrix for all $h>0$ and, consequently, $\T=\text{Id}$.
    
    We can also make a slightly different choice of the joint measure:
    $$
        \vartheta^h(K, L) = \frac{c_i}{2} h^{d-2} \quad \text{ if the face } (K|L) \text{ is orthogonal to } e_i,
    $$
    where $c_i > 0$ are independent of $h$. In this way, we make the jump intensities different in the different directions, i.e.\
    $ \kappa^h(K, L) = c_i /(2h^2)$ if the face $(K|L)$ is orthogonal to $e_i$. Note that \eqref{assumption_local_average} is still satisfied. In this case,
    $$
        \T = \left( \begin{array}{cccc}
            c_1 & 0 & \cdots & 0 \\
            0 & c_2 & \cdots & 0 \\
            \cdots & \cdots & \ddots & \cdots \\
            0 & 0 & \cdots & c_d
        \end{array} \right).
    $$
    Unsurprisingly, this example illustrates that different transitions intensities for the same family of tessellations may lead to different limit diffusion tensors.
\end{example}

\begin{example}[Tilted $h\Z^2$] \label{example_tilted}
Let $\calT^h$ be a tilted version of the lattice $h\Z^2$ as shown in the Figure~\ref{fig_tilted_example}. The tilt is given by the parameter $\alpha = \cos \gamma$, $\gamma\in [0, \pi/2 )$, where $\alpha = \cos (\pi/2)$ corresponds to $h\mathbb{Z}^2$. Each cell $K \in \calT^h$ has four neighbors $\left\{ K_r, K_u, K_l, K_d \right\}$, where the subscript stands for right, up, left, and down neighbors of $K$.

\begin{figure}[h]
\includegraphics[width=4cm, height=4cm]{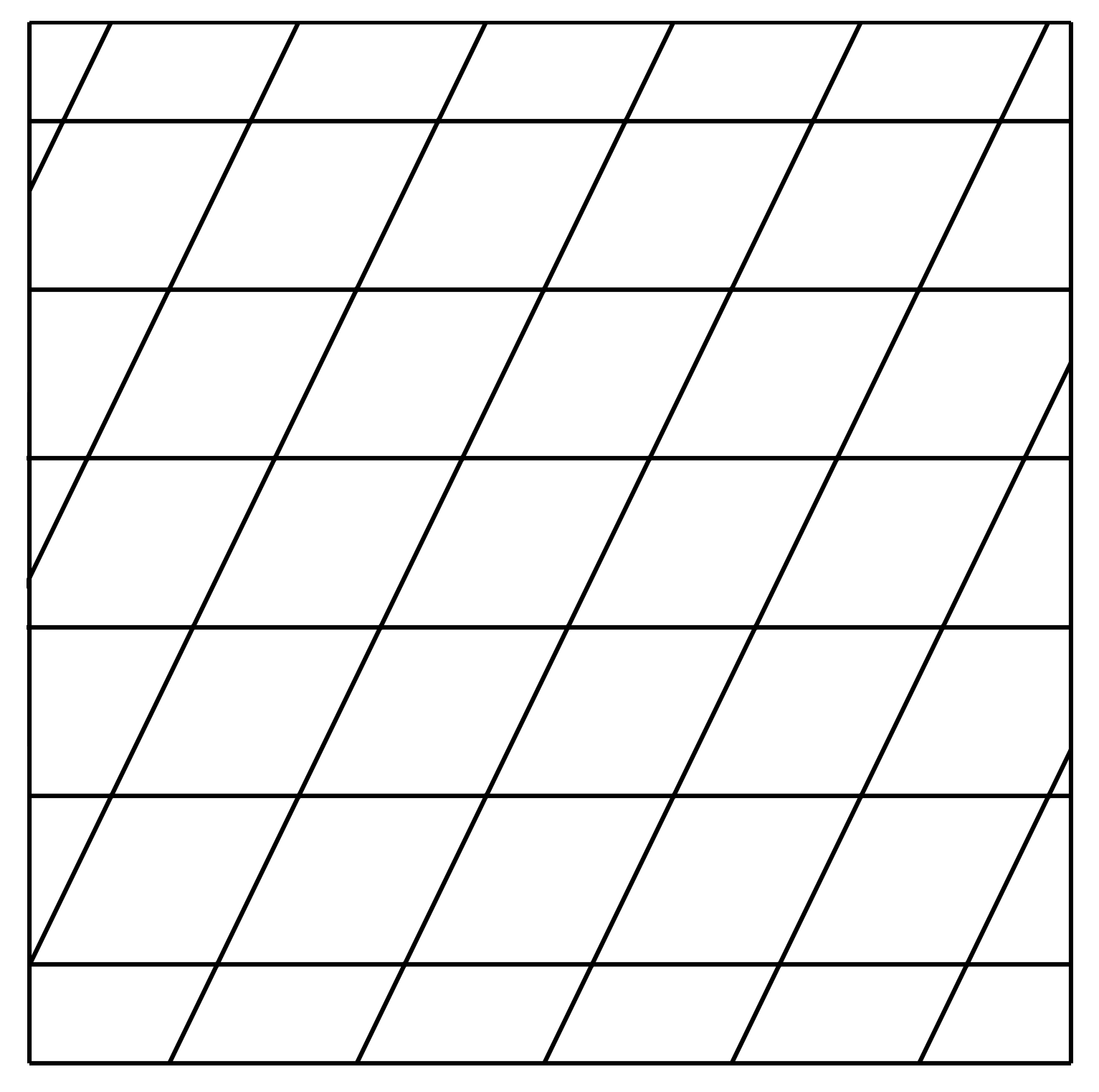}
\caption{Tilted $\Z^d$ tessellation.}
\label{fig_tilted_example}
\end{figure}

 We fix the basis $\{e_1, e_2\}$ such that $(x_{K_r} - x_K) = h e_1$. In this basis, we have that 
 \begin{gather*}
    (x_{K_l} - x_K) = - h e_1,\qquad (x_{K_u} - x_K) = h (\alpha^2 e_1 + (1 - \alpha^2) e_2),\\ (x_{K_d} - x_K) = - h (\alpha^2 e_1 + (1 - \alpha^2) e_2) .
 \end{gather*}
 The tensor $\T^h$ then takes the form
\begin{align*}
    \T^h(x) = \big[ &\left( \kappa^h(K, K_l) + \kappa^h(K, K_r) \right) h^2 e_1 \otimes e_1 \\
    &+ \left( \kappa^h(K, K_u) + \kappa^h(K, K_d) \right) h^2 \left( \alpha^2 e_1 + (1 - \alpha^2) e_2 \right) \otimes \left( \alpha^2 e_1 + (1 - \alpha^2) e_2 \right) \big]. 
\end{align*}
Notice that for any nonnegative $\kappa^h$, 
we can never obtain $\T = \Id$. For uniform kernels $\kappa^h = 1 / (2h^2)$, we get
\begin{equation*}
    \T(x) = 
        \left( \begin{array}{cc}
        1 + \alpha^4 & \alpha^2 (1 - \alpha^2) \\
        \alpha^2 (1 - \alpha^2) & (1 - \alpha^2)^2 
    \end{array} \right).
\end{equation*}
Analogous to the previous example, this example illustrates that the same transition intensities for different families of tessellations may lead to different limit diffusion tensors.
\end{example}

\begin{example}[] Let $\Omega=[-1, 1]\subset\R$. Consider the tessellation $\calT^h = \calT^h_- \cup \calT^h_+$ consisting of cells with length $h/2$ on $(-\infty, 0]$, i.e.\ $\calT^h_- = \{(-kh/2, -(k-1)h/2), k\in\N \}$ and the cells with the length $h$ on $[0, \infty)$, i.e.\ $\calT^h_+ = \{((k-1)h, kh), k\in\N\}$ (see Figure~\ref{fig_two_parts}). 

\begin{figure}[h]
\includegraphics[width=0.8\textwidth]{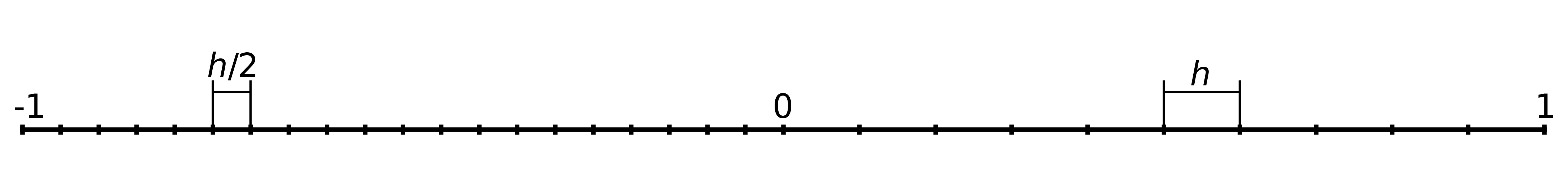}
\caption{A tessellation on $[-1, 1]$ consisting of cells with length $h/2$ in the left half and cells with length $h$ in the right half.}
\label{fig_two_parts}
\end{figure}

Consider $\vartheta^h(K, L) = 1 / |x_L - x_K|$ for $(K, L)\in\Sigma^h$, which immediately implies that \eqref{assumption_local_average} is satisfied, 
with the uniform stationary measure $\pi^h(K) = h$ for all $K\in\calT^h$. Then the tensor $\T^h$ reads
$$
    \T^h(x) = \sum_{L\in\calT^h_K} \frac{\vartheta^h(K, L)}{\pi^h(K)} |x_L - x_K|^2 = \sum_{L\in\calT^h_K} \frac{|x_L - x_K|}{\pi^h(K)} \qquad \text{for } x\in K.
$$
In particular, for $x \in (-1, -h/2)$, $\T^h(x) = 1$, and for $x \in (h, 1)$, $\T^h(x) = 2$. Therefore, in the limit $h\to 0$, we obtain $\T(x) = 2 \Ind_{(0, 1)}(x) + \Ind_{(-1, 0)}(x)$.

This last example illustrates how one obtains spatially inhomogeneous diffusion tensors in the limit from the inhomogeneity in the tessellations.
\end{example}

\section{Gradient structures: discrete and continuous}\label{sec_gf}

In this section, we collect all the necessary definitions and statements regarding the gradient flow formulation of the discrete random walk and of the postulated continuous diffusion governed by \eqref{eq_Kolmogorov} and \eqref{eq_limit_Kolmogorov} respectively.

\subsection{Generalized gradient structure for random walks}\label{sec_generalized_gf}

In the introduction we outlined the generalized gradient flow formulation for random walks. Gradient structures for Markov jump processes on graphs were first introduced in the independent works of Maas \cite{maas2011gradient}, Mielke \cite{mielke2013geodesic}, and  Chow, Huang and Zho \cite{chow2012fokker}. Motivated by large-deviation theory, a different form of gradient structure for discrete random walks was discovered by Mielke, Peletier and Renger in \cite{mielke2014relation}. Unlike the earlier gradient structures, these large-deviation inspired gradient structures did not fit into the classical framework of gradient flow theory (see Section~\ref{sec_gs_diffusion}). Based on the energy-dissipation balance, a new framework for these structures, now known as {\em generalized gradient structures}, was recently established in \cite{peletier2022jump}. This section collects rigorous definitions and concepts following the framework developed in \cite{peletier2022jump}.

\medskip

We use the {\em graph gradient} and {\em graph divergence} defined respectively as
\begin{align*}
   \dnabla: \calB(\calT^h) \to \calB(\Sigma^h),\qquad  & (\dnabla \varphi^h)(K, L) = \varphi^h(L) - \varphi^h(K) \qquad \text{for all } (K, L)\in\Sigma^h; \\
   \ddiv: \calM(\Sigma^h) \to \calM(\calT^h),\qquad & (\ddiv j) (K) = \sum_{L\in\calT^h_K} \bigl[j(K, L) - j(L, K)\bigr] \qquad \text{for all } K\in\calT^h;
\end{align*}
where $\calB(X)$ denote the space bounded measurable functions on $X$, and $\calM(X)$ the space of finite (signed) measures equipped with the topology of weak$^*$ convergence, i.e.\ convergence against $\calC_0(X)$, the space of continuous functions that vanish at infinity. Furthermore, we denote by $|\nu|$ the total variation measure of a measure $\nu\in\calM(X)$, and by $\calP(X)$ the space of probability measures equipped with the topology of narrow convergence, i.e.\ convergence against $\calC_b(X)$, the space of continuous and bounded functions.

\medskip

We begin by defining the class of solutions for the continuity equation \eqref{eq_CE_discrete}.

\begin{definition}\label{def_CE_discrete} 

We call a pair $(\rho^h, j^h)$, where
\begin{itemize}
    \item $\rho^h\in\calC([0,T];\calP(\calT^h))$ is a curve of measures defined on the tessellation $\calT^h$, and
    \item $j^h$ is a measurable family of {\em fluxes} $j^h = (j^h_t)_{t\in[0,T]} \subset \calM(\Sigma^h)$ with $\int_0^T | j^h_t | (\Sigma^h) \dd t < \infty$,
\end{itemize}
a solution of the discrete continuity equation
    \begin{equation*}
        \partial_t \rho^h + \ddiv j^h = 0 \qquad \text{in } (0, T) \times \calT^h,
        \tag{CE$_h$}
    \end{equation*}
if for all $\varphi^h\in \calB(\calT^h)$ and $[s, t] \subset [0, T]$,
    \begin{equation}\label{eq_CE_discrete_distr}
        \sum_{K\in \calT^h} \varphi^h(K) \rho^h_{t}(K) - \sum_{K\in \calT^h} \varphi^h(K) \rho^h_{s}(K) = \int_{s}^{t} \sum_{(K,L)\in\Sigma^h} (\dnabla \varphi^h)(K, L) j^h_r(K, L) \dd r.
    \end{equation}
We denote by $\mathcal{CE}_h(0, T)$ the set of solutions to the discrete continuity equation \eqref{eq_CE_discrete}.
\end{definition}

Following \cite{peletier2022jump}, we define a generalized gradient flow solution of \eqref{eq_Kolmogorov} as follows:

\begin{definition}[GGF solutions]\label{def_GGF_solution}
    A curve $\rho^h\in\calC([0,T]; \calP(\calT^h))$ is said to be an $(\calE_h, \calR_h, \calR^*_h)$ -generalized gradient flow solution of \eqref{eq_Kolmogorov} with initial data $\bar\rho^h\in \calP(\calT^h)\cap \text{dom}(\calE_h)$ if
    \begin{enumerate}[label=(\roman*)]
        \item $\rho_0^h= \bar\rho^h$ in $\calP(\calT^h)$;
        \item there exists a measurable family $(j^h_t)_{t\in[0, T]} \subset \calM(\Sigma^h)$ such that $(\rho^h, j^h) \in \mathcal{CE}_h(0, T)$ with
        $$
            \int_s^t \calR_h(\rho^h_r, j^h_r) + \calD_h(\rho^h_r) \, \dd r + \calE_h(\rho^h_t) = \calE_h(\rho^h_s) \quad \text{for all } [s,t]\subset [0,T];
        $$
        where
        \[
            \calD_h(\rho) := \inf\Bigl\{ \liminf_{n\to\infty} \calR^*_h(\rho_n,-\dnabla \calE_h'(\rho_n)) : \rho_n\rightharpoonup \rho, \quad \sup_{n\ge 0} \calE_h(\rho_n) <\infty, \quad \rho_n > 0\Bigr\},
        \]
        i.e.\ $\calD_h$ is a lower-semicontinuous envelope of $\rho\mapsto \calR^*(\rho,-\dnabla \calE_h'(\rho))$.
        
        \item the chain rule holds, i.e.\
        \begin{align*}\tag{CR$_h$}
            \frac{\dd}{\dd t}\calE_h(\rho_t^h) =  \langle j_t^h,\dnabla \calE_h'(\rho_t^h)\rangle\qquad\text{for almost every $t\in (0,T)$}.
        \end{align*}
    \end{enumerate}
\end{definition}

\medskip

We now make specific choices for all the components of the energy-dissipation functional \eqref{eq_edb_discrete} introduced in Section~\ref{sec_intro}.

\medskip

\noindent{\em The driving energy} $\calE_h: \calP(\calT^h) \to [0, +\infty]$ is taken to be the relative entropy with respect to the stationary measure $\pi^h$, i.e.
\begin{equation*}
    \calE_h(\rho^h) := \Ent (\rho^h | \pi^h) = 
    \begin{cases}
        \displaystyle \sum_{K\in\calT^h} \phi\bigl( u^h(K) \bigr) \pi^h(K) & \text{if } \rho^h \ll \pi^h \text{ with } u^h := \frac{\dd \rho^h}{\dd \pi^h}, \\
       +\infty & \text{otherwise,}
    \end{cases}
\end{equation*}
with the energy density $\phi(s) = s \log s - s + 1$.

\medskip

\noindent{\em The dual dissipation potential} $\calR_h^*: \calP(\calT^h)\times \calB(\Sigma^h) \to [0, \infty)$, as defined in the introduction, takes the form
\begin{equation*}
    \calR^*_h(\rho^h, \xi^h) = \frac{1}{2} \sum_{(K, L)\in\Sigma^h} \Psi^* \left( \xi^h(K, L) \right) \sqrt{u^h(K) u^h(L)}\, \theta^h(K, L),
\end{equation*}
where 
$
    \Psi^*(\xi) = 4 \left( \cosh{(\xi/2)} - 1 \right).
$

\medskip

\noindent{\em The dissipation potential} $\calR_h: \calP(\calT^h)\times \calM(\Sigma^h) \to [0, +\infty]$ is the Legendre--Fenchel dual of $\calR^*$ w.r.t.\ to its second variable. In particular, it takes the explicit form
\begin{equation}\label{eq_dissipation_potential}
    \calR_h(\rho^h, j^h) = \begin{cases}
         \displaystyle \frac{1}{2} \sum_{(K, L)\in\Tilde{\Sigma}^h} \Psi\left( \frac{w^h(K, L)}{\sqrt{u^h(K) u^h(L)}}\right) \sqrt{u^h(K) u^h(L)} \,\vartheta^h(K, L) & \text{if } |j|(\Sigma^h \backslash \Tilde{\Sigma}^h) = 0, \\
        +\infty & \text{if } |j|(\Sigma^h \backslash \Tilde{\Sigma}^h) > 0,
    \end{cases}
\end{equation}
where $w^h := \dd j^h / \dd \vartheta^h$, $\Tilde{\Sigma}^h := \{(K, L)\in\Sigma^h: u^h(K)\, u^h(L) > 0 \}$, and
$$
    \Psi(s) = 2 s \log \left( \frac{s + \sqrt{s^2 + 4}}{2} \right) - \sqrt{s^2 + 4} + 4.
$$


\noindent{\em The Fisher information} $\calD_h: \calP(\calT^h) \to [0, +\infty]$ is defined as
    \begin{equation*}
        \calD_h(\rho^h) = \sum_{(K, L)\in\Sigma^h} \left|\left(\dnabla \sqrt{u^h} \right) (K, L) \right|^2 \vartheta^h(K, L) \quad \text{ with } u^h = \frac{\dd \rho^h}{\dd \pi^h}. 
    \end{equation*}

With the introduced choice of the energy-dissipation functional, the chain-rule estimate holds \cite[Corollary~5.6]{peletier2022jump} for any admissible curve with finite dissipation:
\begin{proposition}[Chain-rule estimate]
    For any curve $(\rho^h, j^h)\in\mathcal{CE}_h(0, T)$ with finite dissipation, i.e.\
    \[
      \int_0^T \calR_h(\rho^h_t, j^h_t) + \calD_h(\rho^h_t) \, \dd t <\infty,
    \]
    the chain rule \eqref{eq_CR_discrete} holds, thus leading to
    $$
        \calI_h (\rho^h, j^h) = \int_0^T \calR_h(\rho^h_t, j^h_t) + \calD(\rho^h_t) \dd t + \calE(\rho^h_T) - \calE(\rho^h_0) \geq 0.
    $$
\end{proposition}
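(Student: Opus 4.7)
The plan is to split the statement into two ingredients: first, establish the chain rule \eqref{eq_CR_discrete} along any $(\rho^h, j^h)\in\mathcal{CE}_h(0,T)$ with finite dissipation; and second, combine it with a pointwise Young inequality to deduce $\calI_h(\rho^h, j^h) \geq 0$. The second step is essentially immediate: by the Legendre--Fenchel duality between $\calR_h$ and $\calR_h^*$ in their second slot, for every $\xi^h\in\calB(\Sigma^h)$ one has $\langle j_t^h, \xi^h\rangle \leq \calR_h(\rho_t^h, j_t^h) + \calR_h^*(\rho_t^h, \xi^h)$. Specialising to $\xi^h = -\dnabla \calE_h'(\rho_t^h) = -\dnabla \log u_t^h$ and using the elementary identity $2\sqrt{ab}\bigl(\cosh\bigl(\tfrac{1}{2}\log(b/a)\bigr) - 1\bigr) = (\sqrt{b}-\sqrt{a})^2$ for $a,b>0$ gives $\calR_h^*(\rho_t^h, -\dnabla\log u_t^h) = \calD_h(\rho_t^h)$. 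Integrating in time and subtracting the chain-rule identity $\int_0^T \langle j_t^h, \dnabla\calE_h'(\rho_t^h)\rangle\,\dd t = \calE_h(\rho_T^h) - \calE_h(\rho_0^h)$ then yields $\calI_h(\rho^h, j^h)\geq 0$.

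The substantive task is establishing the chain rule. Since $\calT^h$ is finite and $\pi^h(K)>0$ for every $K\in\calT^h$, any solution of \eqref{eq_CE_discrete_distr} with finite dissipation has $t\mapsto u_t^h(K)$ absolutely continuous for each $K$. To handle the singularity of $\log$ at $0$, I would introduce the bounded $C^1$ approximation $\phi_\varepsilon(s) := (s+\varepsilon)\log(s+\varepsilon) - (s+\varepsilon) + 1 - \varepsilon\log\varepsilon + \varepsilon$, so that $\phi_\varepsilon'(s) = \log(s+\varepsilon)$, and the corresponding regularised entropy $\calE_h^\varepsilon$. A direct AC computation on the finite graph then yields
\[
\calE_h^\varepsilon(\rho_T^h) - \calE_h^\varepsilon(\rho_0^h) = \int_0^T \bigl\langle j_t^h,\dnabla\log(u_t^h + \varepsilon)\bigr\rangle\,\dd t.
\]
The key structural observation enabling passage to $\varepsilon\to 0$ is that finiteness of $\calR_h(\rho_t^h, j_t^h)$ forces $j_t^h$ to vanish on edges with $u_t^h(K)\,u_t^h(L)=0$, by the $+\infty$ branch in \eqref{eq_dissipation_potential}. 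Consequently, on the effective support $\widetilde{\Sigma}^h$ the quantity $\log u_t^h$ is finite and $\dnabla\log(u_t^h+\varepsilon)\to \dnabla\log u_t^h$ pointwise almost everywhere in $(t,K,L)$.

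The main obstacle is justifying the limit on the right-hand side uniformly in $\varepsilon$. The natural dominating function is supplied by Young's inequality applied with $\xi^h = \pm\dnabla\log(u^h+\varepsilon)$, which bounds $|\langle j^h, \dnabla\log(u^h+\varepsilon)\rangle|$ by $\calR_h(\rho^h, j^h) + \calR_h^*(\rho^h, \dnabla\log(u^h+\varepsilon))$. Using the cosh identity once more together with the AM--GM-derived inequality $(\sqrt{u^h(L)+\varepsilon}-\sqrt{u^h(K)+\varepsilon})^2 \leq (\sqrt{u^h(L)}-\sqrt{u^h(K)})^2$, one checks $\calR_h^*(\rho^h, \dnabla\log(u^h+\varepsilon)) \leq \calD_h(\rho^h)$ uniformly in $\varepsilon$, producing an $L^1_t$-dominating function by hypothesis. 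One can then apply dominated convergence in $t$ together with termwise convergence in the finite sum over $\Sigma^h$ to recover \eqref{eq_CR_discrete} in the limit; convergence of the left-hand side is handled by dominated convergence using $\phi_\varepsilon(s)\to \phi(s)$ together with the uniform bound $u_t^h \leq 1/\pi_{\min}$ afforded by \eqref{assumption_pi}. This is essentially the programme carried out in \cite{peletier2022jump}.
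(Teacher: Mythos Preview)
The paper does not give its own proof of this proposition; it simply cites \cite[Corollary~5.6]{peletier2022jump}. Your proposal is a correct outline of exactly that argument: regularise the entropy density, differentiate the finite-dimensional functional along the absolutely continuous curve, and pass $\varepsilon\to 0$ using a dominated-convergence argument whose dominating function comes from the Young inequality and the monotonicity $(\sqrt{a+\varepsilon}-\sqrt{b+\varepsilon})^2\le(\sqrt{a}-\sqrt{b})^2$. The identification $\calR_h^*(\rho^h,-\dnabla\log u^h)=\calD_h(\rho^h)$ via the cosh identity and the observation that $j_t^h$ must vanish on edges with $u_t^h(K)u_t^h(L)=0$ are precisely the structural ingredients used there.

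One small inaccuracy: the bound $u_t^h\le 1/\pi_{\min}$ you quote does not follow from \eqref{assumption_pi}, which only gives $\pi^h(K)\ge \pi_{\min}|K|$ and hence $u_t^h(K)\le 1/(\pi_{\min}|K|)$. This is harmless for the argument, since for fixed $h$ the tessellation is finite and $\min_{K}\pi^h(K)>0$, so a uniform-in-$t$ bound on $u_t^h$ is still available; only the constant you wrote is wrong.
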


In the next lemma, we list the properties of $(\Psi, \Psi^*)$ that will be used in the proof of Lemma~\ref{lemma_properties_flux}.

\begin{lemma}\label{lemma_Psi} The Legendre-Fenchel pair $(\Psi, \Psi^*)$ are such that
    \begin{enumerate}[label=(\roman*)]
        \item $\Psi$ is even and convex, $\Psi(0) = 0$, and $\Psi$ is strictly increasing for $s > 0$.
        \item For $s, p > 0$ the mapping $s\mapsto s\Psi(p/s)$ is decreasing.
        \item For $s>0$, $\Psi(s)$ has a strictly increasing inverse $\Psi^{-1} : [0, \infty] \to [0, \infty]$ satisfying
        $$
            \Psi^{-1} (r) \leq \frac{r}{\xi} + \frac{\Psi^*(\xi)}{\xi} \quad \text{for all } \xi > 0.
        $$
        \item $\Psi^*(\xi) \leq \xi^2 \cosh(\xi/2)$.
    \end{enumerate}
\end{lemma}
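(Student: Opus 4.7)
The plan is to derive all four items from the explicit form $\Psi^*(\xi) = 4(\cosh(\xi/2) - 1)$ together with the defining Legendre--Fenchel relation $\Psi(s) = \sup_{\xi\in\R}\{s\xi - \Psi^*(\xi)\}$. I would first record the properties of $\Psi^*$ that do all the work: it is smooth, even, strictly convex (since $(\Psi^*)''(\xi) = \cosh(\xi/2) > 0$), $\Psi^*(0) = 0$ is its unique minimum, and it is superlinear at infinity. Consequently the sup above is attained at the unique $\xi(s)$ solving $2\sinh(\xi/2) = s$, and $\Psi'(s) = \xi(s) = 2\,\mathrm{arcsinh}(s/2)$.

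For (i), evenness and convexity of $\Psi$ are inherited from $\Psi^*$ via the Legendre transform, while $\Psi(0) = -\inf_\xi \Psi^*(\xi) = 0$ is immediate from $\Psi^*(0)=0$ being the global minimum; strict monotonicity on $(0,\infty)$ follows since $\Psi'(s) = 2\,\mathrm{arcsinh}(s/2) > 0$ for $s>0$. For (ii), I would invoke the standard fact that for a strictly convex $\Psi$ with $\Psi(0) = 0$, the slope $u\mapsto \Psi(u)/u$ is strictly increasing on $(0,\infty)$. Writing $f(s) := s\,\Psi(p/s)$ with $u := p/s$, a direct differentiation gives $f'(s) = \Psi(u) - u\Psi'(u) = -u^2\,\frac{d}{du}\bigl(\Psi(u)/u\bigr) < 0$, so $f$ is strictly decreasing.

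For (iii), strict monotonicity of $\Psi$ on $[0,\infty)$ together with $\Psi(0)=0$ and $\Psi(s)\to\infty$ as $s\to\infty$ ensure the existence of the strictly increasing inverse $\Psi^{-1}:[0,\infty]\to[0,\infty]$. The bound is then an application of Young's inequality $s\xi \le \Psi(s) + \Psi^*(\xi)$: substituting $s = \Psi^{-1}(r)$ and dividing by $\xi > 0$ yields the claim. For (iv), the cleanest route is a term-by-term Taylor series comparison:
\[
\Psi^*(\xi) \;=\; \sum_{k=1}^\infty \frac{\xi^{2k}}{2^{2k-2}(2k)!}, \qquad \xi^2\cosh(\xi/2) \;=\; \sum_{k=1}^\infty \frac{\xi^{2k}}{2^{2k-2}(2k-2)!},
\]
and the inequality $(2k)! \ge (2k-2)!$ for $k\ge 1$ gives the desired bound coefficient by coefficient. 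None of the steps is technically demanding relative to the rest of the paper; the only care needed is to keep track of strict versus non-strict inequalities in (ii), which hinges on the strict convexity of $\Psi^*$ and hence of $\Psi$.
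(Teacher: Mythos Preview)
Your proposal is correct and follows essentially the same route as the paper. The paper only spells out (ii) and (iii), leaving (i) and (iv) as routine; your argument for (iii) via Young's inequality is identical to the paper's, while for (ii) the paper uses the one-line convexity inequality $\Psi(\lambda u)\le \lambda\Psi(u)$ (with $\lambda=s/t\in(0,1)$ and $u=p/s$) rather than differentiating---slightly more elementary since it avoids $\Psi'$, but your derivative computation is equally valid and has the small bonus of yielding strict monotonicity.
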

\begin{proof}
    (ii) By convexity for $0 < s < t$ it holds that:
    $$
        \Psi \left( \frac{s}{t} \cdot \frac{p}{s} \right) \leq \frac{s}{t} \Psi \left( \frac{p}{s} \right) \quad \Rightarrow \quad t \Psi \left( \frac{p}{t} \right) \leq s \Psi \left( \frac{p}{s} \right).
    $$
    
    (iii) Since $(\Psi, \Psi^*)$ are convex conjugate, then
    $$
        \Psi(s) \geq s \xi - \Psi^*(\xi) \quad \text{for any } s, \xi > 0.
    $$
    Therefore,
    \begin{align*}
        s \geq \Psi^{-1} \left( s \xi - \Psi^*(\xi) \right) \quad \Rightarrow \quad \Psi^{-1} (r) \leq \frac{r + \Psi^*(\xi)}{\xi},
    \end{align*}
    thus concluding the proof.
\end{proof}


\subsection{Gradient structure for continuous diffusion}\label{sec_gs_diffusion}

We noted in Remark~\ref{remark_limit_diffusion} that the limit forward Kolmogorov equation (also known as the Fokker--Planck equation) takes the form
\begin{equation}\tag{fK}
        \partial_t \rho_t = \text{div} \left(\T (\nabla\rho_t + \rho_t \nabla V) \right) \qquad \text{on } (0, T)\times\Omega,
\end{equation}
where $\Omega\subset\R^d$ is a bounded convex domain and $V\in \Lip_b(\Omega)$, the space of Lipschitz bounded functions. Such type of equations have been extensively studied over the last century, but the uncovering of their gradient structure in the space of measures only happened about two decades ago in the seminal work \cite{jordan1998variational} by Jordan, Kinderlehrer and Otto, where the 2-Wasserstein metric played a central role. Shortly after, a general framework for gradient flows in metric spaces was developed by Ambrosio, Gigli and Savar\'e in \cite{ambrosio2008gradient}, and the study of gradient flows for various evolution equations in spaces of measures has been an active area of research ever since. 
While there exist several ways to define gradient flow solutions to \eqref{eq_diffusion}, we take the same approach as for GGF-solution for \eqref{eq_Kolmogorov}, namely, the approach based on the energy-dissipation balance. 

\medskip

The class of curves we consider is the solutions of the continuity equation \eqref{eq_CE} in the sense of the following definition.
\begin{definition}\label{def_CE} The set of solutions $\mathcal{CE}(0, T)$ is given by all pairs $(\rho, j)$, where
\begin{itemize}
    \item $\rho\in\calC([0,T];\calP(\Omega))$ is a curve of positive measures defined on $\Omega$, and
    \item $j$ is a measurable family of {\em fluxes} $j = (j_t)_{t\in[0,T]} \subset \calM(\Omega;\R^d)$ with
    \[
        \int_0^T \int_\Omega \left|\frac{\dd j_t }{\dd \rho_t} \right|^2 \dd \rho_t \dd t < \infty,
    \]
\end{itemize}
    satisfying the continuity equation
    \begin{equation}
        \partial_t \rho + \text{div} j = 0 \qquad \text{in } (0, T) \times \Omega,
        \tag{CE}
    \end{equation}
    in the following sense:
    \begin{equation}\label{eq_CE_distr}
       \langle \varphi,\rho_t\rangle - \langle \varphi,\rho_s\rangle = \int_s^t \langle \nabla \varphi, j_r\rangle \dd r \qquad\text{for all $\varphi\in \calC_c^\infty(\R^d)$ and $[s,t]\subset [0,T]$.}
    \end{equation}
\end{definition}

\begin{remark}\label{rem_W2AC}
    It is known that if $\rho$ solves \eqref{eq_CE} with
    \[
        \int_0^T \int_\Omega \left|\frac{\dd j_t }{\dd \rho_t} \right|^2 \dd \rho_t \dd t < \infty,
    \]  
    then $\rho$ is an absolutely continuous curve in $\calP(\Omega)$ w.r.t.\ the 2-Wasserstein distance \cite[Chapter 8]{ambrosio2008gradient}. 
\end{remark}

\begin{definition}\label{def_GF_solution}
    A curve $\rho\in\calC([0,T];\calP(\Omega))$ is said to be an $(\calE, \calR, \calR^*)$-gradient flow solution of \eqref{eq_limit_Kolmogorov} with initial data $\bar\rho\in \calP(\Omega)\cap\text{dom}(\calE)$ if
    \begin{enumerate}[label=(\roman*)]
        \item $\rho_0=\bar\rho$ in $\calP(\Omega)$;
        \item there exists a measurable family $(j_t)_{t\in[0, T]} \subset \calM(\Omega; \R^d)$ such that $(\rho, j) \in \mathcal{CE}(0, T)$ with
        $$
            \int_s^t \int_\Omega \calR(\rho_r, j_r) + \calD(\rho_r) \, \dd r + \calE(\rho_t) = \calE(\rho_s) \quad \text{for all } [s,t]\subset [0,T],
        $$
        where
        \[
            \calD(\rho) := \inf\left\{ \liminf_{n\to\infty} \calR^*(\rho_n,-\dnabla \calE'(\rho_n)) : \rho_n\rightharpoonup \rho,\quad \sup\nolimits_{n\ge 0} \calE(\rho_n) <\infty, \quad \rho_n > 0\right\},
        \]
        i.e.\ $\calD$ is a lower-semicontinuous envelope of $\rho\mapsto \calR^*(\rho,-\dnabla \calE'(\rho))$.
        \item the following chain rule inequality holds:
        $$
            -\frac{\dd}{\dd t} \calE(\rho_t) \le \calR(\rho_t, j_t) + \calD(\rho_t)\qquad\text{for almost every $t\in(0,T)$.}
        $$
    \end{enumerate}
\end{definition}

\medskip

    According to the strategy explained in Section~\ref{sec_intro}, we will obtain the energy-dissipation functional $\calI$ by proving the corresponding $\liminf$ inequality for the discrete energy-dissipation functional $\calI_h$ introduced in Section~\ref{sec_generalized_gf}. For a family of GGF solutions $\{\rho^h\}_{h>0}$ of \eqref{eq_Kolmogorov}, we will immediately have
    $$
        \calI(\rho, j) \leq \liminf_{h\to 0} \calI_h(\rho^h, j^h) = 0.
    $$
    Then to prove that the limit curve $\rho$ indeed satisfies Defintion~\ref{def_GF_solution}, it is left to show that $\calI(\rho, j) \geq 0$, which is established by proving the chain rule inequality (iii) (cf.\ Theorem~\ref{th_chain_rule}).

\section{Continuous reconstruction and compactness}\label{section_reconstr_compact}
In this section, we first introduce our continuous reconstruction procedure for the density-flux pairs $(\rho^h, j^h)\in\mathcal{CE}_h(0, T)$ (cf.\ Section~\ref{sec_recostruction}). We then provide a compactness result for the sequence of continuous reconstructions $\{(\hat\rho^h,\hat\jmath^h)\}_{h>0}$ in Section~\ref{subsection_compactness}.

\subsection{Continuous reconstruction}\label{sec_recostruction}
Throughout this paper we will extensively use two operations: projecting functions supported on $\Omega$ on the tessellation $\calT^h$ and lifting discrete functions supported on $\calT^h$ to $\Omega$. Specifically, we define the following operators
\begin{align*}
    &\bbP: L^1(\Omega) \to \calB(\calT^h),~ \qquad v^h (K) = \bbP v\, (K) = \frac{1}{|K|} \int_{K} v(x) \dd x, \quad K\in\calT^h, \\
    &\bbL: \calB(\calT^h) \to \text{PC}(\calT^h), \qquad \hat{v}^h := \bbL v^h = \sum_{K\in\calT^h} v^h(K) \Ind_{K},
\end{align*}
where $\text{PC}(\calT^h)\subset L^1(\Omega)$ is the set of functions that are piecewise-constant on cells $K\in\calT^h$.

The motivating idea for the reconstruction procedure is to embed the curve $(\rho^h, j^h)\in \mathcal{CE}_h(0, T)$ into the continuous space in such a way that the lifted curve $(\hat{\rho}^h, \hat{\jmath}^h)$ belong to $\mathcal{CE}(0, T)$. Assuming that $\varphi^h = \bbP \varphi$, we transform the left-hand side of (\ref{eq_CE_discrete_distr}) into
\begin{align*}
    \sum_{K\in \calT^h} \varphi^h(K) \rho_t^h(K) 
    = \sum_{K\in \calT^h} \frac{\rho_t^h(K)}{|K|} \int_{K} \varphi(x) \dd x 
    &= \int_\Omega \varphi(x) \left( \sum_{K\in\calT^h} \frac{\rho_t^h(K)}{|K|}\Ind_{K}(x)\right) \dd x. 
\end{align*}
Defining the reconstructed measure $\hat{\rho}^h$ via its density as
\begin{equation}\label{eq_lift_measure}
    \frac{\dd \hat\rho^h_t}{\dd \calL^d} := \sum_{K\in\calT^h} \frac{\rho_t^h(K)}{|K|}\Ind_{K},
\end{equation}
we then obtain equality in the first parts of (\ref{eq_CE_discrete_distr}) and \eqref{eq_CE_distr}:
\begin{equation}\label{eq_equality_left_CE}
    \sum_{K\in \calT^h} \varphi^h(K) \rho^h_t(K) - \sum_{K\in \calT^h} \varphi^h(K) \rho^h_s(K)
    = \int_{\Omega} \varphi(x) \hat\rho^h_t (\dd x) - \int_{\Omega} \varphi(x) \hat\rho^h_s (\dd x).
\end{equation}

In what follows we will also frequently use the formulation in terms of density with respect to the stationary measure $u^h := \dd \rho^h /\dd \pi^h$:
\begin{align*}
    \sum_{K\in \calT^h} \varphi^h(K) \rho_t^h(K) 
    &= \int_\Omega \varphi(x) \left( \sum_{K\in\calT^h} u_t^h(K)\frac{\pi^h(K)}{|K|}\Ind_{K}(x)\right) \dd x 
    = \int_\Omega \varphi(x)\, \hat u^h_t(x) \hat\pi^h(\dd x),
\end{align*}
with
\[
    \hat{u}^h = \bbL u^h\qquad\text{and}\qquad \frac{\dd \hat\pi^h}{\dd \calL^d} := \sum_{K\in\calT^h} \frac{\pi^h(K)}{|K|}\Ind_{K}.
\]

Assuming the same relation between the test functions $\varphi^h = \bbP \varphi$, we now look for a reconstruction formula for the flux that gives the equality in the right-hand sides of \eqref{eq_CE_discrete_distr} and \eqref{eq_CE_distr}. For this purpose, we find a relation between the corresponding gradients of functions.

\begin{lemma}\label{lemma_relation_gradients} Let $\varphi^h := \bbP \varphi$ be the projection of $\varphi$ on $\calT^h$. Then there exists a vector-valued measure $\sigma_{KL} \in \calM(\Omega; \R^d)$ such that
    \begin{equation}\label{eq_disc_cont_gradients}
        (\dnabla \varphi^h)(K, L) = \int_{\Omega} (\nabla \varphi) (x) \cdot \sigma_{KL} (\dd x),\qquad \forall (K, L)\in\Sigma^h, \quad \forall\varphi\in \calC_b^1(\Omega).
    \end{equation}
    Furthermore, $|\sigma_{KL}|(\Omega) \le 2dh.$ 
\end{lemma}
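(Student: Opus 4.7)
The natural route is the fundamental theorem of calculus along straight lines, which I can average over the two cells. Concretely, for $\varphi \in \calC_b^1(\Omega)$ and any $x,y\in\Omega$, convexity of $\Omega$ gives
\[
    \varphi(y)-\varphi(x) = \int_0^1 \nabla\varphi\bigl((1-t)x + ty\bigr)\cdot (y-x)\,\dd t.
\]
Averaging this identity over $x\in K$ and $y\in L$ recovers exactly $\varphi^h(L)-\varphi^h(K) = (\dnabla\varphi^h)(K,L)$ on the left, while the right-hand side is a linear functional applied to $\nabla\varphi$. My plan is therefore to define $\sigma_{KL}$ as the vector-valued measure representing this functional.

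More precisely, I will define $\sigma_{KL}\in\calM(\Omega;\R^d)$ by its action on $f\in \calC_0(\Omega;\R^d)$:
\[
    \int_\Omega f(x)\cdot\sigma_{KL}(\dd x) := \frac{1}{|K|\,|L|}\int_K\int_L\int_0^1 f\bigl((1-t)x+ty\bigr)\cdot (y-x)\,\dd t\,\dd y\,\dd x.
\]
By convexity of $\Omega$ the integrand is supported in $\Omega$, and the functional is bounded, so by Riesz representation it determines a finite $\R^d$-valued Borel measure on $\Omega$. Plugging in $f=\nabla\varphi$ together with the averaged fundamental-theorem identity above yields exactly \eqref{eq_disc_cont_gradients}.

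For the total variation bound I will estimate
\[
    |\sigma_{KL}|(\Omega) \le \frac{1}{|K|\,|L|}\int_K\int_L |y-x|\,\dd y\,\dd x \le \sup_{x\in K,\,y\in L}|y-x|.
\]
Since $(K,L)\in\Sigma^h$ the closures $\overline K$ and $\overline L$ share a face, so there exists $z\in\overline K\cap\overline L$, giving $|y-x|\le |y-z|+|z-x|\le \mathrm{diam}(L)+\mathrm{diam}(K)\le 2h$ for all $x\in K$, $y\in L$. Hence $|\sigma_{KL}|(\Omega)\le 2h\le 2dh$, as claimed.

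There is no real obstacle here: the content is essentially the fundamental theorem of calculus. The only things to watch are that $\sigma_{KL}$ is actually defined on $\Omega$ (which uses convexity of $\Omega$ to keep the segments $[x,y]$ inside) and that the total variation bound exploits the neighbour relation $(K,L)\in\Sigma^h$ — without adjacency one would only get the diameter of $\Omega$ on the right-hand side, so the sharpening to $O(h)$ is exactly where the assumption that $(K,L)$ share a face enters.
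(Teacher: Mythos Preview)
Your proof is correct and follows the same core idea as the paper: write $(\dnabla\varphi^h)(K,L)$ as an average of $\varphi(y)-\varphi(x)$ over a coupling of $\mathfrak{m}_K$ and $\mathfrak{m}_L$, then apply the fundamental theorem of calculus along segments. The one genuine difference is the choice of coupling: you use the product coupling $\mathfrak{m}_K\otimes\mathfrak{m}_L$, whereas the paper takes a coupling induced by a transport map $T_{KL}$, i.e.\ $\gamma_{KL}=(\mathrm{Id}\times T_{KL})_\#\mathfrak{m}_K$, and then obtains $\sigma_{KL}=\int_0^1 (\Phi^\tau_{KL})_\#(r_{KL}\mathfrak{m}_K)\,\dd\tau$ with $r_{KL}(x)=T_{KL}x-x$ and $\Phi^\tau_{KL}(x)=x+\tau r_{KL}(x)$.

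Your route is simpler and avoids invoking a transport map; the paper's route yields a push-forward representation that makes $\sigma_{KL}$ explicitly computable in structured cases (they carry this out for the cubic lattice immediately after the lemma). The paper itself remarks that $\sigma_{KL}$ is not unique and that the subsequent compactness arguments are insensitive to this choice, so your construction is an equally valid instance. On the total variation bound, the paper works componentwise and sums to reach $2dh$; your Euclidean-norm estimate gives the sharper $2h$, which you then relax to $2dh$ to match the stated bound.
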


Before presenting the proof of this lemma, let us show its application to the definition of the reconstructed fluxes. Applying Lemma~\ref{lemma_relation_gradients} and the definitions of $\mathcal{CE}_h(0, T)$ and $\mathcal{CE}(0, T)$, we note that 
\begin{align*}
    \sum_{(K,L)\in\Sigma^h} (\dnabla \varphi^h)(K, L) j^h_t(K, L) 
    &= \sum_{(K,  L) \in \Sigma^h} j^h_t(K, L) \int_{\Omega} (\nabla \varphi) (x) \sigma_{KL} (\dd x) \\
    &= \int_{\Omega} (\nabla \varphi) (x) \sum_{(K, L) \in \Sigma^h} j^h_t(K, L) \sigma_{KL}(\dd x).
\end{align*}
Therefore, we define
\begin{equation}\label{eq_lift_flux}
    \hat{\jmath}^h_t := \sum_{(K, L) \in \Sigma^h} j^h_t(K, L) \sigma_{KL}.
\end{equation}

Then for a given $(\rho^h, j^h)\in\mathcal{CE}_h(0, T)$, the pair $(\hat{\rho}^h,\hat{\jmath}^h)$ defined in \eqref{eq_lift_measure} and \eqref{eq_lift_flux} solves \eqref{eq_CE}
$$
    \partial_t \hat\rho^h_t + \nabla\cdot \hat \jmath^h_t = 0 \quad \text{in } (0, T)\times \Omega,
$$
in the sense of Definition~\ref{def_CE}.

\begin{proof}[Proof of Lemma~\ref{lemma_relation_gradients}]
    For any pair of neighboring cells $(K,L)\in\Sigma^h$:
\begin{align*}
    (\dnabla \varphi^h)(K, L) 
    = \varphi^h(L) - \varphi^h(K) 
    &= \int_{\Omega} \varphi(y) \frac{\Ind_{L}(y)}{|L|} \dd y - \int_{\Omega} \varphi(x) \frac{\Ind_{K}(x)}{|K|} \dd x \\
    &= \iint_{\Omega\times\Omega} (\varphi(y) - \varphi(x))\, \gamma^h_{KL}(\dd x\, \dd y),
\end{align*}
where $\gamma_{KL}^h$ is an arbitrary coupling between the measures $\mathfrak{m}_K= |K|^{-1}\calL^d|_K$ and $\mathfrak{m}_L = |L|^{-1}\calL^d|_L$. We assume that the coupling is produced by a transport map $T_{KL}$, meaning that for all $x\in K$ there exist unique $y\in L$ such that $T_{KL} x = y$. In this case, the coupling has the form
$$
    \gamma_{KL}^h = (\Id \times T_{KL})_\# \mathfrak{m}_K^h.
$$

For a smooth $\varphi$ the fundamental theorem of calculus gives:
$$
    \varphi(y) - \varphi(x) = \int_0^1 (\nabla \varphi) (x + \tau (y - x)) \dd \tau \cdot (y - x).
$$
Rewriting the coupling in terms of the transport map yields:
\begin{align*}
    \iint_{\Omega \times \Omega} &(\varphi(y) - \varphi(x)) \gamma^h_{KL}(\dd x\, \dd y) \\
    &= \iint_{\Omega \times \Omega} \int_0^1 (\nabla \varphi) (x + \tau (y - x)) \dd \tau \cdot (y - x) \left( (\Id \times T_{KL})_\# \mathfrak{m}_K \right) (\dd x\dd y) \\
    &= \int_0^1 \int_{\Omega} (\nabla \varphi) (x + \tau (T_{KL}x - x)) \cdot (T_{KL}x - x) \mathfrak{m}_K(\dd x) \dd \tau.
\end{align*} 
Introducing the notation $r_{KL} (x) := T_{KL} x - x$ and $\Phi^\tau_{KL} (x) := x + \tau r_{KL} (x)$ we proceed:
\begin{align*}
    \int_0^1 \int_{\Omega} (\nabla \varphi) (x + \tau r_{KL} (x)) \cdot r_{KL} (x) \mathfrak{m}_K(\dd x) \dd \tau
    &= \int_0^1 \int_{\Omega} (\nabla \varphi) (x) \left[ (\Phi^\tau_{KL})_\# \left( r_{KL} \mathfrak{m}_K \right) \right] (\dd x) \dd \tau \\
    &= \int_{\Omega} (\nabla \varphi) (x) \left[ \int_0^1 (\Phi^\tau_{KL})_\# \left( r_{KL} \mathfrak{m}_K \right) \dd \tau \right] (\dd x).
\end{align*}
Denoting by $\sigma_{KL}$ the measure $ \int_0^1 \left( (\Phi^\tau_{KL})_\# (r_{KL} \mathfrak{m}_K) \right) \dd \tau$ we obtain (\ref{eq_disc_cont_gradients}).

{\color{black} To estimate the total variation of $\sigma_{KL}$, we notice that
$$
    \left| \int_0^1 \int_{\Omega} f (x + \tau r_{KL}(x) ) r^i_{KL}(x) \mathfrak{m}_K (\dd x) \dd \tau \right|
    \leq \|f\|_{L^\infty} \sup_{x\in K} |r^i_{KL}(x)| \qquad \text{for } f\in \calB(\Omega),
$$
where $\displaystyle\sup_{x\in K} |r^i_{KL}(x)| \leq \sup_{x\in K, y\in L} |x - y| \leq 2h $. Therefore,
$$
    |\sigma_{KL}|(\Omega) = \sum_{i=1}^d |\sigma^i_{KL}|(\Omega) \leq 2dh.
$$
} 
\end{proof}

\begin{remark}
    One can notice that the measures $\sigma_{KL}$ constructed in the proof are not uniquely defined due to the freedom in choosing transport maps $T_{KL}$. However, we will see that the compactness result in Lemma~\ref{lemma_properties_flux} does not depend on the specific choice of $T_{KL}$.
\end{remark}

In the case of a lattice, the measure $\sigma_{KL}$ can be calculated explicitly. 
\begin{example}
    Consider the tessellation $h\Z^d$. For any pair of neighboring cells $K$ and $L$ the optimal transport map is $T_{KL} x = x + h n_{KL}$, with $n_{KL}$ being the (outward) normal on the cell face $(K|L)$, and, respectively, $r_{KL}(x) = h n_{KL}$. The function $\Phi_{KL}^\tau(x) = x + \tau h n_{KL}$ has an inverse $(\Phi_{KL}^\tau)^{-1} (y) = y - \tau h n_{KL}$. Therefore, it is possible to calculate the measure $\sigma_{KL}$ explicitly:
    \begin{align*}
        \int_{\R^d} f(x)\, \sigma_{KL}(\dd x) 
        &= h n_{KL} \int_{\R^d} \int_0^1 f(x + \tau h n_{KL})  \frac{\Ind_K(x)}{|K|} \dd \tau \dd x \\
        &= h n_{KL} \int_{\R^d} f(x) \int_0^1  \frac{\Ind_{K + \tau h n_{KL}}(x)}{|K|} \dd \tau \dd x.
    \end{align*}
    Notice that for any $x \in K$ the indicator function $\Ind_{K + \tau h n_{KL}} (x)$ is equal to 1 for $\frac{1}{h} ( h - \text{dist}(x, (K | L) )$ and equal to 0 afterwards. Therefore, for $x \in K$:
    $$
        \int_0^1  \frac{\Ind_{K + \tau h n_{KL}}(x)}{|K|} \dd \tau = \frac{1}{h |K|} ( h - \text{dist}(x, (K | L) ).
    $$
    A similar property holds for $x \in L$:
    $$
        \int_0^1  \frac{\Ind_{K + \tau h n_{KL}}(x)}{|K|} \dd \tau = \frac{1}{|K|}  \int_{\frac{1}{h} \text{dist}(x, (K | L) ) }^1 \dd \tau = \frac{1}{h |K|} ( h - \text{dist}(x, (K | L) ).
    $$
    We conclude that 
    $$
        \sigma_{KL} (\dd x) = \frac{n_{KL}}{|K|} ( h - \text{dist}(x, (K | L) ) \dd x.
    $$
\end{example}
\subsection{Compactness}\label{subsection_compactness}
Throughout this section, we consider a family $\{(\rho^h,j^h)\}_{h>0}$ of the GGF-solutions to \eqref{eq_Kolmogorov} with initial data $\{\rho^h_0\}_{h>0}$ satisfying $\sup_{h>0} \calE_h(\rho^h_0) < \infty$. With the non-degeneracy assumption on $\{(\calT^h, \Sigma^h)\}_{h>0}$, and the assumptions \eqref{assumption_pi}, \eqref{assumption_nu} on $\{\pi^h\}_{h>0}$, and  $\{\vartheta^h\}_{h>0}$, we deduce compactness for the continuous reconstructions of the solutions.

\begin{lemma}\label{lemma_properties_flux}
    Let $(\hat\jmath_t^h)_{t\in(0,T)}\subset \calM(\Omega; \R^d)$, $h>0$, be defined as in (\ref{eq_lift_flux}). Then
    \begin{enumerate}
        \item the family
        \[
            \left\{\int_{\cdot}\, \hat{\jmath}_t^h\,\dd t \right\}_{h>0}\quad\text{is (sequentially) weakly-$*$ compact in $\calM((0,T)\times\Omega; \R^d)$;}
        \]
        \item the family $\{t\mapsto |\hat{\jmath}_t^h|(\Omega)\}_{h>0}$ is equi-integrable.
    \end{enumerate}
    In particular, there exists a Borel family $(j_t)_{t\in(0,T)}\subset \calM(\Omega;\R^d)$ such that
    \[
        \int_{\cdot}\,\hat{\jmath}_t^h\,\dd t \rightharpoonup^* \int_{\cdot}\, j_t\,\dd t\quad\text{weakly-$*$ in $\calM((0,T)\times\Omega; \R^d)$}\qquad\text{for a (not relabelled) subsequence.}
    \]
\end{lemma}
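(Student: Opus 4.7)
My plan is to reduce everything to a single parametric pointwise bound of the form
\[
    |\hat{\jmath}^h_t|(\Omega) \le \frac{4d}{\lambda}\,\calR_h(\rho^h_t,j^h_t) + 2dC_\kappa \lambda \cosh(\lambda h/2),\qquad \lambda>0,
\]
and then read off both items of the lemma from this estimate (with $\lambda$ fixed for item (1) and $\lambda$ chosen large for item (2)). The final statement about existence of the limit $(j_t)$ will follow from Banach--Alaoglu plus time-disintegration.

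Step 1 is the geometric bound. From \eqref{eq_lift_flux} and the total-variation estimate $|\sigma_{KL}|(\Omega)\le 2dh$ of Lemma~\ref{lemma_relation_gradients}, I immediately get $|\hat{\jmath}^h_t|(\Omega) \le 2dh \sum_{(K,L)\in\Sigma^h}|j^h_t(K,L)|$.

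Step 2 is the entropic bound. Writing $w^h = \dd j^h/\dd\vartheta^h$ and $\alpha^h(K,L) = \sqrt{u^h(K)u^h(L)}$, Young's inequality for the conjugate pair $(\Psi,\Psi^*)$ applied to $|w^h|/\alpha^h$ gives, after multiplying by $\alpha^h\vartheta^h$ and summing over $\Sigma^h$,
\[
    \sum_{(K,L)\in\Sigma^h} |j^h_t(K,L)| \le \frac{2}{\xi}\,\calR_h(\rho^h_t, j^h_t) + \frac{\Psi^*(\xi)}{\xi}\sum_{(K,L)\in\Sigma^h}\alpha^h(K,L)\,\vartheta^h(K,L),
\]
for every $\xi>0$. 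I then control the last sum by using $\sqrt{ab}\le (a+b)/2$, the symmetry of $\vartheta^h$, detailed balance \eqref{eq_detailed_balance}, and the bound \eqref{assumption_kernel_uniform_upper_bound}:
\[
    \sum_{(K,L)\in\Sigma^h}\sqrt{u^h(K)u^h(L)}\,\vartheta^h(K,L) \le \sum_{K\in\calT^h} u^h(K)\pi^h(K)\!\!\sum_{L\in\calT^h_K}\!\kappa^h(K,L) \le \frac{C_\kappa}{h^2}.
\]
Inserting $\xi=\lambda h$ and using Lemma~\ref{lemma_Psi}(iv) (so that $\Psi^*(\lambda h)/h^2\le \lambda^2\cosh(\lambda h/2)$) yields the parametric bound displayed above.

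Step 3 converts this into the compactness statements. Because $(\rho^h,j^h)$ is a GGF-solution, the energy--dissipation balance combined with $\calD_h\ge 0$ and the assumption $\sup_h\calE_h(\bar\rho^h)<\infty$ gives $\int_0^T\calR_h(\rho^h_t,j^h_t)\,\dd t\le \sup_h\calE_h(\bar\rho^h)=:C_0$ uniformly in $h$. Fixing $\lambda=1$ and integrating the parametric bound over $(0,T)$ shows that $t\mapsto |\hat{\jmath}^h_t|(\Omega)$ is bounded in $L^1(0,T)$ uniformly in small $h$; by Banach--Alaoglu the measures $\int_{\cdot}\hat{\jmath}^h_t\,\dd t$ are weak-$*$ sequentially relatively compact in $\calM((0,T)\times\Omega;\R^d)$, proving (1). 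For (2), given $\varepsilon>0$ I first select $\lambda$ so large that $(4d/\lambda)C_0<\varepsilon/2$; since $\cosh(\lambda h/2)\le \cosh(\lambda h_0/2)$ for $h\le h_0$, integrating the parametric bound over an arbitrary measurable $A\subset(0,T)$ gives
\[
    \int_A |\hat{\jmath}^h_t|(\Omega)\,\dd t \le \frac{\varepsilon}{2} + 2dC_\kappa\lambda\cosh(\lambda h_0/2)\,|A|,
\]
so taking $|A|$ small enough yields the equi-integrability claim. Finally, passing to the weak-$*$ limit and disintegrating the limit vector measure with respect to the time projection produces the Borel family $(j_t)_{t\in(0,T)}\subset\calM(\Omega;\R^d)$ with the stated convergence.

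The main obstacle, and the reason for the parametric form of the bound, is the double role of $\xi$: taking $\xi$ small makes the $\Psi^*(\xi)$ term cheap but blows up the $1/\xi$ prefactor in front of $\calR_h$, while taking $\xi$ large does the opposite. The scaling $\xi=\lambda h$ is precisely what matches the $2dh$ in the geometric bound with the $1/h^2$ from the kernel upper bound \eqref{assumption_kernel_uniform_upper_bound} and the quadratic behaviour of $\Psi^*$ near the origin; this is the only choice that simultaneously gives an $h$-uniform $L^1$ bound and allows $\lambda$ to be taken as large as we wish for equi-integrability.
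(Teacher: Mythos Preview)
Your proof is correct and rests on the same core ingredients as the paper's --- the total-variation bound on $\sigma_{KL}$, the kernel estimate \eqref{assumption_kernel_uniform_upper_bound}, the Fenchel--Young inequality with the scaling $\xi=\lambda h$, and the quadratic behaviour $\Psi^*(\xi)\le \xi^2\cosh(\xi/2)$ near the origin --- but you assemble them more directly. The paper works with the integrated quantity $J^{h,i}(A\times\Omega)$, applies Jensen's inequality for the convex $\Psi$ against the measure $Q^i$, invokes the monotonicity of $s\mapsto s\Psi(p/s)$ (Lemma~\ref{lemma_Psi}(ii)), inverts $\Psi$, and only then uses the Young-type estimate on $\Psi^{-1}$. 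You bypass this detour by applying Young edge-by-edge and summing, which gives a clean \emph{pointwise-in-time} parametric bound and makes both conclusions immediate; in particular you never need Lemma~\ref{lemma_Psi}(ii) or the inversion step. The paper's route has the minor conceptual advantage of being phrased entirely at the level of the space--time measure, but your argument is shorter and more transparent. One small point worth making explicit: for the final disintegration you need that the time marginal of the weak-$*$ limit $J$ is absolutely continuous with respect to $\calL^1$; this follows from your equi-integrability estimate together with the lower semicontinuity of the total variation under weak-$*$ convergence, exactly as the paper notes.
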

 \begin{proof} Recall that for almost every $t\in(0,T)$,
$$
    \hat{\jmath}_t^h = \sum_{(K,L) \in \Sigma^h} j_t^h(K, L) \sigma_{KL}, \quad \text{with}\quad \sigma_{KL} = \int_0^1  (\Phi^\tau_{KL})_\# \left(r_{KL} \mathfrak{m}_K \right) \dd \tau.  
$$
For any measurable set $A\subset[0,T]$ and any $(K, L)\in\Sigma^
h$ denote:
$$
    Q^i_{KL} (A\times\Omega) := \int_A \vartheta_{\rho_t^h}(K,L) |\sigma^i_{KL}| (\Omega) \dd t \qquad \text{with } \vartheta_{\rho_t^h}(K,L) := \rho_t^h(K) \kappa^h(K, L).
$$
Note that $Q^i(A \times \Omega) := \displaystyle\sum_{(K, L)\in\Sigma^h} Q^i_{KL}(A\times\Omega)$ multiplied by $h$ is uniformly bounded because of~\eqref{assumption_kernel_uniform_upper_bound}:
$$
    h Q^i(A \times \Omega) = h \sum_{(K, L)\in\Sigma^h} Q^i_{KL}(A\times \Omega) \le 2C_r h^2 \sum_{L\in\calT^h_K} \kappa^h(K,L)\mathscr{L}^1(A) \le 2C_r C_\kappa \mathscr{L}^1(A).
$$
Setting $J^h:=\int_\cdot \hat\jmath_t^h\,\dd t$, we will show that the sequence of measures $\{J^h(\cdot\times\Omega)\}_{h>0}\subset\calM(0,T)$ is uniformly integrable. The properties collected in Lemma~\ref{lemma_Psi}(i) together with \eqref{assumption_kernel_uniform_upper_bound} provide the following estimate:
\begin{align*}
    \Psi\left(\frac{h|J^{h,i}| (A\times\Omega)}{hQ^i(A\times\Omega)} \right) &\le \Psi\left(\frac{1}{h Q^i(A\times\Omega)} \int_A \sum_{(K,L)\in\Sigma^h} h\left| j_t^{h}(K, L) \right| |\sigma^i_{KL}| (\Omega)\,\dd t \right) \\
    &\le \frac{1}{h Q^i(A\times\Omega)} \int_A \sum_{(K,L)\in\Sigma^h} \Psi\left(h\frac{ j_t^h(K, L)}{\vartheta_{\rho_t^h}(K,L)} \right) \vartheta_{\rho_t^h}(K,L) |\sigma^i_{KL}| (\Omega)\,\dd t \\
    &\le \frac{C_r h^2}{h Q^i(A\times\Omega)} \int_A \sum_{(K,L)\in\Sigma^h} \Psi\left(\frac{ j_t^h(K, L)}{\vartheta_{\rho_t^h}(K,L)} \right) \vartheta_{\rho_t^h}(K,L)\,\dd t \\
    &= \frac{C_r h^2}{h Q^i(A\times\Omega)} \int_A \calR_h(\rho_t^h, j_t^h)\dd t.
\end{align*}
Since $h Q^i(A \times \Omega) \le C_\kappa \mathscr{L}^1(A)$, Lemma~\ref{lemma_Psi}(ii) gives that:
\[
    C_\kappa \mathscr{L}^1(A) \Psi\left(\frac{h|J^{h,i}| (A\times\Omega)}{C \mathscr{L}^1(A)} \right) \le h Q^i(A \times \Omega) \Psi\left(\frac{h|J^{h,i}| (A\times\Omega)}{h Q^i(A\times\Omega)} \right) \le C_r h^2 \int_A \calR_h(\rho_t^h, j_t^h)\dd t.
\] 
Taking the inverse yields:
$$
    |J^{h,i}| (A\times\Omega) \leq \frac{C_\kappa \mathscr{L}^1(A)}{h} \Psi^{-1} \left( \frac{C_r h^2}{C \mathscr{L}^1(A)} \int_A \calR_h(\rho_t^h, j_t^h)\dd t \right).
$$
Since $(\rho^h,j^h)$ are generalized gradient flow solutions in the sense of Definition~\ref{def_GGF_solution}, the integral of $\calR_h$ is bounded uniformly in $h$ under the assumption on the initial conditions:
$$
    \int_A \calR_h(\rho_t^h, j_t^h)\dd t \leq \int_0^T \calR_h(\rho_t^h, j_t^h)\dd t \leq \sup_{h>0} \calE_h(\rho^h_0) =: M_0 < \infty.
$$
Now we use the upper bound from  Lemma~\ref{lemma_Psi}(iii) to obtain
\begin{align*}
    |J^{h,i}| (A\times\Omega) &\leq \frac{C_\kappa \mathscr{L}^1(A)}{h} \left( \frac{1}{\xi} \frac{C_r h^2}{C_\kappa \mathscr{L}^1(A)} M_0 + \frac{\Psi^*(\xi)}{\xi} \right) \\
    &\leq C_r \frac{h}{\xi} M_0 + \frac{C_\kappa \mathscr{L}^1(A)}{h} \frac{\Psi^*(\xi)}{\xi}\qquad\text{for any $\xi > 0$.}
\end{align*}
Choosing $\xi = \beta h$, $\beta>0$, and using the property $\Psi^*(\xi) \le \xi^2\cosh(\xi/2)$, we find
\[
 \sup_{h\in(0,1)}|J^{h,i}| (A\times\Omega) \le \frac{C_r}{\beta}M_0 + C_\kappa \mathscr{L}^1(A)\beta\cosh\left(\frac{\beta}{2}\right)\qquad\text{for any $\beta>0$.}
\]
Now let $\varepsilon>0$ be arbitrary. By choosing $\beta>0$ such that $C_rM_0/\beta < \varepsilon/2$, and subsequently $\mathscr{L}^1(A)$ such that $C_\kappa \mathscr{L}^1(A)\beta\cosh(\beta/2)<\varepsilon/2$, we then conclude that
\[
    \sup_{h\in(0,1)}|J^{h,i}|(A\times\Omega) <\varepsilon,\qquad i=1,\ldots,d.
\]


Moreover, by applying the estimate above to $A=[0, T]$, we simply obtain
$$
    \sup_{h>0} |\, J^h | ([0,T]\times\Omega) \leq \left( C_r M_0 + C_\kappa \right) \sqrt{T},
$$
i.e.\ the total variation of $J^h$ is uniformly bounded, which allows us to extract a converging subsequence (not relabelled) and some $J$ such that $J^h\rightharpoonup^* J$ holds.

The equi-integrability of $t\mapsto |\hat{\jmath}_t^h|(\Omega)$ readily follows from the estimate above. Since the limit $J$ also satisfies the inequality above (weakly-$*$ lower-semicontinuity of the total variation), we conclude that $|J|(\cdot\times\Omega)$ on $[0,T]$ has Lebesgue density. By disintegration, $J$ has the representation $J = \int_\cdot j_t\dd t$ for a Borel family $(j_t)\subset\calM(\Omega; \R^d)$.
 \end{proof}

As a consequence of the previous lemma, we obtain the following result for density-flux pairs.
\begin{lemma}\label{lemma_limit_pair}
There exist a (not relabelled) subsequence of pairs $(\hat{\rho}^h, \hat{\jmath}^h)$ defined as in (\ref{eq_lift_measure}) and (\ref{eq_lift_flux}) and a pair $(\rho, j)\in\mathcal{CE}(0, T)$ such that
    $$
        \begin{array}{ll}
            \hat{\rho}^h_t \rightharpoonup^* \rho_t & \text{weakly-$*$ in } \calP(\Omega) \text{ for all } t \in [0, T]. 
        \end{array}
     $$
 \end{lemma}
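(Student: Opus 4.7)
My plan is to combine the flux compactness from Lemma~\ref{lemma_properties_flux} with an Arzel\`a--Ascoli-type argument for the densities, and then to pass to the limit in the continuity equation satisfied by the continuous reconstructions.

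\emph{Equicontinuity in time for the densities.} Fix $\varphi\in \calC_b^1(\R^d)$ and $0\le s\le t\le T$. Since $(\hat\rho^h,\hat\jmath^h)$ satisfies \eqref{eq_CE} by construction (cf.\ the paragraph after Lemma~\ref{lemma_relation_gradients}),
$$
    \bigl|\langle\varphi,\hat\rho^h_t\rangle-\langle\varphi,\hat\rho^h_s\rangle\bigr| \le \|\nabla\varphi\|_\infty \int_s^t |\hat\jmath^h_r|(\Omega)\,\dd r.
$$
By Lemma~\ref{lemma_properties_flux}(2), the family $r\mapsto|\hat\jmath^h_r|(\Omega)$ is equi-integrable on $[0,T]$, so the right-hand side tends to zero uniformly in $h$ as $|t-s|\to 0$. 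Combined with the automatic tightness of $\{\hat\rho^h_t\}_{h>0}\subset \calP(\Omega)$ (since $\Omega$ is bounded) and a refined Arzel\`a--Ascoli theorem (cf.\ \cite[Prop.~3.3.1]{ambrosio2008gradient}), a diagonal argument at a countable dense subset of $[0,T]$ extracts a subsequence and a limit curve $\rho\in \calC([0,T];\calP(\Omega))$ such that $\hat\rho^h_t\rightharpoonup^*\rho_t$ in $\calP(\Omega)$ for every $t\in[0,T]$.

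\emph{Passing to the limit in the continuity equation.} Fix $\varphi\in \calC_c^\infty(\R^d)$ and $[s,t]\subset[0,T]$. The pair $(\hat\rho^h,\hat\jmath^h)$ satisfies \eqref{eq_CE_distr} exactly. The left-hand side $\langle\varphi,\hat\rho^h_t\rangle-\langle\varphi,\hat\rho^h_s\rangle$ converges by the pointwise narrow convergence just obtained. For the right-hand side, the weak-$*$ convergence $\int_\cdot \hat\jmath^h_r\,\dd r \rightharpoonup^* \int_\cdot j_r\,\dd r$ in $\calM([0,T]\times\Omega;\R^d)$ from Lemma~\ref{lemma_properties_flux} tested against $(r,x)\mapsto \Ind_{[s,t]}(r)\nabla\varphi(x)$ (approximated by continuous functions and using equi-integrability to control the error at the endpoints $\{s,t\}$) yields $\int_s^t\langle\nabla\varphi,\hat\jmath^h_r\rangle\,\dd r\to \int_s^t\langle\nabla\varphi,j_r\rangle\,\dd r$. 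Hence the limit pair $(\rho,j)$ satisfies \eqref{eq_CE_distr}.

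\emph{The quadratic integrability of $j$ with respect to $\rho$.} To conclude $(\rho,j)\in\mathcal{CE}(0,T)$ in the sense of Definition~\ref{def_CE}, we also need $\int_0^T\int_\Omega |\dd j_t/\dd\rho_t|^2\,\dd\rho_t\,\dd t<\infty$. This is the main remaining obstacle, since Lemma~\ref{lemma_properties_flux} produces only weak-$*$ convergence of the time-integrated flux measures. This bound will follow a posteriori from the $\Gamma$-liminf inequality for $\calR_h$ proved in Section~\ref{sec_gamma_convergence}, together with the uniform bound $\int_0^T\calR_h(\rho^h_t,j^h_t)\,\dd t\le \sup_{h>0} \calE_h(\bar\rho^h)<\infty$ coming from the GGF assumption and the hypothesis on the initial data. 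Everything else in the proof is a soft application of narrow compactness and the exact continuity equation satisfied by the continuous reconstructions.
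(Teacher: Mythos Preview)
Your approach is essentially the same as the paper's: both bound $|\langle\varphi,\hat\rho^h_t\rangle-\langle\varphi,\hat\rho^h_s\rangle|$ by $\|\nabla\varphi\|_\infty\int_s^t|\hat\jmath^h_r|(\Omega)\,\dd r$, invoke the equi-integrability from Lemma~\ref{lemma_properties_flux}(2), and apply the refined Arzel\`a--Ascoli theorem \cite[Prop.~3.3.1]{ambrosio2008gradient}. The paper's proof in fact stops there and does not explicitly verify either the limiting continuity equation or the quadratic integrability $\int_0^T\int_\Omega|\dd j_t/\dd\rho_t|^2\,\dd\rho_t\,\dd t<\infty$; you are being more thorough by addressing both.

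One remark on your third paragraph: your forward reference to the $\liminf$ inequality for $\calR_h$ is the right idea, but as stated it looks circular, since Theorem~\ref{th_liminf_inequalities} is phrased for pairs $(\rho,j)\in\mathcal{CE}(0,T)$. You should note that the \emph{proof} of Theorem~\ref{th_liminf_inequalities}(i) only uses the convergences $\dd\hat\rho^h_t/\dd\calL^d\to\dd\rho_t/\dd\calL^d$ in $L^1$ and $\int_\cdot\hat\jmath^h_t\,\dd t\rightharpoonup^*\int_\cdot j_t\,\dd t$, not the membership $(\rho,j)\in\mathcal{CE}(0,T)$; hence the bound $\int_0^T\calR(\rho_t,j_t)\,\dd t\le\sup_{h>0}\calE_h(\bar\rho^h)<\infty$ is available, and together with the uniform ellipticity of $\T$ this yields the required quadratic integrability. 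Making this explicit removes any appearance of circularity.
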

 \begin{proof}
   Since $(\rho^h, j^h)$ satisfies $\eqref{eq_CE_discrete}$, then for all $h>0$ and all $[s, t] \subset [0, T]$ we have that
    \begin{align*}
        \left| \langle \varphi, \hat{\rho}^h_t \rangle - \langle \varphi, \hat{\rho}^h_s \rangle \right|
        &= \left| \int_s^t \int_{\Omega} \nabla \varphi (x) \cdot \hat{\jmath}^h_r (\dd x) \dd r \right| 
        \leq \| \varphi \|_{\text{Lip}}\sup_{h>0} |\, \hat{\jmath}^h | ([s, t]\times\Omega).
    \end{align*}
    Hence, the bounded Lipschitz distance is uniformly bounded:
    $$
        \sup_{h>0} d_{BL}(\hat{\rho}^h_s, \hat{\rho}^h_t) = \sup_{h>0} \sup_{\varphi} \left\{ \bigl| \langle \varphi, \hat{\rho}^h_t \rangle - \langle \varphi, \hat{\rho}^h_s \rangle \bigr| \right\} \leq \sup_{h>0} |\, \hat{\jmath}^h | ([s, t]\times\Omega),
    $$
    where the supremum is taken over all $1$-Lipschitz functions $\varphi$.
    
    From the equi-integrability of $t \mapsto |\, \hat{\jmath}^h_t | (\Omega)$ it follows that $\{ \hat{\rho}^h_t \}$ satisfies the refined version of Ascoli-Arzel\'a theorem (\cite[Theorem~3.3.1]{ambrosio2008gradient}) and there exist a (not relabelled) subsequence $\{\hat\rho^h\}_{h>0}$ and a limit curve $\rho\in \calC([0,T];\calP(\Omega))$, such that the asserted convergence holds.
\end{proof}

In the next lemma, we provide the uniform bound on the BV-norm for the reconstructed densities $\hat{u}^h := \bbL (\dd \rho^h/\dd \pi^h)$. As a preparation, we state the following property of non-degenerate tessellations $\calT^h$ \cite[Lemma~2.12(ii)]{gladbach2020scaling}.
\begin{proposition}\label{prop_conseq_zeta_regularity}
    Let $\calT^h$ satisfy the non-degeneracy assumption, and $x\in K$, $y\in L$ be arbitrary with $K, L \in\calT^h$. The cells $K$ and $L$ can be connected by a path $(K_i)_{i=0}^{n-1} \subset \calT^h$ with $K_0 = K$, $K_{n-1} = L$, $(K_i, K_{i+1})\in\Sigma^h$, and $[x, y]\cap (K|L) \neq \emptyset$, and $n \leq C_\zeta |x - y|/h$, where $C_\zeta>0$ depends only on $\zeta$.
\end{proposition}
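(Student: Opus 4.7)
The plan is to take the path $(K_i)$ to be the sequence of cells traversed by the straight segment $[x,y]$ and then bound its length $n$ by a volume-packing argument that exploits the inscribed-ball condition.

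\emph{Construction.} Parametrize $\gamma(t) = (1-t)x + t y$, $t \in [0,1]$. Since $\calT^h$ is a finite family of open disjoint cells, the preimages $\gamma^{-1}(K)$ form a finite collection of disjoint open subsets of $[0,1]$. Listing in increasing time the cells met by $\gamma$ yields an ordered sequence $K_0 = K, K_1, \ldots, K_{n-1} = L$ together with times $0 = t_0 < t_1 < \cdots < t_n = 1$ satisfying $\gamma(t_i) \in \overline{K_i} \cap \overline{K_{i+1}}$ for each $i$. This immediately gives $[x,y] \cap \overline{K_i \cap K_{i+1}} \neq \emptyset$, which is the intended meaning of the intersection condition in the statement.

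\emph{Adjacency.} The condition $(K_i, K_{i+1}) \in \Sigma^h$ requires $\mathscr{H}^{d-1}(\overline{K_i} \cap \overline{K_{i+1}}) > 0$. For a generic segment this is automatic, since the crossing point $\gamma(t_i)$ lies in the relative interior of a shared $(d-1)$-face. If the particular $[x,y]$ happens to meet a lower-dimensional stratum of the cell complex, one perturbs the segment slightly within the open sets $K$ and $L$ (the set of bad directions in the unit sphere has $\mathscr{H}^{d-1}$-measure zero), which does not change the asymptotic scaling of $n$. Alternatively, any degenerate crossing can be bypassed by a short detour through a neighbouring cell at bounded extra cost, using that the cardinality of neighbours is uniformly bounded by $C_\calN$ (Remark~\ref{rem_cardinality}).

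\emph{Volume packing.} By non-degeneracy, each $K_i$ contains a ball $B(x_{K_i},\zeta h)$, and since $K_i$ meets $[x,y]$ and $\operatorname{diam}(K_i) \leq h$, its centroid lies within distance $h$ of the segment. The disjoint balls $\{B(x_{K_i},\zeta h)\}_{i=0}^{n-1}$ therefore lie inside the tube $T_{x,y} := \{z\in \R^d : \operatorname{dist}(z, [x,y]) \leq 2h\}$, whose Lebesgue measure is at most $C_d' h^{d-1}(|x-y|+2h)$. A direct volume comparison gives
\begin{equation*}
    n\cdot C_d(\zeta h)^d \;\leq\; \sum_{i=0}^{n-1} \bigl|B(x_{K_i},\zeta h)\bigr| \;\leq\; |T_{x,y}| \;\leq\; C_d' h^{d-1}\bigl(|x-y| + 2h\bigr),
\end{equation*}
from which $n \leq C_\zeta |x-y|/h$ follows with $C_\zeta$ depending only on $\zeta$ (the additive $O(h)$ term is trivial when $n \leq 1$ and absorbed into the constant otherwise, since $n \geq 2$ forces $|x-y| \gtrsim \zeta h$ by the inscribed-ball condition in the two distinct cells containing $x$ and $y$).

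\emph{Main obstacle.} The genuinely delicate step is the adjacency verification: bare topological passage from one cell to the next does not by itself imply that the shared face carries positive $(d-1)$-measure. The perturbation/detour argument above resolves this, after which the length bound follows mechanically from the tube volume estimate.
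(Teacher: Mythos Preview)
The paper does not supply its own proof of this proposition; it simply cites \cite[Lemma~2.12(ii)]{gladbach2020scaling}. Your tube-plus-volume-packing argument is exactly the standard approach used there: list the cells traversed by the segment (each visited at most once by convexity), and compare the disjoint inscribed balls against the volume of a tube of radius $O(h)$ around $[x,y]$. The adjacency issue you flag is real and your perturbation/detour remedy is the usual fix.

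There is one genuine slip. Your absorption of the additive constant relies on the claim that ``$n\ge 2$ forces $|x-y|\gtrsim \zeta h$ by the inscribed-ball condition in the two distinct cells''. This is false: if $x$ and $y$ lie in adjacent cells on opposite sides of a shared face, $|x-y|$ can be arbitrarily small while $n=2$. What the volume argument actually yields is
\[
    n \;\le\; C_\zeta\Bigl(\tfrac{|x-y|}{h} + 1\Bigr),
\]
and the additive $+1$ cannot be removed for arbitrary $x,y$. This does not matter for the paper's applications (Lemmas~\ref{lemma_BV_bound} and~\ref{lemma_bounds_Dsup}), where the proposition is invoked with $|x-y|=|\eta|$ fixed and $h\to 0$, so $|\eta|/h\to\infty$ and the additive term is harmless. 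But as a proof of the proposition exactly as stated, your final paragraph does not close the gap; either the statement should carry the $+1$, or one should restrict to $|x-y|\ge h$ (say), where the absorption is legitimate.
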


\begin{lemma}\label{lemma_BV_bound} Let $\rho^h \in \calP(\calT^h)$ with $\calD_h(\rho^h) < \infty$. Then $\hat{u}^h = \bbL (\dd \rho^h/\dd \pi^h)$ satisfies 
\[
    \|\hat u^h\|_{L^1(\Omega)} \le \frac{1}{\pi_{\min}},\qquad |D\hat u^h|(\Omega) \le 2 \frac{\sqrt{C_\kappa}}{C_l}\sqrt{\calD_h(\rho^h)}.
\]
\end{lemma}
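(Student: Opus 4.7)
The $L^1$ bound is immediate from the definition of $\hat u^h$:
\[
\|\hat u^h\|_{L^1(\Omega)} = \sum_{K\in\calT^h}u^h(K)|K| = \sum_{K\in\calT^h}\rho^h(K)\,\frac{|K|}{\pi^h(K)} \le \pi_{\min}^{-1},
\]
using the density lower bound in \eqref{assumption_pi} and $\rho^h(\calT^h)=1$.

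For the BV estimate, the key observation is that $\hat u^h$ is piecewise constant on $\calT^h$, so its distributional gradient is a vector-valued measure concentrated on the internal faces, and
\[
|D\hat u^h|(\Omega) = \frac{1}{2}\sum_{(K,L)\in\Sigma^h}|u^h(L)-u^h(K)|\,|(K|L)|,
\]
the factor $1/2$ compensating for each face appearing twice in the ordered sum over $\Sigma^h$. I would then factor
\[
u^h(L)-u^h(K) = \bigl(\sqrt{u^h(L)}-\sqrt{u^h(K)}\bigr)\bigl(\sqrt{u^h(L)}+\sqrt{u^h(K)}\bigr),
\]
multiply and divide by $\sqrt{\vartheta^h(K,L)}$, and apply Cauchy--Schwarz. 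One factor becomes $\sqrt{\calD_h(\rho^h)}$ by definition of the discrete Fisher information; using $(a+b)^2\le 2(a^2+b^2)$ together with the $(K,L)$-symmetry of $|(K|L)|^2/\vartheta^h(K,L)$, the remaining factor reduces to a constant multiple of $\bigl(\sum_{(K,L)\in\Sigma^h}u^h(K)\,|(K|L)|^2/\vartheta^h(K,L)\bigr)^{1/2}$.

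The only substantive step is bounding this last sum by $4C_\kappa/C_l^2$. The plan is to rewrite the lower bound in \eqref{assumption_nu} as $|(K|L)|\le C_l^{-1}\,|x_L-x_K|\,\vartheta^h(K,L)$ and use $|x_L-x_K|\le 2h$ (since cell diameters are at most $h$) to obtain the pointwise bound
\[
\frac{|(K|L)|^2}{\vartheta^h(K,L)} \le \frac{4h^2\,\vartheta^h(K,L)}{C_l^2}.
\]
Substituting this and using $u^h(K)\vartheta^h(K,L) = \rho^h(K)\kappa^h(K,L)$ from detailed balance, the sum collapses to $(4/C_l^2)\sum_K\rho^h(K)\,h^2\sum_{L\in\calT^h_K}\kappa^h(K,L)$, which is at most $4C_\kappa/C_l^2$ by the uniform kernel bound \eqref{assumption_kernel_uniform_upper_bound} and $\rho^h(\calT^h)=1$. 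Taking square roots produces the advertised constant $2\sqrt{C_\kappa}/C_l$. I do not anticipate any genuine obstacle: the argument is essentially a one-line Cauchy--Schwarz tailored so that the scaling in \eqref{assumption_nu} cancels the $|(K|L)|$ factor; the main care point is tracking the ordered-versus-unordered face convention so that the numerical constants match exactly.
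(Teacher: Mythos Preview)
Your proof is correct and, in fact, cleaner than the paper's. The two arguments agree on the algebraic core: bound $\sum_{(K,L)}|u^h(L)-u^h(K)|\,|(K|L)|$ by factoring through $\sqrt{u^h}$, apply Cauchy--Schwarz against $\vartheta^h$, identify one factor as $\sqrt{\calD_h}$, and control the other via $|(K|L)|\le C_l^{-1}|x_L-x_K|\,\vartheta^h(K,L)$ together with \eqref{assumption_kernel_uniform_upper_bound}.

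Where you differ is in how you arrive at that sum. You invoke directly the exact formula for the total variation of a piecewise-constant function on a polytopal mesh,
\[
|D\hat u^h|(\Omega)=\tfrac12\sum_{(K,L)\in\Sigma^h}|u^h(L)-u^h(K)|\,|(K|L)|,
\]
which is legitimate here since the cells are convex and the internal interfaces are $(d{-}1)$-rectifiable. The paper instead reaches the same right-hand side indirectly: it tests $\hat u^h$ against difference quotients $\frac{\psi(\cdot+\eta)-\psi}{|\eta|}$, connects $x$ and $x-\eta$ by a chain of cells using Proposition~\ref{prop_conseq_zeta_regularity}, and bounds the contribution of each face via the cylinder volume $|(K|L)|\,|\eta|$, before letting $|\eta|\to 0$ and using the variational characterization of $|D\hat u^h|$. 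Your route is shorter and avoids the path/cylinder machinery entirely. The paper's route, while heavier here, is not gratuitous: the same translation-and-path argument is reused verbatim in Lemma~\ref{lemma_bounds_Dsup} to obtain the Sobolev lower bound on $\calF^\pi_{\sup}$, where there is no explicit jump formula to fall back on. So their approach amortizes across two lemmas; yours is self-contained and more elementary for this one.
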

\begin{proof}
For a fixed $\psi\in \calC_c^1 (\Omega)$ we consider any $\eta\in\R^d$ such that $0 < |\eta| < \text{dist} (\supp (\psi), \partial \Omega)$, then
\begin{align*}
    \int_\Omega \hat{u}^h(x)\frac{\psi(x + \eta) - \psi(x)}{|\eta| } \dd x
    &= \frac{1}{|\eta|} \int_\Omega \hat{u}^h(x) (\psi(x + \eta) - \psi(x)) \dd x \\
    &=\frac{1}{|\eta|} \int_\Omega \left( \hat{u}^h(x - \eta) - \hat{u}^h(x) \right) \psi(x) \dd x \\
    &\leq \frac{1}{|\eta|} \|\psi \|_{L^\infty} \int_{\supp (\psi)} \left| \hat u^h(x - \eta) - \hat u^h(x) \right| \dd x.
\end{align*}
Note that
\begin{align*}
    \int_{\supp (\psi)} \left| \hat u^h(x - \eta) - \hat u^h(x) \right| \dd x = \sum_{K\in\calT^h} \int_{K\cap \supp (\psi)} \left| \hat u^h(x - \eta) - \hat u^h(x) \right| \dd x.
\end{align*}
Since $|\eta| < \text{dist} (\supp (\psi), \partial \Omega)$, we have that $x - \eta \in \Omega$ for any $x\in \supp (\psi)$. Therefore, we can find a unique cell $L\in \calT^h$ such that $x - \eta \in L$. The line segment $[x, x - \eta]$ between the points $x$ and $x - \eta$ defines a path between cell $K$ and cell $L$, consisting of pairs $(K_i, K_{i+1}) \in \Sigma^h$ such that $[x, x - \eta] \cap (K_i|K_{i+1}) \neq \emptyset$. We denote this sequence of pairs by $\{ (K_0=K, K_1), ~ (K_1, K_2), ~ \dots, ~ (K_{n-1}, K_n=L) \}$. We further define the sets
\begin{align*}
    \text{Cyl}_{\Sigma^h}(x,\eta) &:=\left\{ (\tilde M,\tilde L)\in \Sigma^h\, :\, [x, x - \eta] \cap (\tilde M|\tilde L) \neq \emptyset \right\},\qquad x\in\Omega\,,\\
    \text{Cyl}_\Omega((K,L),\eta) &:= \Bigl\{ x\in \Omega\, :\, [x, x - \eta] \cap (K | L) \neq \emptyset\Bigr\}\,,\qquad (K,L)\in\Sigma^h\,.
\end{align*}
Applying the triangle inequality, we have
     \begin{align*}
         \int_{\supp (\psi)} |\hat u^h(x - \eta) - \hat u^h(x)| \dd x
         &\le \sum_{K\in\calT^h} \int_{K\cap \supp (\psi)} \sum_{i=0}^{n-1}\left|u^h(K_{i+1}) - u^h(K_i)\right|  \dd x \\
         &\le \sum_{K\in\calT^h} \int_{K\cap \supp (\psi)} \sum_{(M,L)\in\Sigma^h}\left|u^h(L) - u^h(M)\right| \Ind_{\text{Cyl}_{\Sigma^h}(x,\eta)}(M,L)\,\dd x \\
         &= \sum_{(M,L)\in\Sigma^h} \left|u^h(L) - u^h(M)\right| \int_{\supp (\psi)} \Ind_{\text{Cyl}_\Omega((M,L),\eta)}(x)\,\dd x\\
        &\le \sum_{(K,L)\in\Sigma^h} \left|u^h(L) - u^h(M)\right| |(K|L)| |\eta|,
     \end{align*}
where the last inequality follows from the geometric argument that $\Ind_{\text{Cyl}_\Omega((K,L),\eta)}(x) = 1$ if and only if the point $x\in\Omega$ is in the cylinder $\text{Cyl}_\Omega((K|L), \eta)$ with base $(K|L)$ and axis parallel to $\eta$.

Applying the lower bound from \eqref{assumption_nu} and then the H\"older inequality, we then obtain
\begin{align*}
     \int_{\supp (\psi)} |\hat u^h(x - \eta) - \hat u^h(x)| \dd x
    &\le \frac{|\eta|}{C_l} \sum_{(K,L)\in\Sigma^h} \left|u^h(L) - u^h(K)\right| h \vartheta^h(K, L) \\
    &\le 2 \frac{|\eta|}{C_l} \left( \sum_{K\in\calT^h} \rho^h(K) \sum_{L\in\calT^h_K} h^2 \kappa^h(K,L) \right)^{1/2} \sqrt{\calD_h(\rho^h)} \\
    &\le 2 \frac{|\eta| \sqrt{C_\kappa}}{C_l} \sqrt{\calD_h(\rho^h)}.
\end{align*}
Therefore,
$$
    \int_\Omega \hat{u}^h(x)\frac{\psi(x + \eta) - \psi(x)}{|\eta| } \dd x
    \leq 2 \frac{\sqrt{C_\kappa}}{C_l} \|\psi \|_{L^\infty} \sqrt{\calD_h(\rho^h)}.
$$
Taking the limit superior as $|\eta|\to 0$, and applying the dominated convergence theorem, we obtain
\[
     \int_\Omega \hat u^h(x) (\partial_\eta\psi)(x)\dd x \le 2 \frac{\sqrt{C_\kappa}}{C_l} \| \psi \|_{L^\infty} \sqrt{\calD_h(\rho^h)}\,.
\]
Finally, we take the supremum over $\psi \in \calC_c^1 (\Omega)$ satisfying $ \| \psi \|_{L^\infty}\le 1$ and use the variational characterization of the BV-seminorm to obtain
$$
    |Du^h|(\Omega) \leq 2 \frac{\sqrt{C_\kappa}}{C_l} \sqrt{\calD_h(\rho^h)} \qquad\text{for all $h > 0$}\,.
$$
The bound on the $L^1$-norm follows directly from assumption \eqref{assumption_pi}.
\end{proof}

With the BV-bound proven in Lemma~\ref{lemma_BV_bound}, we are now prepared to prove the compactness result for the GGF-solutions of \eqref{eq_Kolmogorov}.
\begin{theorem}[Strong compactness]\label{th_compactness}
Let the family of curves $\{\rho^h\}_{h>0}$ be the GGF-solutions of \eqref{eq_Kolmogorov} with $ \sup_{h>0} \calE_h(\rho^h_0) < \infty$. Then there exists $u \in L^1( (0, T); L^1(\Omega))$ and a (not relabelled) subsequence such that 
\[
    \hat{u}^h_t \to u_t \quad\text{strongly in $L^1(\Omega)$\; for $\calL^1$-a.e. $t\in(0,T)$.}
\]
\end{theorem}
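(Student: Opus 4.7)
The plan is to combine a uniform bound on $\hat u^h$ in $L^2(0,T;\mathrm{BV}(\Omega))$ coming from the GGF energy--dissipation balance and Lemma~\ref{lemma_BV_bound}, with a uniform time--equicontinuity of $t\mapsto \hat\rho^h_t$ in a weak norm coming from the continuity equation and the equi-integrability of the flux from Lemma~\ref{lemma_properties_flux}, and to invoke an Aubin--Lions--Simon type compactness lemma to extract a subsequence converging strongly in $L^1(0,T;L^1(\Omega))$; pointwise a.e.\ $t$ convergence then follows by a further extraction.

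First, I would derive the space regularity. Since $(\rho^h,j^h)$ is a GGF-solution in the sense of Definition~\ref{def_GGF_solution} with $\sup_{h>0} \calE_h(\rho_0^h) \le M_0 < \infty$, the energy--dissipation balance yields
$$
    \int_0^T \calD_h(\rho^h_t)\,\dd t \le \calE_h(\rho_0^h) - \calE_h(\rho_T^h) \le M_0.
$$
Lemma~\ref{lemma_BV_bound} then gives $|D\hat u^h_t|(\Omega)^2 \le (4C_\kappa/C_l^2)\,\calD_h(\rho^h_t)$, and combining with the uniform $L^1$-bound $\|\hat u^h_t\|_{L^1(\Omega)}\le 1/\pi_{\min}$, one finds that $\hat u^h$ is uniformly bounded in $L^2(0,T;\mathrm{BV}(\Omega))$.

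Second, I would establish the time equicontinuity. Exactly as in the proof of Lemma~\ref{lemma_limit_pair}, for any test function $\varphi\in \calC^1(\Omega)$ with $\|\nabla\varphi\|_\infty\le 1$ and any $0\le s\le t\le T$,
$$
    \bigl|\langle \varphi,\hat\rho^h_t-\hat\rho^h_s\rangle\bigr| \le \int_s^t |\hat{\jmath}^h_r|(\Omega)\,\dd r,
$$
and the equi-integrability of $t\mapsto|\hat{\jmath}^h_t|(\Omega)$ from Lemma~\ref{lemma_properties_flux} forces the right-hand side to vanish uniformly in $h$ as $t-s\to 0$. Since $\hat u^h_t = (\dd\hat\rho^h_t/\dd\calL^d)/\hat v^h$ with $\hat v^h := \dd\hat\pi^h/\dd\calL^d$ independent of $t$ and uniformly bounded between $\pi_{\min}$ and $\pi_{\max}$, the same equicontinuity transfers to $\hat u^h$ viewed as a curve into the dual of $W^{1,\infty}_0(\Omega)$.

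Combining these two ingredients with the compact embedding $\mathrm{BV}(\Omega)\hookrightarrow L^1(\Omega)$ (valid for the bounded convex set $\Omega$), I would invoke an Aubin--Lions--Simon compactness lemma to conclude that $\{\hat u^h\}_{h>0}$ is relatively compact in $L^1(0,T;L^1(\Omega))$. Extracting a subsequence to get strong $L^1((0,T)\times\Omega)$ convergence, and then a further extraction to obtain pointwise a.e.\ $t$ convergence in $L^1(\Omega)$, yields the claim, the limit being identified with $u := \dd\rho/\dd\pi$ through the narrow convergence of Lemma~\ref{lemma_limit_pair} combined with the assumed $L^1$-convergence $\hat v^h\to \dd\pi/\dd\calL^d$. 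The main technical obstacle is making the Aubin--Lions step rigorous in the non-reflexive BV setting with measure-valued fluxes; this can be handled either by a Kolmogorov--Riesz-type argument (controlling space and time translations separately using the BV-bound and the flux bound) or by appealing to Rossi--Savar\'e-type refinements that only require pointwise a.e.\ relative compactness together with integrated time-equicontinuity in a weaker norm.
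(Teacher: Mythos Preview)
Your proposal is correct and follows essentially the same approach as the paper: the paper combines the $L^2(0,T;\mathrm{BV}(\Omega))$ bound from Lemma~\ref{lemma_BV_bound} with the weak integral equicontinuity coming from Lemma~\ref{lemma_limit_pair} and the flux equi-integrability, and then invokes precisely the Rossi--Savar\'e compactness result you mention at the end (\cite[Theorem~2 and Proposition~1.10]{rossi2003tightness}) to conclude relative compactness in $L^1((0,T);L^1(\Omega))$. The only cosmetic difference is that the paper phrases the time equicontinuity directly for $\hat\rho^h$ in the bounded-Lipschitz distance rather than transferring it to $\hat u^h$, but this is the same argument.
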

\begin{proof} We first notice that the BV bound from Lemma~\ref{lemma_BV_bound} holds for almost every $t\in [0, T]$.  Therefore, $\{t\mapsto\hat{u}^h_t\}$ is tight with respect to the BV-norm in the sense that
$$
    \sup_{h>0} \int_0^T \| \hat{u}^h_t \|_{BV(\Omega)}^2 \dd t \leq 2C^2 \left( T + \sup_{h>0} \int_0^T \calD_h(\rho^h_t) \dd t \right) \leq 2C^2 \left( T +  \sup_{h>0} \calE_h(\rho^h_0) \right).
$$
Moreover, Lemma~\ref{lemma_limit_pair} provides weak integral equicontinuity, i.e.\
\begin{align*}
    \lim_{\tau\to 0} \sup_{h>0} \int_0^{T-\tau} d_{BL}(\hat{\rho}^h_{t+\tau}, \hat{\rho}^h_t) \dd t
    &\leq \lim_{\tau\to 0} \sup_{h>0} \int_0^{T-\tau} |\, \hat{\jmath}^h | \left([t, t + \tau] \times \Omega \right) \dd t \\
    &\leq  \lim_{\tau\to 0} \int_0^{T-\tau} C \tau \dd t = 0.
\end{align*}
Together, the tightness condition and the weak integral equicontinuity yield the relative compactness of $\{\hat u^h\}_{h>0}$ in $\calM((0, T); L^1(\Omega))$ \cite[Theorem 2]{rossi2003tightness}. The relative compactness in $\calM((0, T); L^1(\Omega))$ combined with the uniform integrability estimate
$$
   \sup_{h>0} \int_A \| \hat{u}^h_t \|_{L^1(\Omega)} \dd t \leq \pi_{\min}^{-1}|A| \qquad \text{for any $\calL^1$-measurable set} A\subset [0, T]. 
$$
provides that $\{\hat u^h\}_{h>0}$ is relatively compact in $L^1((0,T); L^1(\Omega))$ \cite[Proposition~1.10]{rossi2003tightness}.
Therefore, there exists some $u\in L^1((0,T); L^1(\Omega))$ and a subsequence of $\hat u^h$ (not relabelled) such that $\hat{u}_t^h \to u_t$ in $L^1(\Omega)$ for almost every $t\in(0,T)$. 
\end{proof}



\section{Gamma-convergence results}\label{sec_gamma_convergence}

This section contains convergence results for the Fisher information $\calD_h$ and the dual dissipation potential $\calR^*_h$. Since $\calR^*_h$ and $\calD_h$ are closely related, we will first introduce the results that hold for both functionals using a generic notation (cf.\ Section~\ref{sec_general_gamma_convergence}). Then we deal with the dual dissipation potential in Section~\ref{sec_dissipation}, where we show the asymptotic upper bound. In Section~\ref{sec_fisher} we apply the general results from Section~\ref{sec_general_gamma_convergence} to prove the $\Gamma$-convergence of the Fisher information.  

\subsection{General Gamma-convergence results}\label{sec_general_gamma_convergence} The notation throughout the first part of this section is as follows: 
\begin{enumerate}[label=(\roman*)]
    \item Let $\calO$ be the family of all open subsets of $\Omega$ with Lipschitz boundary. We denote by $\calT^h|_A$ the restriction of $\calT^h$ to $A$, i.e.\ $\calT^h|_A := \left\{ K \in \calT^h:\, K \cap A \neq \emptyset \right\}$. Furthermore, we introduce the set $A_{\calT^h} := \Omega\cap \text{int} \bigcup_{K\in\calT^h|_A} \overline{K}$, which can be larger then $A$ (see Figure~\ref{fig_restricted}). In what follows, we will use the convergence of the domain $A_{\calT^h}$ to $A$ in the following sense:
\end{enumerate}
    \begin{proposition}\label{prop_convergence_of_covering}
        For any $A\in\calO$ the indicator functions $\Ind_{A_{\calT^h}}$ converge pointwise $\calL^d$-a.e.\ to $\Ind_A$. 
    \end{proposition}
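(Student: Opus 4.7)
The plan is to partition $\Omega$ into three parts, $A$, $\Omega\setminus\overline{A}$, and $\partial A$, and verify the pointwise convergence on the first two while discarding the third as a $\calL^d$-null set using the Lipschitz regularity of $\partial A$.

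First, I would show the inclusion $A \subset A_{\calT^h}$ for every $h>0$, which already handles the interior. Indeed, the collection of open cells $\{K\}_{K\in\calT^h}$ is disjoint and their closures cover $\overline\Omega$, so the set $N^h := \Omega\setminus\bigcup_{K\in\calT^h} K$ (the union of faces of $\calT^h$) is contained in the $(d-1)$-skeleton and hence has $\calL^d(N^h)=0$. For any $x\in A\setminus N^h$, there is a unique $K\in\calT^h$ with $x\in K$; since $x\in K\cap A$, this cell belongs to $\calT^h|_A$, so $x \in K \subset \text{int}\bigcup_{L\in\calT^h|_A}\overline{L}$, i.e.\ $x\in A_{\calT^h}$. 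Thus $\Ind_{A_{\calT^h}}(x) = 1 = \Ind_A(x)$ for every $h>0$ and every $x\in A$ outside the countable union $\bigcup_h N^h$, which is $\calL^d$-negligible.

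Next, I would treat the exterior. Fix $x\in \Omega\setminus\overline{A}$ and set $\delta := \mathrm{dist}(x,\overline A)>0$. For any $h<\delta/2$ and any cell $K\in\calT^h$ with $x\in\overline K$, the diameter bound $\mathrm{diam}(K)\le h<\delta/2$ forces $\overline K\subset B(x,\delta)$, which is disjoint from $\overline A$; hence $K\notin\calT^h|_A$. Therefore $x$ lies outside $\bigcup_{L\in\calT^h|_A}\overline L$, and in particular outside its interior, so $\Ind_{A_{\calT^h}}(x)=0=\Ind_A(x)$ for all sufficiently small $h$.

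Finally, the remaining set $\partial A$ is $\calL^d$-negligible because $A$ has Lipschitz boundary: locally $\partial A$ is the graph of a Lipschitz function over a hyperplane and therefore has $\calL^d$-measure zero. Combining the three cases yields $\Ind_{A_{\calT^h}}(x)\to\Ind_A(x)$ for $\calL^d$-a.e.\ $x\in\Omega$, as claimed. No step is really an obstacle here; the only subtlety is making sure to throw away the measure-zero skeletons $N^h$ so that each $x\in A$ actually sits in an open cell, after which the geometric argument using the diameter bound $h$ is immediate.
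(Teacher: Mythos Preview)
Your argument is correct. The paper states this proposition without proof, so there is nothing to compare against; your three-case split ($A$, $\Omega\setminus\overline A$, $\partial A$) together with the diameter bound $\mathrm{diam}(K)\le h$ and the Lipschitz regularity of $\partial A$ is exactly the natural route.

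One minor remark: in the interior case you do not actually need to excise the skeleton $\bigcup_h N^h$. For any $x\in A$ pick $r>0$ with $B(x,r)\subset A\cap\Omega$; once $h<r/2$, every cell meeting $B(x,r/2)$ lies entirely in $A$ and hence in $\calT^h|_A$, and the union of their closures covers $B(x,r/2)$, so $x$ is in the interior of that union. Thus $\Ind_{A_{\calT^h}}(x)=1$ for all sufficiently small $h$ and \emph{every} $x\in A$, not merely almost every. This avoids the (harmless but slightly awkward) appeal to a countable union over the parameter $h$. Your version is nonetheless sufficient for the a.e.\ statement claimed.
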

    \begin{figure}[h]
    \includegraphics[height=5cm]{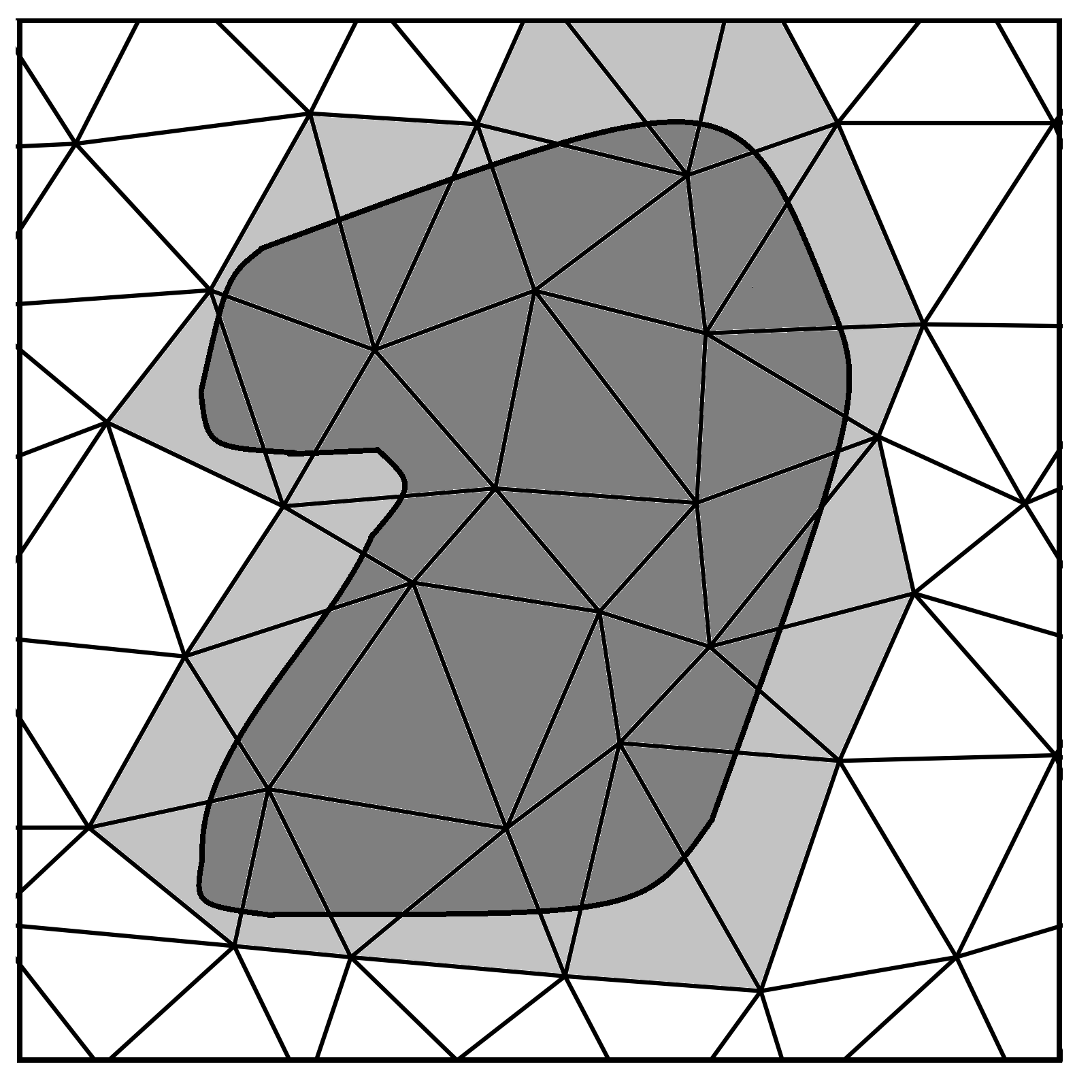}
    \caption{An example of a triangular tessellation on the square domain $\Omega$: Consider a set $A$ that is colored in dark gray. Then the set $A_{\calT^h}$ is the union of the set $A$ and the light gray area.}
    \label{fig_restricted}
    \end{figure}
\begin{enumerate}
    \item[(ii)] $\{\mu^h\}_{h>0}$ is a family of probability measures on $\calT^h$ such that $\dd \hat\mu^h/\dd \calL^d\in L^1(\Omega)$ for all $h>0$, where we use the reconstruction procedure defined in \eqref{eq_lift_measure}, i.e.\ 
    \begin{equation*}
        \frac{\dd \hat\mu^h}{\dd \calL^d} = \sum_{K\in\calT^h} \frac{\mu^h(K)}{|K|}\Ind_K,
    \end{equation*}
    and there exists $\mu\in\calP(\Omega)$ with Lebesgue density $\dd \mu/\dd \calL^d \in L^1(\Omega) $ such that $$
        \frac{\dd \hat\mu^h}{\dd \calL^d}  \to \frac{\dd \mu}{\dd \calL^d} \quad \text{in } L^1(\Omega) \text{ and pointwise } \calL^d\text{-a.e.\ as } h\to 0.
    $$
    
    \item[(iii)] For each $h>0$, the measure $\mu^h$ plays the role of the reference measure for the functional
    \begin{equation}\label{eq_general_functional_def}
        \calF_h^{\mu} (v^h) := \sum_{(K, L)\in\Sigma^h} \left| (\dnabla v^h) (K, L) \right|^2 \kappa^h(K, L)\, \mu^h(K),\qquad\,v^h \in \calB(\calT^h).
    \end{equation}
    \item[(iv)]  We introduce a \textit{localized} version of the functional $\calF^\mu_h$:
    \begin{equation}\label{eq_general_localized_func}
        \calF_h^\mu (v^h, A) := \sum_{(K, L)\in\Sigma^h|_A} \left| (\dnabla v^h) (K, L) \right|^2 \kappa^h(K, L) \mu^h(K),\qquad A\in\calO,
    \end{equation}
    where the summation goes over the restriction of $\Sigma^h$ to $A$, i.e. $$\Sigma^h|_A = \left\{ (K, L) \in \Sigma^h:\, K,L\in\calT^h|_A \right\}.$$

    \item[(v)] Eventually, we will prove $\Gamma$-convergence with respect to the $L^2$-topology. Therefore, we embed the discrete functional into the full $L^2(\Omega)$ space as:
    \begin{equation}\label{eq_lifted_functional}
        \Tilde{\calF}_h^\mu (v, A) := \begin{cases}
            \calF_h^\mu (v^h, A) & \text{if } v \in \text{PC}(\calT^h), \\
            +\infty & \text{otherwise.}
        \end{cases}
    \end{equation}
\end{enumerate}

\begin{remark}
This generic notation relates to the Fisher information $\calD_h$ in the following way:
$$
    \calD_h(\rho^h) = \calF_h^\pi\Bigl(\sqrt{u^h}\Bigr) \quad \text{with } u^h = \frac{\dd \rho^h}{\dd \pi^h}.
$$
The relation of $\calF_h^\mu$ with $\calR^*_h$ is more subtle. We show in Lemma~\ref{lemma_limsup_dualdiss} below that for a smooth $\varphi$ and a specific choice of approximating sequence $\varphi^h \to \varphi$ it holds that
$$
    \calR^*_h (\rho^h, \dnabla \varphi^h) = \frac{1}{4} \calF_h^\rho(\varphi^h) + o(1)|_{h\to 0}.
$$
\end{remark}

With the notation at hand, we outline the steps to prove $\Gamma$-convergence for $\calF^\mu_h$ by means of the localization technique:
\begin{enumerate}[label=(\roman*)]
        \item The family of functionals $\{\Tilde{\calF}_h^\mu (\cdot, A)\}_{h>0}$ has a subsequential $\Gamma$-limit $\calF^\mu(\cdot, A)$ for all $A\in\calO$ (cf.\ Lemma~\ref{lemma_existence_gamma_limit}). 
        \item The functionals $\calF^\mu(v, A)$ and, in particular, $\calF^\mu(v, \Omega)$ have an integral representation:
        $$
            \calF^\mu(v, A) = \int_A f(x, \nabla v) \dd \mu.
        $$
        For this, we need to prove that $\calF^\mu(v, \cdot)$ satisfies several properties as a set function, namely that $\calF^\mu(v, \cdot)$ is a measure and is local (cf.\ Proposition~\ref{prop_properties_Gamma_limit}).
        \item For $v\in H^1(\Omega, \mu)$, the integrand has an explicit upper bound $ f(x, \nabla v) \leq \langle \nabla v, \mathbb{T} \nabla v \rangle$ (cf.\ Lemma~\ref{lemma_limit_upper_bound}) with some tensor $\T$ that comprises the properties of the tessellations and the kernel (cf.\ Lemma~\ref{lemma_properties_tensor}). For a measure $\mu$ with the density $\dd \mu / \dd \calL^d$ bounded away from zero, we prove the exact integral representation, i.e.\ $ f(x, \nabla v) = \langle \nabla v, \mathbb{T} \nabla v \rangle$ (cf.\ Theorem~\ref{th_identification}).
    \end{enumerate}

\subsubsection*{Definitions and compactness}
We define
\begin{align}\label{eq_D_inf_D_sup}
    \left.\begin{aligned}
        \calF^\mu_{\inf}(\cdot, A) &:= \Gamma\text{-}\liminf_{h\to 0} \Tilde{\calF}^\mu_h (\cdot, A) \\
        \calF^\mu_{\sup}(\cdot, A) &:= \Gamma\text{-}\limsup_{h\to 0} \Tilde{\calF}^\mu_h (\cdot, A)
    \end{aligned}\qquad\right\}  \quad \text{for every } A\in\calO,
\end{align}
where, by the usual definition,
\begin{align*}
    \left.\begin{aligned}
        &\calF^\mu_{\inf}(\cdot, A) = \inf \big\{  \liminf_{h\to 0} \Tilde{\calF}^\mu_h (v_h, A) ~: ~ v_h \to v \big\}
        \\
        &\calF^\mu_{\sup}(\cdot, A) = \inf \big\{  \limsup_{h\to 0} \Tilde{\calF}^\mu_h (v_h, A) ~: ~ v_h \to v \big\}
    \end{aligned}\qquad\right\}  \quad \text{for every } v\in L^2(\Omega, \mu), ~ A\in\calO.
\end{align*}

Dealing with the functionals on the product space $L^2(\Omega, \mu)\times\calO$ has a few subtleties due to the set dependence. We proceed in accordance with the theory presented in \cite[Chapters 16-20]{maso1993introduction}. Since $\Tilde{\calF}^\mu_h(\cdot, A)$ are increasing functionals, i.e.\ $\Tilde{\calF}^\mu_h(\cdot, A') \leq \Tilde{\calF}^\mu_h(\cdot, A)$ for $A' \subset A$, we can apply the next definition 
\begin{definition}
We say that $\calF^\mu_h$ $\overline{\Gamma}-$converges to $\calF^\mu$ (in $L^2(\Omega, \mu)$) if $\calF^\mu$ is the
inner regular envelope of both functionals $\calF^\mu_{\inf}$ and $\calF^\mu_{\sup}$.
\end{definition}

The compactness result is standard.
\begin{lemma}\label{lemma_existence_gamma_limit}
    The family of functionals $\{\Tilde{\calF}_h^\mu\}_{h>0}$ defined in \eqref{eq_lifted_functional} is sequentially $\overline{\Gamma}$-compact, i.e.\ there exists a functional $\calF^\mu: L^2(\Omega, \mu)\times\calO \to [0, +\infty]$ such that $\overline{\Gamma}$-$\displaystyle \lim_{h\to 0} \Tilde{\calF}_h^\mu = \calF^\mu$ for some subsequence.
\end{lemma}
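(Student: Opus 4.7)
The statement is a standard $\overline{\Gamma}$-compactness result for families of increasing set-indexed functionals on a separable metric space, and the plan is to reduce it to the abstract compactness theorem of \cite[Chapters~14--16]{maso1993introduction} (e.g.\ Theorem~16.9 combined with Theorem~19.5). Two structural ingredients underpin the argument. First, $L^2(\Omega,\mu)$ is separable, since $\mu$ is a finite Borel measure with $L^1$-Lebesgue density on the bounded domain $\Omega$. Second, for every $h>0$ and $v\in L^2(\Omega,\mu)$, the set function $A \mapsto \Tilde{\calF}_h^\mu(v, A)$ is increasing: $\Tilde{\calF}_h^\mu(v, A') \le \Tilde{\calF}_h^\mu(v, A)$ whenever $A' \subset A$, because \eqref{eq_general_localized_func} is a sum of non-negative terms indexed by $\Sigma^h|_A$.

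First, I would fix a countable family $\calR \subset \calO$ that is \emph{rich} in the sense of \cite[Definition~14.1]{maso1993introduction}, i.e.\ dense for inner approximation of every $A\in\calO$ — for instance, the collection of finite unions of open polytopes in $\Omega$ with rational vertices contained in $\Omega$. For each fixed $A\in\calR$, the abstract $\Gamma$-compactness theorem on the separable space $L^2(\Omega,\mu)$ yields a subsequence along which $\Tilde{\calF}_h^\mu(\cdot, A)$ $\Gamma$-converges. Enumerating $\calR = \{A_n\}_{n\in\N}$ and applying Cantor's diagonal extraction, one obtains a single subsequence (not relabelled) along which $\Tilde{\calF}_h^\mu(\cdot, A_n)$ $\Gamma$-converges to some limit $\calF^\mu(\cdot, A_n)$ for every $n\in\N$.

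Second, I would extend $\calF^\mu$ to every $A\in\calO$ via inner regularization,
\[
    \calF^\mu(v, A) := \sup\bigl\{ \calF^\mu(v, A') : A'\in\calR,\ A' \ssubset A\bigr\}.
\]
Monotonicity of $A\mapsto \Tilde{\calF}_h^\mu(v, A)$ is inherited by the $\Gamma$-$\liminf$ and $\Gamma$-$\limsup$ in $h$, so that for every $A',A''\in\calR$ with $A'\ssubset A\ssubset A''$ one obtains the chain
\[
    \calF^\mu(\cdot, A') \le \calF^\mu_{\inf}(\cdot, A) \le \calF^\mu_{\sup}(\cdot, A) \le \calF^\mu(\cdot, A'').
\]
Taking the supremum over $A'\in\calR$ with $A'\ssubset A$, together with the richness of $\calR$ to approach $A$ from above by such $A''$, forces the inner regular envelopes of $\calF^\mu_{\inf}(\cdot,\cdot)$ and $\calF^\mu_{\sup}(\cdot,\cdot)$ to coincide with $\calF^\mu(\cdot,\cdot)$, which is precisely $\overline{\Gamma}$-convergence to $\calF^\mu$ along the extracted subsequence.

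The only non-routine point is the construction of the rich family $\calR$ together with the verification that every $A\in\calO$ with Lipschitz boundary can be exhausted from inside by members of $\calR$ — a standard geometric fact for Lipschitz open sets, obtained, for example, by sublevel sets of a mollified signed distance to $\partial A$. Everything else is a direct application of the abstract $\overline{\Gamma}$-compactness machinery, so no essential difficulty is anticipated, and the lemma will be stated with the subsequence chosen once and for all in the subsequent arguments of Section~\ref{sec_fisher}.
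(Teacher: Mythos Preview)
Your proposal is correct and takes essentially the same approach as the paper: the paper's proof is a one-line citation to the abstract $\overline{\Gamma}$-compactness theorem \cite[Proposition~16.9]{maso1993introduction}, and your argument is precisely a sketch of how that theorem is proved (rich countable family, diagonal extraction, inner-regular extension exploiting monotonicity). One small slip: in the chain argument you should be sandwiching an arbitrary $A'\ssubset A$ by some $A''\in\calR$ with $A'\ssubset A''\ssubset A$, not approaching $A$ from above---but this does not affect the validity of the overall scheme.
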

\begin{proof}
    The compactness theorem for localized functionals is similar to the standard compactness theorem for the $\Gamma$-convergence (see \cite[Proposition 16.9]{maso1993introduction}).
\end{proof}

\begin{remark}
    If one knows a priori that $\calF^\mu_{\sup}$ is inner regular, then the $\overline{\Gamma}$-limit is equivalently characterized by the usual $\Gamma$-limits for all $A\in\calO$ (\cite[Proposition 16.4, Remark 16.5]{maso1993introduction}):
\begin{enumerate}
    \item[($\Gamma_{\inf}$)] for every $v\in L^2(\Omega)$, for every $A\in \calO$, and for every sequence $v_h \to v$ in $L^2(\Omega)$ it holds that
    $$
        \calF(v, A) \leq \liminf_{h\to 0} \Tilde{\calF}_h^\mu(v_h, A); 
    $$
    \item[($\Gamma_{\sup}$)] for every $v\in L^2(\Omega)$ and for every $A \in \calO$, there exists a sequence $v_h \to v$ in $L^2(\Omega)$ such that
    $$
        \calF(v, A) \geq \limsup_{h\to 0} \Tilde{\calF}_h^\mu(v_h, A).
    $$
\end{enumerate}
\end{remark}

\subsubsection*{Integral representation}
Since the subsequential $\overline{\Gamma}$-limit $\calF^\mu$ exists, it is equal to the inner regular envelope of both functionals $\calF^\mu_{\inf}$ and $\calF^\mu_{\sup}$. Therefore, it suffices to show that $\calF^\mu_{\sup}$ is inner regular to conclude that $\calF^\mu = \calF^\mu_{\sup}$. We will establish inner regularity together with other properties of $\calF^\mu_{\sup}$ as a set function in Propositon~\ref{prop_properties_F_sup}. All these properties, as well as possible integral representation, rely on growth conditions for $\calF^\mu_{\sup}$.

To prove the growth conditions with respect to integrating against a possibly unbounded measure $\mu$, we fix a suitable definition of $H^1(\Omega, \mu)$:
\begin{definition}\label{def_sobolev_space}
    We define $H^1(\Omega,\mu)$ to be the completion of $\calC_b^2(\Omega)$ w.r.t.\ the norm
    \[
        \|f\|_{H^1(\Omega, \mu)}^2 := \|f\|_{L^2(\Omega, \mu)}^2 + \|\nabla f\|_{L^2(\Omega, \mu)}^2 \,.
    \]
\end{definition}

A useful observation is the convergence of the discrete approximations $\bbL \bbP v$ to $v$.
\begin{lemma}
    Let $v\in H^1(\Omega,\mu)$, then $\hat v^h:=\bbL \bbP v \to v$ in $L^2(\Omega, \mu)$.
\end{lemma}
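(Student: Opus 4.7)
The plan is to prove the convergence by a standard three-step strategy: establish the claim first for smooth test functions via uniform continuity, then prove a uniform-in-$h$ $L^2$-operator bound for $\bbL\bbP$, and finally conclude by density of $\calC_b^2(\Omega)$ in $H^1(\Omega,\mu)$.

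For the smooth case $v\in\calC_b^2(\Omega)$, $v$ is uniformly continuous on the bounded domain $\Omega$, so with modulus of continuity $\omega_v$ one has, for every $x\in K\in\calT^h$,
\[
 |\hat v^h(x)-v(x)| \le \intbar_K |v(y)-v(x)|\,\dd y \le \omega_v(\text{diam}(K)) \le \omega_v(h),
\]
giving uniform and hence, since $\mu$ is a probability measure, $L^2(\Omega,\mu)$-convergence $\hat v^h\to v$. For the operator bound, Jensen's inequality applied to the cell-average together with the paper's standing assumptions on the density (the upper/lower bounds on $\dd\mu/\dd\calL^d$, available from \eqref{assumption_pi} in the intended application $\mu=\pi$) yields
\[
 \|\bbL\bbP w\|_{L^2(\Omega,\mu)}^2 \le \sum_{K\in\calT^h}\frac{\mu(K)}{|K|}\int_K w^2\,\dd x \le C\,\|w\|_{L^2(\Omega,\mu)}^2 \qquad \forall\, w\in L^2(\Omega,\mu),
\]
with $C$ independent of $h$.

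For an arbitrary $v\in H^1(\Omega,\mu)$, Definition~\ref{def_sobolev_space} provides a sequence $\{v_n\}\subset\calC_b^2(\Omega)$ with $v_n\to v$ in $H^1(\Omega,\mu)$, and in particular in $L^2(\Omega,\mu)$. The triangle inequality
\[
 \|\hat v^h - v\|_{L^2(\Omega,\mu)} \le \|\bbL\bbP(v-v_n)\|_{L^2(\Omega,\mu)} + \|\bbL\bbP v_n - v_n\|_{L^2(\Omega,\mu)} + \|v_n - v\|_{L^2(\Omega,\mu)},
\]
combined with the operator bound, shows the first and third terms vanish as $n\to\infty$, while for each fixed such $n$ the middle term vanishes as $h\to 0$ by the smooth case. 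Taking $\limsup_{h\to 0}$ first and then $n\to\infty$ concludes the argument. The main obstacle is the uniform operator bound in the middle step: it hinges essentially on $\dd\mu/\dd\calL^d$ being essentially bounded away from $0$ and $\infty$, which is what makes $\bbP v$ well-defined for $v\in L^2(\Omega,\mu)$ and controls the rearrangement of mass by cell-averaging uniformly in the tessellation geometry; without this input the density argument would fail and one would additionally need to invoke, e.g., maximal-function estimates to dominate the averages.
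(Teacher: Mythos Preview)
Your proof is correct and follows the paper's approach: establish the claim for $v\in\calC_b^2(\Omega)$ and extend by density. The paper compresses the density step into the single line ``by density arguments, it suffices to consider $v\in\calC_b^2(\Omega)$'' and then gives essentially your smooth-case estimate via $\|\nabla v\|_{L^\infty}$, whereas you spell out the uniform $L^2$-operator bound for $\bbL\bbP$ and correctly flag its reliance on two-sided bounds for $\dd\mu/\dd\calL^d$.
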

\begin{proof} By density arguments, it suffices to consider $v\in \calC_b^2(\Omega)$.

The fact that $\hat v^h\in L^2(\Omega,\mu)$ follows directly from the boundedness of $v$, and the convergence follows from the following inequality:
\begin{align*}
    \| \hat{v}^h - v \|^2_{L^2(\Omega, \mu)}
    &= \sum_{K\in\calT^h} \int_{K} \left| \intbar_K v(y) \dd y - v(x) \right|^2 \mu(\dd x)
        \\
    &\leq \sum_{K\in\calT^h} \int_{K} \intbar_K \left| v(y) - v(x) \right|^2 \dd y\, \mu(\dd x) \leq h^2\| \nabla v\|_{L^\infty}^2\, \mu(\Omega)\,.
\end{align*}
    Passing to the limit $h\to 0$ yields the statement.
\end{proof}

Now we establish the Sobolev upper bound for $\calF^\mu_{\sup}$. 
\begin{lemma}\label{lemma_general_upper_bound}
    For any $v \in H^1(\Omega, \mu)$ and $A\in\calO$,
    $$
        \calF_{\sup}^\mu (v, A) \leq 4 C_\kappa \int_A |\nabla v|^2 \dd \mu\,, 
    $$
    where $C_\kappa$ is as defined in \eqref{assumption_kernel_uniform_upper_bound}.
\end{lemma}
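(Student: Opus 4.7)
\medskip

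\emph{Plan.} I would reduce to the smooth case by density and then construct an explicit recovery sequence. Since $\calF_{\sup}^\mu(\cdot,A)$ is automatically lower semicontinuous with respect to $L^2(\Omega,\mu)$-convergence (a general feature of $\Gamma\text{-}\limsup$'s) and $\calC_b^2(\Omega)$ is dense in $H^1(\Omega,\mu)$ by Definition~\ref{def_sobolev_space}, it suffices to prove the estimate for $v\in\calC_b^2(\Omega)$. Indeed, for $v_n\in\calC_b^2(\Omega)$ with $v_n\to v$ in $H^1(\Omega,\mu)$, the $L^2(\Omega,\mu)$-lower semicontinuity of $\calF_{\sup}^\mu(\cdot,A)$ together with convergence of $\nabla v_n$ in $L^2(\Omega,\mu)$ yields
\[
    \calF_{\sup}^\mu(v,A)\le \liminf_{n\to\infty}\calF_{\sup}^\mu(v_n,A)\le 4C_\kappa\liminf_{n\to\infty}\int_A|\nabla v_n|^2\,\dd\mu = 4C_\kappa\int_A|\nabla v|^2\,\dd\mu.
\]

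For $v\in\calC_b^2(\Omega)$, I would take the recovery sequence $v_h:=\bbL\bbP v\in\text{PC}(\calT^h)$, which converges to $v$ in $L^2(\Omega,\mu)$ by the lemma just above. The task is then to bound $\Tilde{\calF}_h^\mu(v_h,A)=\calF_h^\mu(v^h,A)$. The key step is a pointwise Taylor estimate on the discrete gradient: since $v^h(K)=\intbar_K v\,\dd x$ and $\intbar_K(z-x_K)\,\dd z=0$ by definition of the centroid, a second-order expansion of $v$ around $x_K$ yields
\[
    v^h(L)-v^h(K) = \nabla v(x_K)\cdot(x_L-x_K) + O\bigl(h^2\|D^2 v\|_\infty\bigr),
\]
uniformly in $(K,L)\in\Sigma^h$, and $|x_L-x_K|\le 2h$ by non-degeneracy. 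Squaring and absorbing the cross and remainder terms into $O(h^3)$ gives $|(\dnabla v^h)(K,L)|^2 \le 4h^2|\nabla v(x_K)|^2 + O(h^3)$. Summing with the weights $\kappa^h(K,L)\mu^h(K)$ and invoking \eqref{assumption_kernel_uniform_upper_bound} to bound $h^2\sum_{L\in\calT^h_K}\kappa^h(K,L)\le C_\kappa$ then yields
\[
    \calF_h^\mu(v^h,A) \le 4C_\kappa\sum_{K\in\calT^h|_A}|\nabla v(x_K)|^2\,\mu^h(K) + O(h).
\]

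The remaining sum rewrites as $\int_{A_{\calT^h}}\psi^h\,\dd\hat\mu^h$ where $\psi^h$ equals $|\nabla v(x_K)|^2$ on each cell $K$. Uniform continuity of $\nabla v$ on $\overline\Omega$ gives $\psi^h\to|\nabla v|^2$ uniformly; Proposition~\ref{prop_convergence_of_covering} gives $\Ind_{A_{\calT^h}}\to\Ind_A$ a.e.; and $\dd\hat\mu^h/\dd\calL^d\to\dd\mu/\dd\calL^d$ in $L^1(\Omega)$ by assumption. A dominated convergence argument then drives the right-hand side to $4C_\kappa\int_A|\nabla v|^2\,\dd\mu$, completing the smooth case and hence the lemma. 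The only delicate point is the bookkeeping of the $O(h^3)$ Taylor remainder through the edge sum: after multiplication by $\kappa^h(K,L)\mu^h(K)$ and summing, the uniform kernel bound \eqref{assumption_kernel_uniform_upper_bound} together with $\mu^h(\Omega)=1$ keeps it at $O(h)$, which is precisely why \eqref{assumption_kernel_uniform_upper_bound} is indispensable here.
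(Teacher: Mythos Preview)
Your proposal is correct and follows essentially the same approach as the paper: reduce to $v\in\calC_b^2(\Omega)$ by density and lower semicontinuity of $\calF_{\sup}^\mu$, use $\bbL\bbP v$ as recovery sequence, Taylor-expand the discrete gradient, invoke \eqref{assumption_kernel_uniform_upper_bound}, and pass to the limit via Proposition~\ref{prop_convergence_of_covering} and the $L^1$-convergence of $\hat\mu^h$. The only cosmetic difference is that the paper bounds $|(\dnabla v^h)(K,L)|^2$ via a coupling $\gamma_{KL}$ and Jensen's inequality rather than your centroid-based expansion, but both yield the same leading term and $O(h)$ remainder.
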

\begin{proof}
    For any $v\in \calC_b^2(\Omega)$ and $h>0$, we set  $\hat v^h:=\bbL \bbP v\in \text{PC}(\calT^h)$. Then
    \begin{align*}
        \calF^\mu_h(\bbP v, A) &= \sum_{(K,L)\in\Sigma^h|_A} \left| \bbP v(L) - \bbP v(K) \right|^2 \kappa(K, L) \mu^h(K) \\
        &=  \sum_{(K,L)\in\Sigma^h|_A} \kappa(K, L) \mu^h(K) \left| \int v(y) \mathfrak{m}_L(\dd y) - \int v(x) \mathfrak{m}_K(\dd x) \right|^2 \\
        &\le \sum_{(K,L)\in\Sigma^h|_A} \kappa(K, L) \mu^h(K) \iint \left| v(y) - v(x) \right|^2 \gamma_{KL} (\dd x\, \dd y )\,,
    \end{align*}
    where $\gamma_{KL}$ is a coupling between $\mathfrak{m}_K= \calL^d|_K$ and $\mathfrak{m}_L = \calL^d|_L$. Since $v$ is smooth and $x$ and $y$ are in neighboring cells, it holds that $\left| v(y) - v(x) \right| \leq 2 \left| \nabla v(x) \right| h + O(h^2)$, therefore,
    \begin{align*}
        \calF^\mu_h(\bbP v, A)
        &\leq \sum_{(K,L)\in\Sigma^h|_A} \kappa(K, L) \mu^h(K)  \left( 4 \iint \left| \nabla v(x) \right|^2 h^2 \gamma_{KL} (\dd x\, \dd y ) + O(h^4) \right)
    \end{align*}
    Applying \eqref{assumption_kernel_uniform_upper_bound} yields
    \begin{align*}
        \calF^\mu_h(\bbP v, A) &\le 4 C_\kappa \sum_{K\in\calT^h|_A} \mu^h(K) \left( \int_K \left| \nabla v(x) \right|^2 \mathfrak{m}_K(\dd x) + O(h^2) \right) \\
        &\le 4 C_\kappa \left( \int_{A_{\calT^h}} \left| \nabla v(x) \right|^2 \hat{\mu}^h(\dd x) + O(h^2) \hat{\mu}^h\bigl( A_{\calT^h} \bigr)  \right).
    \end{align*}
    The second term vanishes in the limit $h\to 0$ since $\limsup_{h>0}|\hat\mu^h|(\Omega)<\infty$. For the first term, we also notice that $\Ind_{A_{\calT^h}} \to \Ind_A$ pointwise $\calL^d$-a.e.\ (see Proposition~\ref{prop_convergence_of_covering}) and $|\nabla v|$ is bounded on $\Omega$. Thus, we can apply the generalized dominated convergence theorem \cite[Theorem 1.20]{evans2015measure} to obtain
    \begin{align*}
        \lim_{h\to 0} \int_{A_{\calT^h}} \left| \nabla v \right|^2 \dd \hat\mu^h = \int_A \left| \nabla v \right|^2 \dd \mu.
    \end{align*}
    Altogether, we obtain the following bound for any $v \in \calC_b^2(\Omega)$:
    $$
        \calF^\mu_{\sup}(v, A) \leq \limsup_{h\to 0} \calF^\mu_h(\bbP v, A) \leq 4C_\kappa \int_A \left| \nabla v(x) \right|^2 \mu(\dd x).
    $$
    For arbitrary $v\in H^1(\Omega, \mu)$, we consider a sequence $\{v_n\}_{n\in\N} \subset \calC_b^2(\Omega)$ such that $v_n \to v$ in $H^1(\Omega, \mu)$, then the lower semicontinuity of $\calF_{\sup}$ yields
    $$
        \calF^\mu_{\sup}(v, A) \leq \liminf_{n\to\infty} \calF^\mu_{\sup}(v_n, A) \leq \lim_{n\to\infty}4 C_\kappa \int_A \left| \nabla v_n(x) \right|^2 \mu(\dd x) = 4C_\kappa \int_A \left| \nabla v(x) \right|^2 \mu(\dd x)\,,
    $$
    thereby concluding the proof.
\end{proof}

The properties of $\calF_{\sup}^\mu$ as a set function, namely, inner regularity, subadditivity, and locality, play a crucial role for the integral representation. The proofs of these properties follow the strategy of De Giorgi's cut-off functions argument \cite{maso1993introduction}. For the discrete functionals, the proofs were established in \cite{alicandro2004general, forkert2020evolutionary} and can be applied with minor modification to our settings. For completeness, we include the proofs adapted to our projections and reconstruction procedures in Appendix~\ref{appendix_proofs}.
\begin{proposition}[Properties of $\calF_{\sup}^\mu$] \label{prop_properties_F_sup} The functional $\calF_{\sup}^\mu$ defined in \eqref{eq_general_localized_func} has the following properties:
    \begin{enumerate}[label=(\roman*)]
        \item Inner regularity: For any $v\in H^1(\Omega, \mu)$ and for any $A\in\calO$ it holds that
        $$
            \sup_{A'\ssubset A} \calF^\mu_{\sup}(v, A') = \calF^\mu_{\sup}(v, A);
        $$
        \item Subadditivity: For any $v\in H^1(\Omega, \mu)$ and for any $A, A', B, B' \in \calO$ such that $A'\ssubset A$ and $B'\ssubset B$ it holds that:
        $$
            \calF^\mu_{\sup}(v, A'\cup B') \leq \calF^\mu_{\sup}(v, A) + \calF^\mu_{\sup}(v, B);
        $$
        \item Locality: For any $A\in \calO$ and any $v, \psi \in H^1(\Omega, \mu)$ such that $v=\psi$ $\mu$-a.e. on $A$ there holds
        $$
            \calF^\mu_{\sup}(v, A) = \calF^\mu_{\sup}(v, A).
        $$
    \end{enumerate}
\end{proposition}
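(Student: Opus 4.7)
The plan is the classical De Giorgi slicing argument, adapted to the graph setting where $\dnabla$ couples values across faces. The three properties will not be proved independently: subadditivity (ii) is the engine, and both inner regularity (i) and locality (iii) follow from it together with the Sobolev-type upper bound of Lemma~\ref{lemma_general_upper_bound}. Throughout, I reduce by density in $H^1(\Omega, \mu)$ (Definition~\ref{def_sobolev_space}) to the case $v, \psi \in \calC_b^2(\Omega)$, exploiting that $\calF^\mu_{\sup}(\cdot, A)$ is $L^2(\Omega,\mu)$-lower semicontinuous and bounded above by the Sobolev estimate.

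For subadditivity, fix $A' \ssubset A$ and $B' \ssubset B$, and pick $\Gamma$-$\limsup$ sequences $u_h, w_h \to v$ in $L^2(\Omega, \mu)$ realizing $\calF^\mu_{\sup}(v, A)$ and $\calF^\mu_{\sup}(v, B)$. Fix a large $N \in \N$, nested open layers $A' \ssubset A_1 \ssubset \cdots \ssubset A_N \ssubset A$ inside the buffer $A \setminus A'$, and smooth cut-offs $\varphi_k$ with $\varphi_k \equiv 1$ on $A_{k-1}$, $\varphi_k \equiv 0$ outside $A_k$, and $\|\nabla \varphi_k\|_{L^\infty} \lesssim N$. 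Define the discrete hybrid
\[
    v_h^k := \bbL\bbP(\varphi_k)\, u_h + \bigl(1 - \bbL\bbP(\varphi_k)\bigr)\, w_h \in \text{PC}(\calT^h).
\]
Split $\calF^\mu_h(v_h^k, A' \cup B')$ according to whether a face $(K,L) \in \Sigma^h|_{A'\cup B'}$ lies entirely inside $A_{k-1}$ (where $v_h^k = u_h$), entirely outside $A_k$ (where $v_h^k = w_h$), or touches the transition layer $A_k \setminus A_{k-1}$. The first two bulk contributions are dominated by $\Tilde{\calF}^\mu_h(u_h, A) + \Tilde{\calF}^\mu_h(w_h, B) + o(1)$. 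Expanding $|\dnabla v_h^k(K,L)|^2$ in the transition layer and using $\|\nabla\varphi_k\|_{L^\infty} \lesssim N$, \eqref{assumption_kernel_uniform_upper_bound}, and Lemma~\ref{lemma_general_upper_bound}, the transition contribution is bounded by
\[
    C \int_{(A_k \setminus A_{k-1})_{\calT^h}} \bigl(|\nabla v|^2 + N^2 |\hat u_h - \hat w_h|^2\bigr) \dd \mu + o(1).
\]
Averaging over $k = 1, \ldots, N$ and invoking the pigeonhole principle produces a $k^*(h)$ for which this contribution is at most $\tfrac{1}{N}$ of the total. Letting first $h \to 0$ (so that $\hat u_h - \hat w_h \to 0$ in $L^2(\Omega, \mu)$, absorbing the $N^2$) and then $N \to \infty$ (by dominated convergence, using $|\nabla v|^2 \in L^1(\Omega, \mu)$) yields (ii).

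Inner regularity (i) follows by applying (ii): monotonicity of $\calF^\mu_{\sup}(v,\cdot)$ in the set variable gives the trivial inequality, while for the reverse, given $\varepsilon > 0$ choose $A'' \ssubset A$ so that $\mu(A \setminus \overline{A''})$ is small, cover $A \setminus \overline{A''}$ by $B' \ssubset B$ with $\mu(B)$ small, and use (ii) together with the Sobolev bound $\calF^\mu_{\sup}(v, B) \leq 4C_\kappa \int_B |\nabla v|^2 \dd \mu \to 0$. Locality (iii) — which I read as $\calF^\mu_{\sup}(v, A) = \calF^\mu_{\sup}(\psi, A)$ when $v = \psi$ $\mu$-a.e.\ on $A$, the displayed identity being a typographical repetition — follows from the analogous hybrid construction: given a recovery sequence $\psi_h$ for $\psi$ on an enlargement $A''$ of $A$, glue it to $\bbP v$ (a trivial approximation of $v$) through a cut-off, using (i) to shrink the buffer $A'' \setminus A$ and the Sobolev bound to absorb its contribution.

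The main obstacle is the transition layer in step (ii): a face $(K, L) \in \Sigma^h$ with $K$ on one side of $\partial A_k$ and $L$ on the other produces $\dnabla v_h^k(K, L)$ that mixes $u_h(K)$ with $w_h(L)$, so the pointwise Leibniz-type expansion available in the continuous $H^1$ setting does not apply directly. I would handle this by thickening the layer to its cell-hull $(A_k \setminus A_{k-1})_{\calT^h}$ (recall Proposition~\ref{prop_convergence_of_covering}), and controlling the number of straddling faces through the uniform cardinality bound of Remark~\ref{rem_cardinality} together with the non-degeneracy \eqref{ass:tessellation} and the kernel bound \eqref{assumption_kernel_uniform_upper_bound}. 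The resulting $O(h)$-width boundary error vanishes for fixed $N$ as $h \to 0$, so the iterated limit $h \to 0$, $N \to \infty$ still closes the argument.
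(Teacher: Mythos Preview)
Your plan is the same De Giorgi cut-off/slicing argument the paper uses, and your reorganization --- proving (ii) first and then deducing (i) from (ii) plus the Sobolev upper bound --- is legitimate; the paper instead runs the slicing construction three separate times. Your identification of the straddling-face issue and its resolution via thickening to the cell-hull also matches what the paper does with the sets $G_i^{N,h}$.

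There is, however, one concrete gap in your transition-layer estimate for (ii). You invoke Lemma~\ref{lemma_general_upper_bound} to bound the contribution of $|\dnabla u_h|^2$ and $|\dnabla w_h|^2$ on the layer by $\int_{(A_k\setminus A_{k-1})_{\calT^h}}|\nabla v|^2\,\dd\mu$, but that lemma controls $\calF^\mu_{\sup}$ --- the value along the \emph{optimal} sequence, essentially $\bbL\bbP v$ --- not $\Tilde\calF^\mu_h(u_h,\cdot)$ for a recovery sequence $u_h$ you have already fixed from the outset. For an arbitrary recovery sequence there is no pointwise bound of $|\dnabla u_h|^2$ by $|\nabla v|^2$. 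The correct bound, which the paper uses, simply keeps the layer term as $\Tilde\calF^\mu_h(u_h, A_{k}\setminus A_{k-1})+\Tilde\calF^\mu_h(w_h, A_{k}\setminus A_{k-1})$; after averaging over the $N$ disjoint layers this is dominated by $\tfrac{1}{N}\bigl(\Tilde\calF^\mu_h(u_h, A\setminus A')+\Tilde\calF^\mu_h(w_h, A\setminus A')\bigr)$, whose $\limsup_{h\to 0}$ is finite (since $u_h$, $w_h$ are recovery sequences on the larger sets $A$, $B$) and which then vanishes as $N\to\infty$. With this correction your argument closes. Incidentally, the density reduction to $\calC_b^2$ you announce at the start is unnecessary --- the paper works directly with $v\in H^1(\Omega,\mu)$ --- and for (ii) it is not obviously valid, since only lower semicontinuity of $\calF^\mu_{\sup}(\cdot,A)$ is available and you would need upper semicontinuity along the approximating sequence on the right-hand side.
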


A direct consequence of Lemma~\ref{lemma_general_upper_bound} and Proposition~\ref{prop_properties_F_sup} is the integral representation of $\calF_{\sup}^\mu$ \cite{maso1993introduction}.

\begin{proposition}[Properties of the $\Gamma$-limit]\label{prop_properties_Gamma_limit} Let $\calF^\mu : L^2(\Omega, \mu)\times\calO \to [0, +\infty]$ be $\overline{\Gamma}$-limit of $(\Tilde{\calF}^\mu_h)$. For every $v\in H^1(\Omega, \mu)$ and every $A\in\calO$ the following properties hold:
    \begin{enumerate}[label=(\roman*)]
        \item $\calF^\mu_{\sup}(v, A) = \calF^\mu(v, A)$;
        \item $\calF^\mu(v + c, A) = \calF^\mu(v, A)$ for every $c\in \R$;
        \item $\calF^\mu(v, \cdot)$ is the restriction to $\calO$ of a Radon measure;
        \item $\calF^\mu( \cdot, A)$ is $L^2(\Omega, \mu)$-lower semicontinuous;
        \item $\calF^\mu( \cdot, A)$ is local, which means $\calF^\mu(v, A) = \calF^\mu(w, A)$ if $v=w$ $\mu$-a.e. on $A$;
        \item $\calF^\mu(v, A)$ satisfies the growth condition:
        $$
            0 \leq \calF^\mu(v, A) \leq C \int_A \left| \nabla v \right|^2 \dd \mu,
        $$
        with some $C > 0$.
        \item $\calF^\mu(v, A)$ has the integral representation
            \begin{equation*}
                \calF^\mu(v, A) = \int_A f \left( x, \nabla v \right) \dd \mu
            \end{equation*}
            where $v|_A \in H^1 (A,\mu)$.
    \end{enumerate}
\end{proposition}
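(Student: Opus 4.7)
The proof combines the structural properties of $\calF^\mu_{\sup}$ established in Proposition~\ref{prop_properties_F_sup} and Lemma~\ref{lemma_general_upper_bound} with the De Giorgi--Letta measure criterion and the Buttazzo--Dal Maso integral representation theorem, following the scheme of \cite[Chapters~16--20]{maso1993introduction}. Items (i), (ii), (iv), (v), (vi) are the quick ones. For (i), by the definition of the $\overline{\Gamma}$-limit, $\calF^\mu$ is the inner-regular envelope of both $\calF^\mu_{\inf}$ and $\calF^\mu_{\sup}$; since $\calF^\mu_{\sup}$ is already inner regular by Proposition~\ref{prop_properties_F_sup}(i), its envelope coincides with itself, giving $\calF^\mu = \calF^\mu_{\sup}$. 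Item (ii) follows from the structure of the discrete functional: only differences $(\dnabla v^h)(K, L) = v^h(L) - v^h(K)$ appear in $\Tilde{\calF}^\mu_h$, so shifting any admissible sequence by a constant preserves its value, and the invariance transfers to the $\Gamma$-limit. Item (iv) is the standard lower semicontinuity of any $\Gamma$-limit w.r.t.\ the ambient $L^2(\Omega, \mu)$-topology. Item (v) follows from (i) and Proposition~\ref{prop_properties_F_sup}(iii), and (vi) combines (i) with Lemma~\ref{lemma_general_upper_bound}.

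For the measure property (iii), I would invoke the De Giorgi--Letta criterion: it suffices to verify that $\calF^\mu(v, \cdot)$ is increasing, inner regular, subadditive on $\calO$, and superadditive on disjoint open sets. Monotonicity is evident from the definition; inner regularity follows from (i) and Proposition~\ref{prop_properties_F_sup}(i); and subadditivity is Proposition~\ref{prop_properties_F_sup}(ii) combined with (i). Superadditivity on disjoint $A, B \in \calO$ is the extra ingredient: given $A' \ssubset A$ and $B' \ssubset B$, for $h$ small enough the restricted face sets $\Sigma^h|_{A'}$ and $\Sigma^h|_{B'}$ are disjoint, so $\calF^\mu_h(v^h, A' \cup B') = \calF^\mu_h(v^h, A') + \calF^\mu_h(v^h, B')$ for every admissible $v^h$; passing to $\liminf_{h\to 0}$ along a recovery sequence for $A' \cup B'$ and then taking the supremum over $A' \ssubset A$ and $B' \ssubset B$ yields $\calF^\mu(v, A \cup B) \geq \calF^\mu(v, A) + \calF^\mu(v, B)$.

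The integral representation (vii) is the main technical step and the hard part. With (iii)--(vi) in place, $\calF^\mu$ is a Radon measure in the set argument, local and $L^2$-lower semicontinuous in $v$, invariant under addition of constants, and satisfies a quadratic growth bound. Under these hypotheses, the Buttazzo--Dal Maso integral representation theorem yields a Carath\'eodory integrand $f: \Omega \times \R^d \to [0, \infty)$ with $f(x, \xi) \leq C |\xi|^2$ such that $\calF^\mu(v, A) = \int_A f(x, \nabla v) \dd \mu$ for every $v \in H^1(\Omega, \mu)$. The principal obstacle is that the classical formulation of this theorem is given for Sobolev functionals against the Lebesgue measure, so its adaptation to the weighted space $H^1(\Omega, \mu)$ requires exploiting the absolute continuity and boundedness of $\dd \mu / \dd \calL^d$ imposed on the reference measure in Section~\ref{sec_general_gamma_convergence}; once verified, the representation is obtained and the proof is complete.
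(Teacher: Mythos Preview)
Your proposal is correct and follows essentially the same route as the paper: items (i), (ii), (iv)--(vi) are dispatched exactly as you indicate; (iii) is obtained via the De Giorgi--Letta criterion, with superadditivity handled by observing that for disjoint $A,B$ with positive separation the face sets split for $h$ small (the paper phrases the zero-distance case directly via inner regularity rather than through $A'\ssubset A$, $B'\ssubset B$, but this is the same idea); and (vii) is obtained from the integral representation theorem in \cite[Theorem~20.1]{maso1993introduction}. One minor correction: the general hypotheses on $\mu$ in Section~\ref{sec_general_gamma_convergence} only impose $\dd\mu/\dd\calL^d\in L^1(\Omega)$, not boundedness, so your justification for adapting the representation theorem to the weighted setting should not rely on a bounded density---but since the cited theorem from \cite{maso1993introduction} is already formulated for general Radon measures, this obstacle does not actually arise.
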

\begin{proof}
     (i) Since $\calF_{\sup}^\mu$ is inner regular, we conclude by definition that $\calF = \calF_{\sup}^\mu$.
     
     (ii) It is easily seen that the equality holds for any $h>0$ and arbitrary $\mu^h, v^h, A$, and $c \in \R$.
     
     To conclude (iii) it is enough to show that $\calF_{\sup}^\mu$ is subadditive, superadditive, and inner regular on $\calO$ (see, for instance, \cite[Theorem 14.23]{maso1993introduction}). Proposition~\ref{prop_properties_F_sup} provides subadditivity and inner regularity, and here we only comment on superadditivity. By definition of $\calF_h^\mu$, for any $A, B\in \calO$ such that $A\cap B = \emptyset$ and $\text{dist}(A, B) > 0$ there exists small enough $h_0>0$ such that for any $h<h_0$:
     $$
        \calF^\mu_h (v, A\cup B) \geq \calF^\mu_h (v, A) + \calF^\mu_h (v, B).
     $$
     If $\text{dist}(A, B) = 0$, then the required property follows from inner regularity.
     
     Properties (iv)-(vi) directly follow from (i) and Proposition~\ref{prop_properties_F_sup}. 

     (vii) Properties (ii)-(vi) allow to conclude the integral representation \cite[Theorem 20.1]{maso1993introduction}.
\end{proof}

\begin{remark}
    The integrand $f(x, \xi)$ can be obtained for all $\xi\in\R^d$ and a.e.\ $x\in\Omega$ as
    \begin{equation}\label{eq_integrand}
        f(x, \xi) = \frac{\calF^\mu(\varphi_\xi, Q_\varepsilon(x))}{|Q_\varepsilon(x)|},
    \end{equation}
    where $\varphi_\xi(z) = \langle\xi, z\rangle$ (for details see \cite[Remark 4.5]{braides2006handbook}).
\end{remark}

\subsubsection*{Upper bound for the integral representation}
We derive an upper bound for $f$ using the representation \eqref{eq_integrand}. For a fixed $\xi\in \R^d$ the projection of $\varphi_\xi$ on $\calT^h$ is 
$$
    \bbP \varphi_\xi = \sum_{K\in\calT^h} \langle\xi, x_K\rangle \, \Ind_K \quad \text{with} \quad x_K = \intbar_K x \dd x.
$$
Substituting $\bbP \varphi_\xi$ into $\calF^\mu_h$ yields
\begin{align*}
    \calF^\mu_h (\bbP \varphi_\xi, A)
    &= \sum_{(K, L)\in\Sigma^h|_A} \left| \langle\xi, x_L - x_K \rangle \right|^2 \kappa^h(K, L) \mu^h(K) \\
    &= \sum_{K\in\calT^h|_A} \Big\langle\xi , \sum_{L\in\calT^h_K|_A} \kappa^h(K, L) (x_L - x_K) \otimes (x_L - x_K) \xi \Big\rangle \, \mu^h(K) \\
    &= \sum_{K\in\calT^h|_A} \int_K \langle \xi , \T^h(x)\, \xi \rangle \, \hat{\mu}^h (\dd x) = \Big\langle \xi , \int_\Omega \T^h(x) \Ind_{A_{\calT^h}} \hat{\mu}^h (\dd x) \, \xi \Big\rangle,
\end{align*}
where we denoted by $\T^h$ the tensor
\begin{equation}\label{eq_def_tensor}
    \T^h(x) := \sum_{K\in\calT^h} \Ind_K(x) \sum_{L\in\calT^h_K} \kappa^h(K, L) (x_L - x_K) \otimes (x_L - x_K).
\end{equation}

When passing $h\to 0$, we expect $\T^h$ converge to the diffusion tensor, therefore, we establish a number of useful properties of $\T^h$.
\begin{lemma}[Properties of $\T^h$]\label{lemma_properties_tensor} The diffusion tensor \eqref{eq_def_tensor} has the following properties:
    \begin{enumerate}[label=(\roman*)]
        \item $\T^h(x)$ is symmetric and positive-definite for any $x\in \Omega$; 
        
        \item $\{\T^h\}_{h>0}$ is bounded in $ L^\infty (\Omega; \R^{d\times d})$: 
        \[
            \text{for all the components $\T^h_{ij}$ it holds that  $\displaystyle \sup_{h>0} \|\T^h_{ij}\|_{L^\infty(\Omega)} < \infty$;}
        \]
        \item $\{\T^h\}_{h>0}$ has a weakly-$*$ limit in the $\sigma(L^\infty, L^1)$ topology, i.e.\ there exist a subsequence and a tensor $\T\in L^\infty (\Omega; \R^{d\times d})$ such that
        $$
            \lim_{h\to 0} \int_{\Omega} \T^h_{ij} f \dd x = \int_{\Omega}\T_{ij} f \dd x \qquad \text{for all } f\in L^1(\Omega).
        $$
        
    \end{enumerate}
\end{lemma}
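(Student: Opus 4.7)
The plan is to tackle the three claims in order of increasing depth, with (i) being the only one that requires any real geometric input. For symmetry in (i), I would just note that each summand $(x_L - x_K)\otimes(x_L - x_K)$ is a symmetric rank-one tensor and $\kappa^h(K,L)\ge 0$, which also immediately yields positive semi-definiteness
\[
    \langle \xi, \T^h(x)\xi\rangle = \sum_{L\in\calT^h_K} \kappa^h(K,L)\,\langle \xi, x_L - x_K\rangle^2 \ge 0\qquad \forall \xi\in\R^d,\; x\in K.
\]
For strict positive-definiteness, I would combine two facts. First, the lower bound in \eqref{assumption_nu} together with $|(K|L)|\ge \zeta h^{d-1}$ and $\pi^h(K)\ge \pi_{\min}|K|>0$ forces $\kappa^h(K,L) > 0$ for every $(K,L)\in\Sigma^h$. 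Second, I would show that $\{x_L - x_K : L\in\calT^h_K\}$ spans $\R^d$ by a geometric argument exploiting non-degeneracy: any ray from $x_K$ exits $K$ through some face $(K|L)$; the exit point lies within $h$ of $x_L$ and at least $\zeta h$ from $x_K$ by the inner-ball condition, so by running this argument along a basis of $\R^d$, the neighbor centroids cannot lie in a proper hyperplane through $x_K$.

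For (ii), the bound is a routine computation on top of \eqref{assumption_kernel_uniform_upper_bound}. Since both centroids lie in neighboring cells of diameter at most $h$, we have $|x_L - x_K|\le 2h$, so for every component $i,j\in\{1,\ldots,d\}$ and every $x\in K$,
\[
    |\T^h_{ij}(x)| \le \sum_{L\in\calT^h_K} \kappa^h(K,L)\, |x_L - x_K|^2 \le 4h^2 \sum_{L\in\calT^h_K} \kappa^h(K,L) \le 4 C_\kappa,
\]
yielding the uniform $L^\infty$-bound claimed.

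Part (iii) is then an immediate corollary via Banach--Alaoglu: each component $\T^h_{ij}$ is uniformly bounded in $L^\infty(\Omega)=(L^1(\Omega))^\ast$, so extracting a diagonal subsequence that converges weakly-$\ast$ in each component produces a limit tensor $\T\in L^\infty(\Omega; \R^{d\times d})$. The main obstacle is (i): the spanning-directions argument, while geometrically believable, is the only step that genuinely couples the combinatorics of $\Sigma^h$ with the metric structure of the cells, and the non-degeneracy parameter $\zeta$ and the diameter bound $h$ must be used together. The other two parts are essentially bookkeeping on top of \eqref{assumption_kernel_uniform_upper_bound}. I note that (iii) as stated makes no quantitative claim about the limit $\T$ being positive-definite; any such lower bound would require the uniform strict positivity in (i) to survive the weak-$\ast$ passage, which is a separate matter deferred to the identification step.
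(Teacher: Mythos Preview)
Your proposal is correct and follows essentially the same route as the paper: the paper dispatches (i) in a single sentence (``Symmetry and positive-definiteness follow directly from the definition''), proves (ii) via the componentwise bound $|x_L^i - x_K^i|\le 2h$ combined with \eqref{assumption_kernel_uniform_upper_bound}, and obtains (iii) by weak-$*$ compactness from $L^\infty = (L^1)^*$. Your treatment of (ii) and (iii) is identical in spirit (your constant $4C_\kappa$ is in fact the correct one; the paper's $2C_\kappa$ appears to drop a factor of $2$).

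The only place you go beyond the paper is in (i), where you attempt to justify \emph{strict} positive-definiteness via a spanning argument for $\{x_L - x_K\}_{L\in\calT^h_K}$. The paper does not supply this argument at all, so you are doing more, not less. That said, your sketch is not quite airtight as written: knowing that the exit point $p$ of a ray in direction $\xi$ satisfies $|p - x_K|\ge \zeta h$ and $|p - x_L|\le h$ only yields $\langle\xi, x_L - x_K\rangle \ge (\zeta - 1)h$, which is negative. To close the gap you would need to use that $L$ lies entirely on the far side of the supporting hyperplane of $K$ through the face $(K|L)$, so that the centroid $x_L$ is genuinely separated from $x_K$ in the outward normal direction. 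This is fixable, but since the paper does not use strict positivity of $\T^h$ anywhere (uniform ellipticity of the limit $\T$ is established separately via the Fisher-information lower bound), it is also not essential.
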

\begin{proof}
    (i) Symmetry and positive-definiteness follow directly from the definition. 
    
    (ii) Fix any $x\in\Omega$ and consider the tensor $\mathbb{T}^h$ component-wise:
    $$
        \T^h_{ij} (x) = \sum_{L \in \calT^h_K} \kappa^h(K, L) (x_L^i - x_K^i)(x_L^j - x_K^j).
    $$
    The bound $|x_L^i - x_K^i| \leq 2h$ and \eqref{assumption_kernel_uniform_upper_bound} gives $| \T^h_{ij} (x) | \leq 2 h^2 \sum_{L \in \calT^h_K} \kappa^h(K, L) \leq 2 C_\kappa$. Consequently, $\sup_{h>0} \|\T^h_{ij}\|_{L^\infty(\Omega)} \leq 2 C_\kappa$.
    
    (iii) The weak-$*$ convergence follows from (ii) and the duality of $L^1$ and $L^\infty$ (see for instance \cite[Theorem 8.5, Examples 8.6(1)]{alt2016linear}).
\end{proof}



The next lemma provides an upper bound for the integral representation of $\calF^\mu$
\begin{lemma}\label{lemma_limit_upper_bound}
    Let $\calF^\mu: L^2(\Omega, \mu)\times\calO \to [0, +\infty]$ be the $\overline{\Gamma}$-limit of $\{\Tilde{\calF}^\mu_h\}_{h>0}$, then for every $v\in H^1(\Omega, \mu)$ and every $A\in\calO$,
    $$
        \calF^\mu(v, A) \leq \int_A \langle\nabla v, \T \nabla v \rangle \dd \mu\, ,
    $$
    where $\T$ is defined in Lemma~\ref{lemma_properties_tensor}(iii). 
\end{lemma}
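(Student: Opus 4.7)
The plan is to produce an explicit recovery sequence for smooth test functions, compute its discrete energy via Taylor expansion, pass to the limit using the weak-$*$ convergence of $\T^h$ from Lemma~\ref{lemma_properties_tensor}(iii), and finally extend to $H^1(\Omega,\mu)$ by density and the lower semicontinuity of $\calF^\mu(\cdot,A)$.

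First, for $v\in \calC_b^2(\Omega)$, I would take the recovery sequence $v^h:=\bbP v$, so that the preceding lemma gives $\hat v^h = \bbL v^h \to v$ in $L^2(\Omega,\mu)$. Hence $\calF^\mu(v,A)=\calF^\mu_{\sup}(v,A)\le \limsup_{h\to 0}\calF^\mu_h(\bbP v,A)$. A second-order Taylor expansion at the barycenter $x_K$, using $\intbar_K (x-x_K)\,\dd x = 0$, yields
$$\bbP v(L)-\bbP v(K) = \nabla v(x_K)\cdot(x_L-x_K) + O(h^2),$$
and consequently $|\bbP v(L)-\bbP v(K)|^2 = |\nabla v(x_K)\cdot(x_L-x_K)|^2 + O(h^3)$, with constants depending on $\|v\|_{\calC^2}$. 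Summing against $\kappa^h(K,L)\mu^h(K)$ over $\Sigma^h|_A$ and using \eqref{assumption_kernel_uniform_upper_bound}, the $O(h^3)$ remainders contribute a total of order $h$ and hence vanish. Introducing the piecewise-constant function $g^h(x):=\nabla v(x_K)$ for $x\in K$, the leading term assembles into
$$\calF^\mu_h(\bbP v,A) \le \int_{A_{\calT^h}} \langle g^h(x), \T^h(x) g^h(x)\rangle\,\dd\hat\mu^h(x) + o(1).$$

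Next, since $v\in \calC_b^2$, we have $g^h \to \nabla v$ uniformly, so $g^h$ may be replaced by $\nabla v$ at vanishing cost (exploiting the $L^\infty$-bound on $\T^h$ from Lemma~\ref{lemma_properties_tensor}(ii)). Rewriting the remaining integral against Lebesgue measure,
$$\int_\Omega \Bigl(\Ind_{A_{\calT^h}}\tfrac{\dd\hat\mu^h}{\dd\calL^d}\Bigr)(x)\, \langle \nabla v(x), \T^h(x)\nabla v(x)\rangle\,\dd x,$$
I would pass to the limit by combining (i) the strong $L^1(\Omega)$-convergence $\Ind_{A_{\calT^h}}\tfrac{\dd\hat\mu^h}{\dd\calL^d} \to \Ind_A\tfrac{\dd\mu}{\dd\calL^d}$, which follows from the $L^1$-convergence hypothesis on the reference density, Proposition~\ref{prop_convergence_of_covering}, and dominated convergence; with (ii) the weak-$*$ convergence of $\langle\nabla v,\T^h\nabla v\rangle$ to $\langle\nabla v,\T\nabla v\rangle$ in $L^\infty(\Omega)$, obtained component-wise from Lemma~\ref{lemma_properties_tensor}(iii) and the $L^\infty$-boundedness of $\nabla v$. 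The standard strong-times-weak-$*$ principle then yields
$$\limsup_{h\to 0}\calF^\mu_h(\bbP v,A) \le \int_A\langle\nabla v,\T\nabla v\rangle\,\dd\mu,$$
which combined with the first step proves the bound for $v\in \calC_b^2(\Omega)$.

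Finally, to extend to $v\in H^1(\Omega,\mu)$, I would approximate by $v_n\in \calC_b^2(\Omega)$ with $v_n\to v$ in $H^1(\Omega,\mu)$ (which is possible by Definition~\ref{def_sobolev_space}). The $L^2(\Omega,\mu)$-lower semicontinuity of $\calF^\mu(\cdot,A)$ from Proposition~\ref{prop_properties_Gamma_limit}(iv), together with the $L^\infty$-boundedness of $\T$, gives
$$\calF^\mu(v,A) \le \liminf_{n\to\infty} \calF^\mu(v_n,A) \le \lim_{n\to\infty}\int_A\langle\nabla v_n,\T\nabla v_n\rangle\,\dd\mu = \int_A\langle\nabla v,\T\nabla v\rangle\,\dd\mu,$$
which is the asserted bound. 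The main obstacle is the limit passage in the third step: one must carefully combine weak-$*$ $L^\infty$ convergence of $\T^h$ with the strong $L^1$-convergence of the reference density, and at the same time control the boundary corrections arising from restricting pairs to $\Sigma^h|_A$ versus summing over all cells in $\calT^h|_A$ together with their neighbors. These corrections should produce only an $o(1)$ error thanks to the non-degeneracy of $\{\calT^h\}$ and \eqref{assumption_kernel_uniform_upper_bound}, but require care.
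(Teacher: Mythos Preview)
Your proof is correct and follows a genuinely different route from the paper's. The paper does \emph{not} construct a recovery sequence for general smooth $v$; instead it exploits the integral representation already established in Proposition~\ref{prop_properties_Gamma_limit}(vii) together with the explicit formula \eqref{eq_integrand} for the integrand. Because $\bbP\varphi_\xi(K)=\langle\xi,x_K\rangle$ exactly for the linear function $\varphi_\xi$, the paper computes $\calF^\mu_h(\bbP\varphi_\xi,Q_\varepsilon(x))=\langle\xi,\int \T^h\Ind_{[Q_\varepsilon(x)]_{\calT^h}}\,\dd\hat\mu^h\,\xi\rangle$ with no error term, passes to the limit via Lemma~\ref{lemma_properties_tensor}(iii) and Proposition~\ref{prop_convergence_of_covering}, and then reads off $f(x,\xi)\le\langle\xi,\T(x)\xi\rangle\,\dd\mu/\dd\calL^d$ directly from \eqref{eq_integrand}. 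The bound for general $v\in H^1(\Omega,\mu)$ is then immediate from the integral representation, with no density argument needed.

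Your approach trades the representation theorem for a hands-on Taylor expansion and a density/lower-semicontinuity step. This is more elementary in that it uses only $\calF^\mu=\calF^\mu_{\sup}$ and lower semicontinuity from Proposition~\ref{prop_properties_Gamma_limit}, not the full integral representation; indeed your argument would still go through without knowing that $\calF^\mu$ is a local integral functional. The paper's route is shorter given that the representation formula is already in hand (and is needed anyway for the matching lower bound in Section~\ref{sec_fisher}). Your concern about the boundary discrepancy between $\Sigma^h|_A$ and the full inner sum defining $\T^h$ is handled automatically by the inequality sign, since all summands are nonnegative; this is consistent with how the paper treats the analogous point in Lemma~\ref{lemma_general_upper_bound}. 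Incidentally, your Taylor-expansion strategy is essentially the same technique the paper uses later in Lemma~\ref{lemma_limsup_dualdiss} for the dual dissipation potential.
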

\begin{proof}
    Lemma~\ref{lemma_properties_tensor}(iii) gives, in particular, that if $\T^h \rightharpoonup^* \T$ in $L^\infty(\Omega)$ for some (not relabelled) subsequence, then
    $$
        \lim_{h\to 0} \int_\Omega \T^h(x) \Ind_{A_{\calT^h}} \hat{\mu}^h (\dd x) = \int_A \T(x) \mu(\dd x),
    $$
    which holds due to Proposition~\ref{prop_convergence_of_covering} and the generalized dominated convergence theorem. Therefore,
    $$
        \calF^\mu(\varphi_\xi, Q_\varepsilon(x)) \leq \liminf_{h\to 0} \Tilde{\calF}^\mu_h (\bbP \varphi_\xi, Q_\varepsilon(x)) \leq \Big\langle \xi, \int_{Q_\varepsilon(x)} \T(z) \mu (\dd z) \,\xi \Big\rangle.
    $$
    The representation formula \eqref{eq_integrand} yields
    $$
        f(x,\xi) \le \Big\langle \xi, ~\lim_{\varepsilon\to 0}  \intbar_{Q_\varepsilon(x)} \T(z) \mu (\dd z)\, \xi \Big\rangle= \Big\langle \xi,~\T(x) \frac{\dd \mu}{\dd \calL^d} (x)\, \xi \Big\rangle.
    $$
    Thus, for and $v\in H^1(\Omega, \mu)$ and $A\in\calO$,
    $$
        \calF^\mu (v, A) \leq \int_A \langle \nabla v, \T \nabla v\rangle \, \dd \mu \,,
    $$
    as required.
\end{proof}

\subsection{Dual dissipation potential}\label{sec_dissipation}

Recall that the dual dissipation potential has the form:
\begin{align*}
    \calR^*_h (\rho^h, \dnabla \varphi^h) = \frac{1}{2} \sum_{(K,L)\in\Sigma^h} \Psi^*\left( (\dnabla \varphi^h) (K, L) \right) \sqrt{u^h(K) u^h(L)} \,\vartheta^h(K, L), \quad \text{where } u^h = \frac{\dd \rho^h}{\dd \pi^h},
\end{align*}
with $\Psi^*(\xi) = 4 \left( \cosh{(\xi/2)} - 1 \right)$.

In Lemma~\ref{lemma_limit_upper_bound}, we derived an upper bound for the integral representation of the $\overline{\Gamma}$-limit of $\{\Tilde{\calF}^\rho_h\}_{h>0}$. We now prove that the same bound applies asymptotically to $\calR^*_h$ for some specific choice of approximating functions $\varphi^h$.
\begin{lemma}\label{lemma_limsup_dualdiss}
     Let $\varphi\in \calC_b^2(\Omega)$ and assume that $\{\rho^h\}_{h>0}$ is a family of probability measures on $\calT^h$ such that $\dd \hat{\rho}^h / \dd \calL^d\to \dd \rho / \dd \calL^d$ in $L^1(\Omega)$ (cf.\ Section~\ref{sec_general_gamma_convergence}(ii)). Moreover, let $\{\varphi^h\}_{h>0}$ be the family of discrete functions on $\{\calT^h\}_{h>0}$ defined by $\varphi^h(K) := \varphi(x_K)$ for $K\in\calT^h$. 
     
     Then $\bbL \varphi^h \to \varphi$ in $L^2(\Omega, \rho)$, and
     \begin{equation*}
        \limsup_{h\to 0} \calR^*_h(\rho^h, \dnabla\varphi^h) \leq \frac{1}{4} \int_\Omega \langle \nabla\varphi, ~ \T \nabla\varphi \rangle \dd \rho \,.
    \end{equation*}
\end{lemma}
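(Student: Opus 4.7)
The $L^2(\Omega,\rho)$ convergence $\bbL\varphi^h \to \varphi$ follows immediately from the uniform bound: for $x\in K$, $(\bbL\varphi^h)(x) = \varphi(x_K)$ with $|x - x_K| \leq h$, so $|(\bbL\varphi^h)(x) - \varphi(x)| \leq \|\nabla\varphi\|_{L^\infty(\Omega)}\, h$ on all of $\Omega$. Since $\rho\in\calP(\Omega)$ (as the $L^1$-limit of probability densities $\dd\hat\rho^h/\dd\calL^d$), this yields $\|\bbL\varphi^h - \varphi\|_{L^2(\Omega,\rho)} \leq \|\nabla\varphi\|_{L^\infty(\Omega)}\, h \to 0$.

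For the $\limsup$ estimate, my plan is to reduce $\calR^*_h(\rho^h,\dnabla\varphi^h)$ to a quadratic form in the tensor $\T^h$ defined in \eqref{eq_def_tensor} and then invoke its weak-$*$ convergence to $\T$ from Lemma~\ref{lemma_properties_tensor}. Fix $\varepsilon,\delta>0$. I would apply three approximations in sequence. First, since $|(\dnabla\varphi^h)(K,L)| = |\varphi(x_L)-\varphi(x_K)| \leq 2h\|\nabla\varphi\|_{L^\infty}$ uniformly in $(K,L)\in\Sigma^h$ and $\Psi^*(\xi) = \xi^2/2 + O(\xi^4)$ near $\xi=0$, for $h$ small enough $\Psi^*((\dnabla\varphi^h)(K,L)) \leq \tfrac{1+\varepsilon}{2}|(\dnabla\varphi^h)(K,L)|^2$. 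Second, I would apply AM-GM, $\sqrt{u^h(K)u^h(L)} \leq (u^h(K)+u^h(L))/2$; since $|(\dnabla\varphi^h)(K,L)|^2$ and $\vartheta^h(K,L)$ are symmetric under $K\leftrightarrow L$, relabelling indices in the resulting sum gives
\[
    \calR^*_h(\rho^h,\dnabla\varphi^h) \leq \tfrac{1+\varepsilon}{4}\sum_{(K,L)\in\Sigma^h}|(\dnabla\varphi^h)(K,L)|^2\, u^h(K)\,\vartheta^h(K,L).
\]
Third, a second-order Taylor expansion of $\varphi$ at $x_K$ yields $|(\dnabla\varphi^h)(K,L)|^2 \leq (1+\delta)|\nabla\varphi(x_K)\cdot(x_L-x_K)|^2 + C_\delta h^4$; summed against $u^h(K)\vartheta^h(K,L) = \rho^h(K)\kappa^h(K,L)$, the $h^4$ remainder contributes at most $C_\delta h^4 \cdot C_\kappa/h^2 = O(h^2)$ by \eqref{assumption_kernel_uniform_upper_bound} and thus vanishes.

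The dominant term then rewrites, using the definition of $\T^h$, as
\[
    \sum_{(K,L)\in\Sigma^h}|\nabla\varphi(x_K)\cdot(x_L-x_K)|^2\,u^h(K)\vartheta^h(K,L) = \int_\Omega \langle\nabla\varphi_h(x),\T^h(x)\nabla\varphi_h(x)\rangle\,\dd\hat\rho^h(x),
\]
where $\nabla\varphi_h := \sum_K \Ind_K \nabla\varphi(x_K)$. The main technical step, where I expect the greatest care is needed, is passing to the limit in this triple product: $\nabla\varphi_h \to \nabla\varphi$ uniformly (using that $\nabla\varphi$ is Lipschitz on the convex set $\Omega$), $\T^h \rightharpoonup^* \T$ only weakly-$*$ in $L^\infty(\Omega;\R^{d\times d})$ by Lemma~\ref{lemma_properties_tensor}, and $\dd\hat\rho^h/\dd\calL^d \to \dd\rho/\dd\calL^d$ strongly in $L^1(\Omega)$. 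I would handle this via a standard three-piece splitting exploiting weak-$*$/strong duality: the discrepancies $\nabla\varphi_h - \nabla\varphi$ and $\dd\hat\rho^h/\dd\calL^d - \dd\rho/\dd\calL^d$ are absorbed using the uniform $L^\infty$-bound on $\T^h$ together with the respective strong convergence, while the remaining piece pairs the weakly-$*$ convergent $\T^h - \T$ against the $L^1$-test function $\nabla\varphi\otimes\nabla\varphi\cdot\dd\rho/\dd\calL^d$. This produces the limit $\int_\Omega\langle\nabla\varphi,\T\nabla\varphi\rangle\,\dd\rho$; letting $\varepsilon,\delta\to 0$ concludes.
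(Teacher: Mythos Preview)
Your proof is correct and follows essentially the same route as the paper: expand $\Psi^*$ quadratically, bound the geometric mean by the arithmetic mean and symmetrize, Taylor-expand $\dnabla\varphi^h$, rewrite the leading term via $\T^h$, and pass to the limit using the weak-$*$ convergence of $\T^h$ against the strongly convergent density. The only cosmetic difference is that the paper replaces $\nabla\varphi(x_K)$ by $\intbar_K\nabla\varphi(x)\,\dd x$ and applies Jensen's inequality to land directly on $\int_\Omega\langle\nabla\varphi,\T^h\nabla\varphi\rangle\,\dd\hat\rho^h$, whereas you keep the piecewise-constant $\nabla\varphi_h$ and handle the discrepancy $\nabla\varphi_h-\nabla\varphi$ via uniform convergence in your three-piece splitting; both arrive at the same limit.
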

\begin{proof} 
We first observe that $\bbL \varphi^h =: \hat{\varphi}^h \to \varphi$ in $L^2(\Omega, \rho)$. This follows directly from estimate
\begin{align*}
    \int_\Omega |\hat{\varphi}^h(x) - \varphi(x)|^2 \rho(\dd x)
    &= \sum_{K\in\calT^h} \int_K |\varphi(x_K) - \varphi(x)|^2 \rho(\dd x) \\
    &\leq \sum_{K\in\calT^h} \int_K \left(\|\nabla \varphi \|^2_{L^\infty} h^2 + o(h^2) \right) \rho(\dd x) \\
    &\leq \|\nabla \varphi \|^2_{L^\infty} h^2 + o(h^2).
\end{align*}
Now, we show that $\{\hat{\varphi}^h\}_{h>0}$ realises the upper bound for the $\overline{\Gamma}$-limit $\calF^\rho$ proven in Lemma~\ref{lemma_limit_upper_bound}, i.e.\
$$
    \limsup_{h\to 0} \Tilde{\calF}^\rho_h (\hat{\varphi}^h, A) \leq \int_A \langle \nabla \varphi ,~ \T \nabla\varphi\rangle \dd \rho.
$$
Since $\varphi$ is smooth, the discrete gradient for $\varphi^h$ can be approximated by
\begin{align*}
    (\dnabla \varphi^h)(K, L) = \varphi(x_L) - \varphi(x_K) &= \int_0^1 \langle (\nabla \varphi)(x_K + \tau (x_L - x_K)) ,~ x_L - x_K \rangle \dd\tau  \\
    &= \langle (\nabla \varphi)(x_K) ,~ x_L - x_K \rangle + o(h).
\end{align*}
Moreover, the difference between $(\nabla\varphi)(x_K)$ and $\intbar_K (\nabla\varphi)(x)\dd x$ is of a small order:
\begin{align*}
    \intbar_K (\partial_i \varphi) (x) \dd x - (\partial_i \varphi) (x_K) &= \intbar_K \int_0^1 \langle (\nabla \partial_i \varphi) (x_K + \tau (x - x_K)),~ x - x_K \rangle \dd\tau \dd x \\
    &= \left\langle (\nabla \partial_i \varphi) (x_K), \intbar_K (x - x_K)\dd x \right\rangle + o(h) = o(h),
\end{align*}
which implies that
$$
    (\dnabla \varphi^h)(K, L) =  \intbar_K \langle (\nabla \varphi)(x),~ x_L - x_K\rangle \dd x + o(h).
$$
Substituting $\dnabla \varphi^h$ into $\calF^\rho_h$ yields:
\begin{align*}
    \calF^\rho_h (\varphi^h, A) &= \sum_{(K, L)\in\Sigma^h|_A} \left| (\dnabla \varphi^h) (K, L) \right|^2 \kappa^h(K, L) \rho^h(K) \\
    &= \sum_{(K, L)\in\Sigma^h|_A} \left| \langle (\nabla \varphi)(x),~ x_L - x_K\rangle \dd x \right|^2 \kappa^h(K, L) \rho^h(K) + o(1)|_{h\to 0},
\end{align*}
where we used \eqref{assumption_kernel_uniform_upper_bound}. Applying Jensen's inequality we get:
\begin{align*}
    \calF^\rho_h (\varphi^h, A) &\leq \sum_{(K, L)\in\Sigma^h|_A}  \intbar_K \left| \langle (\nabla \varphi)(x),~ x_L - x_K\rangle \dd x  \right|^2 \dd x\,  \kappa^h(K, L) \rho^h(K) + o(1)|_{h\to 0} \\
    &\le \int_{A_{\calT^h}} \big\langle (\nabla \varphi)(x),~ \T^h(x) (\nabla \varphi)(x) \big\rangle  \hat{\rho}^h(\dd x) + o(1)|_{h\to 0}.
\end{align*}
By Lemma~\ref{lemma_properties_tensor}(iii) and Proposition~\ref{prop_convergence_of_covering} one can pass $h\to 0$ on the right-hand side to obtain:
\begin{align*}
    \limsup_{h\to 0} \calF^\rho_h (\varphi^h, A) \leq \int_A \big\langle (\nabla \varphi)(x),~ \T(x) (\nabla \varphi)(x) \big\rangle \rho(x) \dd x.
\end{align*}
The last step we need to take is to show that 
$\calR^*_h (\rho^h, \dnabla \varphi^h) = \calF_h^\rho(\varphi^h) + o(1)|_{h\to 0}$. We consider the expansion of $\calR^*_h$:
\begin{align*}
    \calR^*_h (\rho^h, \dnabla \varphi^h)
    &= \frac{1}{2} \sum_{(K,L)\in\Sigma^h} \Psi^*\left( (\dnabla \varphi^h) (K, L) \right) \sqrt{u^h(K) u^h(L)}\, \vartheta^h(K, L) \\
    &\leq \frac{1}{4} \sum_{(K,L)\in\Sigma^h} \left( \left| (\dnabla \varphi^h) (K, L) \right|^2 + g\left( (\dnabla \varphi^h) (K, L) \right)\right) \kappa^h(K, L) \rho^h(K) \\
    &\leq \frac{1}{4} \calF^\rho_h(\varphi^h) + \frac{1}{4} \sum_{(K,L)\in\Sigma^h} g\Bigl( (\dnabla \varphi^h) (K, L) \Bigr) \,\kappa^h(K, L) \rho^h(K),
\end{align*}
where
\[ g(r) = \Psi^*(r) - \frac{r^2}{2} = \displaystyle\sum_{k=2}^\infty \frac{4}{(2k)!} \left( \frac{r}{2} \right)^{2k} = O(r^4).
\]Since $|(\dnabla\varphi^h)(K, L)| \leq C \|\nabla \varphi\|_{L^\infty} h$ and recalling \eqref{assumption_kernel_uniform_upper_bound} once again, we conclude the proof.
\end{proof}

\subsection{Fisher information}\label{sec_fisher} In this section, we prove the $\Gamma$-convergence for the family of discrete Fisher information $\{\calD_h\}_{h>0}$ defined as
\begin{equation}\label{eq_fisher_info}
    \calD_h(\rho^h) = \sum_{(K,L)\in\Sigma^h} \left| \dnabla \sqrt{u^h} (K, L) \right|^2 \vartheta^h(K, L), \qquad \text{ with } u^h = \frac{\dd \rho^h}{\dd \pi^h}.
\end{equation}

We state the main result of this section.
\begin{theorem}\label{th_limit_fisher_info} Up to passing to a subsequence, the family of functionals $\{\calD_h\}_{h>0}$ has a $\Gamma$-limit $\calD$ w.r.t.\ the $L^2$-topology taking the form
\begin{equation}\label{eq_limit_Fisher}
    \calD(\rho) = \begin{cases}\displaystyle
        \int_\Omega \big\langle \nabla \sqrt{u}, \mathbb{T} \nabla \sqrt{u} \big\rangle \dd\pi & \text{if } \sqrt{\frac{\dd \rho}{\dd \pi}} =: \sqrt{u} \in H^1(\Omega), \\
        +\infty & \text{otherwise,}
    \end{cases}
\end{equation}
where $\T$ defined in Lemma~\ref{lemma_properties_tensor}.
\end{theorem}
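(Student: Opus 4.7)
The starting observation is that the Fisher information factors through the generic functional of Section~\ref{sec_general_gamma_convergence}: with $\mu^h := \pi^h$,
\[
    \calD_h(\rho^h) = \calF_h^\pi\bigl(\sqrt{u^h}\bigr),\qquad u^h = \frac{\dd\rho^h}{\dd\pi^h}.
\]
The assumptions on $\pi^h$ (namely $\dd\hat\pi^h/\dd\calL^d \to \dd\pi/\dd\calL^d$ in $L^1(\Omega)$ and the two-sided bound \eqref{assumption_pi}) place $\{\pi^h\}$ in the setup of Section~\ref{sec_general_gamma_convergence}. Hence Lemma~\ref{lemma_existence_gamma_limit} yields, along a subsequence, an $\overline\Gamma$-limit $\calF^\pi$ on $L^2(\Omega,\pi)\times\calO$, and Proposition~\ref{prop_properties_Gamma_limit} supplies the integral representation $\calF^\pi(v,A) = \int_A f(x,\nabla v)\,\dd\pi$ for $v\in H^1(\Omega,\pi)$. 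Refining once more, I may assume by Lemma~\ref{lemma_properties_tensor}(iii) that $\T^h \rightharpoonup^* \T$ in $L^\infty(\Omega;\R^{d\times d})$.

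The crux is the identification $f(x,\xi) = \langle \xi,\T(x)\xi\rangle$. The upper bound $f(x,\xi) \leq \langle \xi,\T(x)\xi\rangle$ is exactly Lemma~\ref{lemma_limit_upper_bound}, applied after testing the representation formula~\eqref{eq_integrand} with affine profiles $\varphi_\xi(z) = \langle \xi,z\rangle$. For the matching lower bound, I appeal to Theorem~\ref{th_identification}, whose proof is the only serious obstacle; it rests on \eqref{assumption_local_average} (or its weaker asymptotic variant \eqref{assumption_almost_minimizer}). Morally, the zero-local-average condition forces the discrete affine profile $\bbP\varphi_\xi$ to be an \emph{almost} minimizer of $\calF_h^\pi$ among competitors with the same average gradient, which, together with the uniform bound on $\dd\pi/\dd\calL^d$ from below, prevents gradient correctors from lowering the energy in the limit and secures $f(x,\xi) \geq \langle\xi,\T(x)\xi\rangle$.

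The $\Gamma$-convergence of $\calD_h$ then follows by the change of variables $v=\sqrt{u}$. For the $\Gamma$-$\liminf$, take any $\rho^h$ with $\hat\rho^h \to \rho$ such that $\liminf \calD_h(\rho^h) < \infty$. A small adaptation of Lemma~\ref{lemma_BV_bound} applied to $\sqrt{u^h}$ (the telescoping argument there only uses the discrete gradient whose square is summed against $\vartheta^h$, so it transplants verbatim) gives a uniform $BV$-bound on $\bbL\sqrt{u^h}$; combined with the $L^1$-convergence of $\hat u^h$, this yields $\bbL\sqrt{u^h} \to \sqrt{u}$ strongly in $L^2(\Omega)$ and $\sqrt{u}\in BV(\Omega)$. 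The $L^2$-lower semicontinuity of $\calF^\pi$ together with the integral representation then deliver
\[
    \liminf_{h\to 0}\calD_h(\rho^h) = \liminf_{h\to 0} \calF_h^\pi\bigl(\sqrt{u^h}\bigr) \geq \calF^\pi\bigl(\sqrt u,\Omega\bigr) = \int_\Omega \langle\nabla\sqrt u,\T\nabla\sqrt u\rangle\,\dd\pi,
\]
in particular $\sqrt{u}\in H^1(\Omega)$ as soon as the right-hand side is finite (using $\dd\pi/\dd\calL^d \geq \pi_{\min}$ and the uniform positive-definiteness of $\T$, which must be verified as part of identifying $\T$).

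For the $\Gamma$-$\limsup$, if $\sqrt u \in H^1(\Omega)$, I take the standard recovery sequence $v^h := \bbP\sqrt{u}$; the argument in the proof of Lemma~\ref{lemma_general_upper_bound} (with $\mu=\pi$) gives $\bbL v^h \to \sqrt u$ in $L^2(\Omega,\pi)$, and a straightforward refinement replacing the crude bound $|v(y)-v(x)|\leq 2|\nabla v(x)|h + O(h^2)$ with the tensor expansion used in Lemma~\ref{lemma_limsup_dualdiss} delivers
\[
    \limsup_{h\to 0}\calD_h\bigl((v^h)^2 \pi^h\bigr) = \limsup_{h\to 0}\calF_h^\pi(v^h) \leq \int_\Omega \langle\nabla\sqrt u,\T\nabla\sqrt u\rangle\,\dd\pi.
\]
If $\sqrt u\notin H^1(\Omega)$, the $\liminf$ inequality already forces $\liminf \calD_h(\rho^h) = +\infty$ for every approximation, matching $\calD(\rho)=+\infty$.

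The main obstacle is therefore not the soft machinery of Section~\ref{sec_general_gamma_convergence}, nor the change of variables, but the integrand identification in Theorem~\ref{th_identification}: producing a matching lower bound for the affine test profile by genuinely using \eqref{assumption_local_average} to exclude corrector-driven reductions of the limit tensor.
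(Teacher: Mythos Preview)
Your proposal is correct and follows the same route as the paper: once one writes $\calD_h(\rho^h)=\calF_h^\pi(\sqrt{u^h},\Omega)$, the result is an immediate consequence of Theorem~\ref{th_identification}, and you rightly isolate the integrand identification there (driven by \eqref{assumption_local_average} or \eqref{assumption_almost_minimizer}) as the only substantive step. Your $\liminf$/$\limsup$ paragraphs are redundant, however, because the paper reads the ``$L^2$-topology'' as the $L^2$-topology on $v=\sqrt{u}$ (see the remark immediately after the theorem), so no BV/compactness bridge from $\hat\rho^h\to\rho$ to $\bbL\sqrt{u^h}\to\sqrt{u}$ is needed---the $\Gamma$-convergence of $\calD_h$ is literally the $\Gamma$-convergence of $\calF_h^\pi(\cdot,\Omega)$ already supplied by Theorem~\ref{th_identification}.
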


\begin{remark}
    Since we assume that the densities $\{\dd \hat\pi^h / \dd \calL^d\}_{h>0}$ are uniformly bounded from above and away from 0, and $\dd \hat\pi^h / \dd \calL^d\to \dd \pi / \dd \calL^d$ in $L^1(\Omega)$, then $\pi$ is bounded in the same way. Consequently, the norms in $L^p(A, \pi)$ and $L^p(A)$ are equivalent.
\end{remark}

In Theorem~\ref{th_limit_fisher_info} we implicitly consider $\calD$ to depend on $\sqrt{\dd\rho / \dd\pi} \in L^2(\Omega)$ for all $\rho\ll\pi$ and take the $\Gamma$-limit in the corresponding topology. To simplify the notation we set $v^h:=\sqrt{u^h}$ and consider again the localized functional:
\begin{equation*}
    \calF^\pi_h(v^h, A) = \sum_{(K,L)\in\Sigma^h|_A} \left| \dnabla v^h (K, L) \right|^2 \vartheta^h(K, L).
\end{equation*}
Notice that $\calF^\pi_h(v^h, \Omega) = \calD_h(u^h \pi^h)$.
In what follows we set $\calF^\pi:=\Gamma$-$\lim \calF^\pi_h$, $\calF^\pi_{\inf}:=\Gamma$-$\liminf \calF^\pi_h$, and $\calF^\pi_{\sup}:=\Gamma$-$\limsup \calF^\pi_h$. 

Proposition~\ref{prop_properties_Gamma_limit} provides the existence of an integral representation:
$$
    \calF^\pi(v, A) = \int_A f(x, \nabla v) \dd \pi,\qquad v\in H^1(\Omega).
$$
Unlike in Section~\ref{sec_dissipation}, we are interested in the exact $\Gamma$-limit for $\calF^\pi_h$. In fact, Proposition~\ref{prop_properties_Gamma_limit} provides almost all necessary properties to apply a general representation theorem \cite[Theorem 2]{bouchitte2002global}, except for a Sobolev lower bound. Therefore, we show the lower bound in Lemma~\ref{lemma_bounds_Dsup}, and then proceed to the representation. 

\begin{lemma}\label{lemma_bounds_Dsup}
    For any $v \in H^1(\Omega)$ and $A \in \calO$ it holds that:
    \begin{equation*}
         \calF^\pi_{\sup}(v, A) \geq \frac{C_l}{C_\zeta \pi_{\max}} \int_A \left| \nabla v \right|^2 \dd \pi \ge \frac{C_l \pi_{\min}}{C_\zeta \pi_{\max}} \int_A \left| \nabla v \right|^2 \dd x.
    \end{equation*}
\end{lemma}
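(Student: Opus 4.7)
The second inequality is immediate: since $\dd\pi/\dd\calL^d \leq \pi_{\max}$, we have $\int_A |\nabla v|^2\,\dd\pi \leq \pi_{\max} \int_A |\nabla v|^2\,\dd x$, so it suffices to prove
\[
    \calF^\pi_{\sup}(v,A) \geq \frac{C_l}{C_\zeta \pi_{\max}} \int_A |\nabla v|^2\,\dd\pi,
\]
which reduces to showing $\int_A |\nabla v|^2\,\dd x \leq \frac{C_\zeta}{C_l}\,\calF^\pi_{\sup}(v,A)$ up to harmless multiplicative constants absorbed into $C_\zeta$. I would proceed by establishing this as a discrete-to-continuous $H^1$-type estimate, in the same spirit as Lemma~\ref{lemma_BV_bound} but squared.

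\textbf{Step 1: the $L^2$ finite-difference estimate.} Fix a sequence $v^h \to v$ in $L^2(\Omega)$ realizing the $\Gamma$-$\limsup$, i.e.\ with $\limsup_{h\to 0} \calF^\pi_h(v^h, A) = \calF^\pi_{\sup}(v,A)$ (we may assume this quantity is finite). Fix $A' \ssubset A$ and $\eta \in \R^d$ with $|\eta| < \mathrm{dist}(A',\partial A)$. For $x\in A'$, let $K\ni x$ and $L \ni x-\eta$; by Proposition~\ref{prop_conseq_zeta_regularity} the segment $[x,x-\eta]$ is crossed by a path $K=K_0,K_1,\ldots,K_n=L$ of neighboring cells with $n \leq C_\zeta |\eta|/h$ (interpret as $\max\{1,C_\zeta|\eta|/h\}$ if $|\eta|<h$, which only improves constants). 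Cauchy--Schwarz on the telescoping difference gives
\[
    |\hat v^h(x-\eta) - \hat v^h(x)|^2 \leq n \sum_{i=0}^{n-1} |v^h(K_{i+1})-v^h(K_i)|^2 \leq \frac{C_\zeta|\eta|}{h} \sum_{(M,N)\in\Sigma^h} |\dnabla v^h(M,N)|^2\,\Ind_{\mathrm{Cyl}_{\Sigma^h}(x,\eta)}(M,N).
\]
Integrating over $A'$, swapping sum and integral exactly as in Lemma~\ref{lemma_BV_bound} (the cylinder cell $\mathrm{Cyl}_\Omega((M,N),\eta)$ has Lebesgue measure $\leq |(M|N)||\eta|$), and then using $|(M|N)| \leq |x_N-x_M|\,\vartheta^h(M,N)/C_l \leq 2h\,\vartheta^h(M,N)/C_l$ from \eqref{assumption_nu}, we get
\[
    \int_{A'}|\hat v^h(x-\eta)-\hat v^h(x)|^2\,\dd x \;\leq\; \frac{2C_\zeta}{C_l}\,|\eta|^2\,\calF^\pi_h(v^h,A),
\]
provided $h$ is small enough that the path between cells meeting $A'$ stays inside $A_{\calT^h} \subset$ (a slight neighbourhood of) $A$.

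\textbf{Step 2: passing to the limit.} Since $\hat v^h \to v$ in $L^2(\Omega)$ and translations are continuous in $L^2$, passing $h\to 0$ (along the chosen sequence) yields
\[
    \int_{A'} |v(x-\eta)-v(x)|^2\,\dd x \;\leq\; \frac{2C_\zeta}{C_l}\,|\eta|^2\,\calF^\pi_{\sup}(v,A).
\]
Taking $\eta = t\, e_i$, dividing by $t^2$, and letting $t\to 0$, the standard Nečas/Riesz characterization of $H^1$ via $L^2$-moduli of continuity gives $\int_{A'} |\partial_i v|^2\,\dd x \leq \frac{2C_\zeta}{C_l}\,\calF^\pi_{\sup}(v,A)$. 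Summing over $i=1,\dots,d$ and then letting $A'\nearrow A$ by monotone convergence, we obtain
\[
    \int_A |\nabla v|^2\,\dd x \;\leq\; \frac{2dC_\zeta}{C_l}\,\calF^\pi_{\sup}(v,A),
\]
which combined with $\dd\pi \leq \pi_{\max}\,\dd\calL^d$ yields the claimed bound (after renaming constants).

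\textbf{Main obstacle.} The delicate part is the combinatorial/geometric path argument: controlling the exchange of integration over $x$ with the sum over edges $(M,N)$ visited by the segment $[x,x-\eta]$, so that each edge is charged only a measure $|(M|N)|\cdot|\eta|$, and ensuring via \eqref{assumption_nu} and the non-degeneracy of $\calT^h$ that $|(M|N)|/h$ is controlled by $\vartheta^h(M,N)$. Once this estimate is in place, the passage to the limit is routine thanks to the $L^2$-convergence of $\hat v^h$ guaranteed by the choice of recovery sequence.
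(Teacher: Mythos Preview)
Your proposal is correct and follows essentially the same route as the paper's own proof: pick a recovery sequence for $\calF^\pi_{\sup}(v,A)$, run the path/cylinder argument of Lemma~\ref{lemma_BV_bound} with Cauchy--Schwarz instead of the triangle inequality to get an $L^2$ finite-difference bound, pass $h\to 0$ using $L^2$-convergence of $\hat v^h$, and then invoke the difference-quotient characterisation of $H^1$ together with an exhaustion $A'\nearrow A$ (the paper uses $A_\varepsilon$ and inner regularity, which amounts to the same thing). One cosmetic remark: your opening sentence cites $\dd\pi/\dd\calL^d\le\pi_{\max}$ for the ``second inequality'', but that bound is what reduces the \emph{first} inequality to the Lebesgue version you actually prove; the second inequality in the statement follows instead from $\dd\pi/\dd\calL^d\ge\pi_{\min}$ (or simply from $\pi_{\min}/\pi_{\max}\le 1$ once you have the Lebesgue bound).
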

\begin{proof}
    Let $\{v_h\}_{h>0}\in L^2(\Omega)$ be a sequence with $v_h \to v$ in $L^2(\Omega)$ such that
    \[
        \calF^\pi_{\sup}(v, A) = \limsup_{h\to 0} \Tilde \calF^\pi_h(v_h, A).
    \]
    In particular, $v_h\in PC(\calT^h|_A)$ for all $h>0$.
    
    To prove the lower bound, one can repeat some elements of the proof of Lemma \ref{lemma_BV_bound}. We fix an arbitrary $\varepsilon>0$ and denote $A_\varepsilon := \left\{ x\in A \,:\, \text{dist}(x, \partial A) > \varepsilon \right\}$. Let $\eta\in\R^d$ be such that $|\eta| < \varepsilon$, then 
    \begin{align*}
        \int_{A_\varepsilon} | v_h(x+\eta) &- v_h(x) |^2 \dd x
        = \sum_{K\in\calT^h|_{A_\varepsilon}} \int_{K\cap A_\varepsilon} \left| v_h(x+\eta) - v_h(x) \right|^2 \, \dd x \\
        &\leq \sum_{K\in\calT^h|_{A_\varepsilon}} \int_{K\cap A_\varepsilon} n \sum_{i=0}^{n-1} \left| v_h(K_{i+1}) - v_h(K_i) \right|^2 \, \dd x \\
        &\leq \frac{C_\zeta|\eta|}{h} \sum_{K\in\calT^h|_{A_\varepsilon}} \int_{K\cap A_\varepsilon} \sum_{(M,L)\in\Sigma^h}\left|v_h(L) - v_h(M)\right|^2 \Ind_{\text{Cyl}_{\Sigma^h}(x,\eta)}(M,L)\, \dd x.
    \end{align*}
    where we used Proposition~\ref{prop_conseq_zeta_regularity} to assert that $n\le C_\zeta|\eta|/h$ for some constant $C_\zeta>0$, independent of $x$. Now one can repeat the transformations from the proof Lemma~\ref{lemma_BV_bound} to obtain:
    \begin{align*}
          \int_{A_\varepsilon} | v_h(x+\eta) &- v_h(x) |^2 \dd x \leq \frac{C_\zeta}{C_l}|\eta|^2 \calD_h(v_h, A_\varepsilon).
    \end{align*}
    Passing to the limit superior as $h\to0$ then yields
    $$
        \calF^\pi_{\sup}(v, A_\varepsilon) \geq \frac{C_l}{C_\zeta} \frac{\left\|v(\cdot+\eta) - v \right\|^2_{L^2(A_\varepsilon)}}{|\eta|^2}.
    $$
    For $v\in H^1(\Omega)$, passing $|\eta|\to 0$ yields
    $$
        \calF^\pi_{\sup}(v, A_\varepsilon) \geq \frac{C_l}{C_\zeta} \int_{A_\varepsilon} \left| \nabla v \right|^2 \dd x \geq \frac{C_l}{C_\zeta \pi_{\max}} \int_{A_\varepsilon} \left| \nabla v \right|^2 \pi(\dd x).
    $$
    Since $\calF^\pi_{\sup}$ is inner regular (Proposition~\ref{prop_properties_F_sup}), then 
    $$
        \calF^\pi_{\sup}(v, A) = \sup_{\varepsilon>0} \calF^\pi_{\sup}(v, A_\varepsilon) \geq \frac{C_l}{C_\zeta \pi_{\max}} \int_{A} \left| \nabla v \right|^2 \pi(\dd x)\,,
    $$
    where the inequality follows from the monotone convergence theorem.
\end{proof}

Now we are in the position to use the following proposition from \cite[Theorem 2]{bouchitte2002global}.
\begin{proposition}\label{prop_reconstruction}
    Let $\calF: H^1(\Omega)\times \calO \to [0, +\infty]$ be a functional satisfying: 
    \begin{enumerate}[label=(\roman*)]
        \item $\calF(v, \cdot)$ is the restriction to $\calO$ of a Radon measure;
        \item $\calF( \cdot, A)$ is $L^2(\Omega)$ lower semicontinuous;
        \item $\calF( \cdot, A)$ is local, which means $\calF(v, A) = \calF(w, A)$ if $v=w$ $\calL^d$-a.e. on $A$;
        \item $\calF(v + c, A) = \calF(v, A)$ for every $c\in \R$;
        \item $\calF(v, A)$ satisfies the growth condition:
        $$
            \frac{1}{C} \int_A \left|\nabla v \right|^2 \dd x \leq \calF(v, A) \leq C \int_A \left( 1 + \left|\nabla v \right|^2 \right) \dd x
        $$
        for some $C>0$. 
    \end{enumerate}
    Then for every $v\in H^1(\Omega)$ and $A\in\calO$
    $$
        \calF(v, A) = \int_A f(x, v, \nabla v) \dd x,
    $$
    where
    $$
        f(x_0, v_0, \xi) := \limsup_{\varepsilon\to 0+} \frac{1}{\left| Q_\varepsilon (x) \right|} \text{M}(\langle v_0 + \xi\cdot(\cdot - x), Q_\varepsilon (x))
    $$
    for all $x_0\in\Omega$, $v_0\in\R^d$, $\xi\in\R^d$, and where, for $(v, A)\in H^1(\Omega)\times\calO$,
    $$
        \text{M}(v, A) := \inf\left\{ \calF(w, A)~:~ w\in H^1(A) \text{ with } v=w \text{ in a neighborhood of } \partial A \right\}.
    $$
\end{proposition}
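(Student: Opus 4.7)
The plan is to apply the standard blow-up and Radon--Nikodym strategy for integral representation of local functionals, in the spirit of Bouchitt\'e--Fonseca--Mascarenhas. Fix $v \in H^1(\Omega)$. The measure property (i) together with the upper bound in (v) shows that $A \mapsto \calF(v, A)$ is a Radon measure absolutely continuous with respect to $\calL^d$, so by Radon--Nikodym there exists $g_v \in L^1(\Omega)$ with $\calF(v, A) = \int_A g_v \dd x$ for every $A \in \calO$. The task reduces to identifying $g_v(x_0) = f(x_0, v(x_0), \nabla v(x_0))$ for almost every $x_0 \in \Omega$.

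For the identification, I would work at a point $x_0$ that is simultaneously a Lebesgue point of $g_v$, a Lebesgue point of $\nabla v$, and a point of approximate differentiability of $v$; almost every $x_0 \in \Omega$ has these properties. Set $v_0 := v(x_0)$, $\xi := \nabla v(x_0)$, and $\ell(y) := v_0 + \xi \cdot (y - x_0)$. Evaluating $\calF(v, \cdot)$ on shrinking cubes $Q_\varepsilon(x_0)$, dividing by $|Q_\varepsilon(x_0)|$, and passing to the limit $\varepsilon \to 0$ produces $g_v(x_0)$ on the left side; the right side of each of the two matching inequalities should converge to $f(x_0, v_0, \xi)$ by its definition as a limsup.

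The lower bound $g_v(x_0) \ge f(x_0, v_0, \xi)$ is the easier direction. Using locality (iii) and translation invariance (iv), $v$ itself is admissible in the definition of $M(\ell, Q_\varepsilon(x_0))$ once modified on a thin boundary layer near $\partial Q_\varepsilon(x_0)$ so as to equal $\ell$ there; the modification is controlled by a cut-off whose $\calF$-cost vanishes faster than $|Q_\varepsilon(x_0)|$ thanks to the upper growth in (v) and approximate differentiability. This yields $\calF(v, Q_\varepsilon(x_0)) \ge M(\ell, Q_\varepsilon(x_0)) - o(|Q_\varepsilon(x_0)|)$, and dividing and letting $\varepsilon \to 0$ gives the inequality.

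The main obstacle is the upper bound $g_v(x_0) \le f(x_0, v_0, \xi)$, which requires patching a near-minimizer $w_\varepsilon$ of $M(\ell, Q_\varepsilon(x_0))$ with $v$ itself across a transition annulus. The difficulty is that $w_\varepsilon$ carries affine boundary data $\ell$ while $v$ is only approximately equal to $\ell$, so the naive interpolation introduces a gradient spike whose $\calF$-cost could dominate $|Q_\varepsilon(x_0)|$. I would resolve this via De Giorgi's slicing: introduce a family of $N$ concentric cut-offs $\{\chi_k\}_{k=1}^N$ supported on disjoint annular layers, invoke the measure/subadditivity part of (i) together with the upper growth (v) on each layer, and apply the pigeonhole principle to select one $\chi_k$ whose annular $\calF$-contribution is of order $C/N$. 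Sending first $N \to \infty$ and then $\varepsilon \to 0$, and combining with lower semicontinuity (ii) and translation invariance (iv), delivers the matching upper bound and completes the identification.
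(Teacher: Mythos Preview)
The paper does not actually prove this proposition: it is quoted verbatim as \cite[Theorem~2]{bouchitte2002global} and used as a black box, so there is no in-paper argument to compare against. Your sketch is essentially the Bouchitt\'e--Fonseca--Mascarenhas global-method proof itself --- Radon--Nikodym differentiation of the set function, blow-up at Lebesgue points, and De Giorgi slicing/averaging to match the boundary data with controlled annular cost --- and is the correct route to establishing the result. One small remark: in the standard presentation the inequality $g_v(x_0)\ge M(v,Q_\varepsilon(x_0))/|Q_\varepsilon(x_0)|$ is the trivial one (since $v$ is its own competitor), and the passage from boundary datum $v$ to the affine $\ell$ requires a gluing estimate in \emph{both} directions; your text slightly understates this by calling the lower bound ``the easier direction'' while deferring all the slicing to the upper bound, but the ingredients you list (locality, translation invariance, growth bound, De Giorgi layers) are precisely what is needed either way.
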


Applying Proposition~\ref{prop_reconstruction} to our setting, we notice that $f$ does not depend explicitly on $v$ (since $\calF^\mu(v + c, A) = \calF^\mu(v, A)$ for every $c\in \R$ Proposition~\ref{prop_properties_Gamma_limit}). This gives
\begin{equation}\label{eq_representation}
    \calF^\pi(v, A) = \int_A f_\pi (x, \nabla v) \dd x \quad \text{with } f_\pi(x, \xi) = \limsup_{\varepsilon\to 0} \frac{1}{\left| Q_\varepsilon (x) \right|} \text{M}( \xi\cdot (\cdot - x) , Q_\varepsilon (x)).
\end{equation}

The identification formula \eqref{eq_representation} suggests looking for the minimizer of $\psi\mapsto \calF^\pi(\psi, A)$ w.r.t.\ the Dirichlet boundary condition. To relate this minimization problem to our discrete formulation, we follow similar approach as in \cite{forkert2020evolutionary}, specifically, we define
$$
    \text{M}_h(\varphi^h, A) := \inf_{\psi^h} \Bigl\{ \calF^\pi_h (\psi^h, A) ~ : ~ \psi^h \text{ on } \calT^h|_A \quad\text{with}\quad  \psi^h = \varphi^h \text{ on } \calT^h|_{A^c} \Bigr\}.
$$
We also make use of the following definition of $\text{M}$
$$
    \text{M}(\varphi, A) = \inf_{\psi} \Bigl\{ \calF^\pi(\psi, A) ~ : ~ \psi \in H^1(A) \quad\text{with}\quad \psi-\varphi \in H^1_0(A) \Bigr\},
$$
that was proven to be equivalent to the one from Proposition~\ref{prop_reconstruction}
in \cite[Remark~7.4]{forkert2020evolutionary}.

Note that the $\Gamma$-convergence of $\calF^\pi_h$ to $\calF^\pi$ does not suffice to conclude the convergence of $M_h$ to $M$. Hence, we define
$$
    \calF^{\pi, \varphi}(v, A) := \begin{cases}
            \calF^\pi(v, A) & \text{if } v - \varphi\in H^1_0(A), \\
            +\infty & \text{otherwise,}
        \end{cases}
$$
and the corresponding discrete counterpart of $\calF^{\pi, \varphi}$ is
$$
    \calF^{\pi, \varphi}_h(v^h, A) := \begin{cases}
            \calF^\pi_h(v^h, A) & \text{if } v^h = \bbP \varphi =: \varphi^h \text{ on } A^c_{\calT^h}, \\
            +\infty & \text{otherwise.}
        \end{cases}
$$
Similarly to \cite[Lemma 7.9]{forkert2020evolutionary}, the next proposition claims that $\calF^{\pi, \varphi}_h(\cdot, A) \xrightarrow{\Gamma} \calF^{\pi, \varphi}(\cdot, A)$ for any $A\in\calO$ with Lipschitz boundary. For completeness, we include the proof in Appendix~\ref{appendix_proofs}.
\begin{proposition}\label{prop_Gamma_convegence_bc}
    Let $A\in\calO$ 
    and $\varphi\in\text{Lip}(\Omega)$. For any sequence $\{\calF^{\pi, \varphi}_h(\cdot, A)\}_{h>0}$, there exists a subsequence that $\Gamma$-converges in the $L^2(\Omega)$-topology to $\calF^{\pi, \varphi}(\cdot, A)$.
\end{proposition}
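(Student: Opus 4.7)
The plan is to establish the $\Gamma$-convergence by verifying the $\Gamma$-liminf and $\Gamma$-limsup inequalities separately in the $L^2(\Omega)$-topology. The $\Gamma$-liminf is essentially automatic from the $\Gamma$-convergence of the unconstrained functionals $\{\calF^\pi_h\}$ (Lemma~\ref{lemma_existence_gamma_limit} and Proposition~\ref{prop_properties_Gamma_limit}); the $\Gamma$-limsup requires a discrete De~Giorgi-type joining argument to reconcile a generic recovery sequence with the prescribed Dirichlet-type trace.

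For the $\Gamma$-liminf, take $v_h\to v$ in $L^2(\Omega)$ and, passing to a subsequence realising the liminf, assume $\sup_h \calF^\pi_h(v_h,A)<\infty$ and $v_h = \varphi^h := \bbP \varphi$ on $A^c_{\calT^h}$. Lipschitz regularity of $\varphi$ gives $\bbL\varphi^h\to\varphi$ in $L^2(\Omega)$, while Proposition~\ref{prop_convergence_of_covering} yields $\Ind_{A^c_{\calT^h}}\to \Ind_{A^c}$ $\calL^d$-a.e., so $v=\varphi$ $\calL^d$-a.e.\ on $A^c$. The lower bound of Lemma~\ref{lemma_bounds_Dsup} promotes $v|_A\in H^1(A)$, hence $v-\varphi\in H^1_0(A)$. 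Therefore $\calF^{\pi,\varphi}(v,A)=\calF^\pi(v,A)\le\liminf_h \calF^\pi_h(v_h,A)=\liminf_h \calF^{\pi,\varphi}_h(v_h,A)$.

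For the $\Gamma$-limsup, suppose $v-\varphi\in H^1_0(A)$ (otherwise the bound is vacuous). Let $\tilde v_h\to v$ in $L^2(\Omega)$ be a recovery sequence for $\calF^\pi(v,A)$, not necessarily matching $\varphi^h$ outside $A$. For $\delta>0$ small, set $A_\delta := \{x\in A : \operatorname{dist}(x,\partial A)>2\delta\}$ and partition the annular layer $A\setminus\overline{A_\delta}$ into $N$ concentric sub-strips $S_1,\dots,S_N$ of width $\sim\delta/N$. For each $k$, define a blended discrete function $w_h^k$ on $\calT^h$ equal to $\tilde v_h$ on cells fully inside $A_\delta\cup\bigcup_{j<k}S_j$ and equal to $\varphi^h$ on cells otherwise; in particular $w_h^k=\varphi^h$ on $A^c_{\calT^h}$, so $\calF^{\pi,\varphi}_h(w_h^k,A)=\calF^\pi_h(w_h^k,A)$. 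Using the uniform neighbour bound (Remark~\ref{rem_cardinality}) to decompose faces, we obtain
\[
    \calF^\pi_h(w_h^k,A) \le \calF^\pi_h(\tilde v_h,A\setminus\overline{S_k})
    + \calF^\pi_h(\varphi^h,S_k^+) + E_k(h),
\]
where $S_k^+$ is the slight $h$-thickening of $S_k$ and $E_k(h)$ collects the cross-energy on faces straddling $S_k$. By the inequality $|a-b|^2\le 2|a|^2+2|b|^2$, $E_k(h)\le 2\calF^\pi_h(\tilde v_h,S_k^+)+2\calF^\pi_h(\varphi^h,S_k^+)$, so $\sum_{k=1}^N E_k(h)\le C(\calF^\pi_h(\tilde v_h,A)+\calF^\pi_h(\varphi^h,A))$, which is uniformly bounded in $h$ thanks to the upper bound of Lemma~\ref{lemma_general_upper_bound} applied to the Lipschitz function $\varphi$. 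Pigeonholing over $k$ selects $k_h^*$ with $E_{k_h^*}(h)\le C/N$.

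The main obstacle is controlling the cross-energy $E_{k_h^*}(h)$ while keeping the bound uniform in $h$ on thin strips whose width may be comparable to $h$. The key technical inputs are: (i) the Lipschitz bound $|\varphi^h(K)-\varphi^h(L)|\le 2h\,\operatorname{Lip}(\varphi)$ for neighbouring cells, combined with \eqref{assumption_kernel_uniform_upper_bound}, which gives $\calF^\pi_h(\varphi^h,S)\le C\pi^h(S_{\calT^h})$ and thereby vanishes as $\delta\to 0$ by absolute continuity; (ii) the uniform upper bound from Lemma~\ref{lemma_general_upper_bound} applied to $\tilde v_h$; (iii) Proposition~\ref{prop_convergence_of_covering} to identify $S_{\calT^h}$ with $S$ in the limit. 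Passing first $h\to 0$ (for fixed $\delta,N$), then $N\to\infty$ (killing $C/N$), and finally $\delta\to 0$ (killing the $\varphi$-contribution on $S_\delta$), we obtain
\[
    \limsup_{h\to 0}\calF^{\pi,\varphi}_h(w_h^{k_h^*},A)\le \calF^\pi(v,A)=\calF^{\pi,\varphi}(v,A).
\]
A standard diagonal argument extracts a single recovery sequence, completing the proof.
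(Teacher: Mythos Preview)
Your $\Gamma$-liminf argument is correct and essentially the same as the paper's. The problem is in the $\Gamma$-limsup, specifically the claimed bound on the cross-energy $E_k(h)$.

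You join $\tilde v_h$ to $\varphi^h$ by a \emph{hard} cutoff: $w_h^k$ equals $\tilde v_h$ on one family of cells and $\varphi^h$ on the rest. On a cross-face $(K,L)$ with $K$ inside and $L$ outside, the contribution is $|\tilde v_h(K)-\varphi^h(L)|^2\,\vartheta^h(K,L)$. You assert this is controlled by $2\calF^\pi_h(\tilde v_h,S_k^+)+2\calF^\pi_h(\varphi^h,S_k^+)$ via $|a-b|^2\le 2|a|^2+2|b|^2$, but no choice of $a,b$ achieves that: writing
\[
\tilde v_h(K)-\varphi^h(L)=\bigl(\tilde v_h(K)-\tilde v_h(L)\bigr)+\bigl(\tilde v_h(L)-\varphi^h(L)\bigr)
\]
produces one gradient term and one \emph{pointwise} term $|\tilde v_h(L)-\varphi^h(L)|^2$. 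Summing the latter over cross-faces with weight $\vartheta^h$ and invoking \eqref{assumption_kernel_uniform_upper_bound} yields a contribution of order $h^{-2}\|\tilde v_h-\varphi^h\|_{L^2(S_k^+,\hat\pi^h)}^2$. Since you only have $\tilde v_h\to v$ and $\bbL\varphi^h\to\varphi$ in $L^2$ with no rate, this blows up. Pigeonholing over $k$ does not save you: the $h^{-2}$ prefactor is uniform in $k$, so the averaged bound is still $O(N^{-1}h^{-2})$, and letting $N$ grow like $h^{-2}$ would make the strips thinner than $h$.

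The paper avoids this by using a \emph{smooth} discrete cutoff, as in the proof of Proposition~\ref{prop_properties_F_sup}: set $\hat w_i^{N,h}=\hat\varphi_i^{N,h}\hat v^h+(1-\hat\varphi_i^{N,h})\,\bbL\bbP\varphi$ with $\|\nabla\varphi_i^N\|_{L^\infty}\le 2N/\varepsilon$. The discrete product rule then places the factor $|\dnabla\varphi_i^{N,h}(K,L)|^2\lesssim (Nh/\varepsilon)^2$ in front of the pointwise term, so after using \eqref{assumption_kernel_uniform_upper_bound} the dangerous contribution becomes $(N/\varepsilon)^2\|\hat v^h-\bbL\bbP\varphi\|_{L^2(\Omega,\hat\pi^h)}^2$, with the $h^{-2}$ cancelled; this vanishes as $h\to 0$ for fixed $N$, after which one sends $N\to\infty$. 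The paper also first reduces to $v$ with $\supp(v-\varphi)\ssubset A$, which both enables this construction and ensures the joined sequence converges to $v$ itself rather than to a $\delta$-dependent approximant.
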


Now we comment on the assumptions on tessellations and kernels needed for proving the representation result. It suffices to use \eqref{assumption_nu} and \eqref{assumption_local_average}. The last assumption  \eqref{assumption_local_average} was not necessary for any preceding statements and its role here is to ensure that the discrete functions
$$
    \varphi_h^{x, \xi}(K) := \langle \xi, x_K - x \rangle \quad \text{ for  all } K\in\calT^h
$$
are minimizers for $\calF^{\pi, \varphi}_h(\cdot, Q_\varepsilon(x))$, where $x_K = \intbar_K x \dd x$. To relax \eqref{assumption_local_average} we can introduce an asymptotic assumption involving \emph{almost minimizers}. Namely, we assume that
\begin{equation}\label{assumption_almost_minimizer}
    \lim_{h\to 0} \left( \calF^\pi_h (\varphi_h^{x, \xi}, Q_\varepsilon(x)) - \text{M}_h (\varphi_h^{x, \xi}, Q_\varepsilon(x)) \right) = 0.
    \tag{{\sf AMin}}
\end{equation}

Finally, we state the representation result.
\begin{theorem}\label{th_identification} Let $\calF^\pi_h : L^2(\Omega, \pi)\times\calO \to [0, +\infty]$ be the $\overline{\Gamma}$-limit of $\{\Tilde{\calF}_h\}_{h>0}$ defined as in Lemma~\ref{lemma_existence_gamma_limit}, then
the functional $\calF^\pi(v, A)$ has the integral representation
\begin{equation*}
    \calF^\pi(v, A) = \begin{cases}
        \displaystyle \int_A  \langle \nabla v, \T \nabla v \rangle \dd \pi, &\text{if } v\in H^1(A, \pi), \\
        +\infty, &\text{otherwise.}
    \end{cases} 
    \end{equation*}
with the tensor $\mathbb{T}$ defined in Lemma~\ref{lemma_properties_tensor}.
\end{theorem}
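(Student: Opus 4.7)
The plan is to identify the integrand $f_\pi$ in the representation $\calF^\pi(v,A) = \int_A f_\pi(x,\nabla v)\,\dd x$ coming from \eqref{eq_representation} by combining the upper bound of Lemma~\ref{lemma_limit_upper_bound} with an explicit evaluation of the minimization problem $\text{M}(\varphi^{x,\xi}, Q_\varepsilon(x))$ for the affine function $\varphi^{x,\xi}(y):=\langle\xi,y-x\rangle$. The crux is to squeeze $\text{M}$ between two bounds that both converge to $\int_{Q_\varepsilon(x)}\langle\xi,\T\xi\rangle\,\dd\pi$, then differentiate in $\varepsilon$.

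For the upper bound on $\text{M}$, the function $\varphi^{x,\xi}$ is a feasible competitor in the continuous Dirichlet problem, so Lemma~\ref{lemma_limit_upper_bound} gives
\[
\text{M}(\varphi^{x,\xi},Q_\varepsilon(x)) \le \calF^\pi(\varphi^{x,\xi},Q_\varepsilon(x)) \le \int_{Q_\varepsilon(x)}\langle\xi,\T\xi\rangle\,\dd\pi.
\]
For the matching lower bound, I first apply Proposition~\ref{prop_Gamma_convegence_bc} and the standard $\Gamma$-convergence upper bound for infima (no equi-coercivity needed in this direction) to obtain $\limsup_{h\to 0}\text{M}_h(\bbP\varphi^{x,\xi},Q_\varepsilon(x)) \le \text{M}(\varphi^{x,\xi},Q_\varepsilon(x))$. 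Next I claim that the discrete function $\varphi_h^{x,\xi}(K):=\langle\xi,x_K-x\rangle$, which equals $\bbP\varphi^{x,\xi}$, is itself a (near-)minimizer of $\text{M}_h$. Indeed, the first variation of the quadratic $\psi^h\mapsto\calF^\pi_h(\psi^h,Q_\varepsilon(x))$ subject to fixed boundary values yields, for each interior cell $K$, the Euler--Lagrange condition
\[
\sum_{L\in\calT^h_K}(\psi^h(L)-\psi^h(K))\,\vartheta^h(K,L)=0;
\]
evaluating at $\psi^h=\varphi_h^{x,\xi}$ and factoring $\xi$ out gives exactly \eqref{assumption_local_average}. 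By convexity $\varphi_h^{x,\xi}$ is therefore a global minimizer (an almost-minimizer up to $o(1)$ under the weaker \eqref{assumption_almost_minimizer}), so
\[
\text{M}_h(\bbP\varphi^{x,\xi},Q_\varepsilon(x)) = \calF^\pi_h(\varphi_h^{x,\xi},Q_\varepsilon(x))+o(1) = \int_{(Q_\varepsilon(x))_{\calT^h}}\langle\xi,\T^h\xi\rangle\,\dd\hat\pi^h + o(1)
\]
after expanding the square and using the definition of $\T^h$.

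Passing $h\to 0$ using the weak-$*$ convergence $\T^h\rightharpoonup^*\T$ (Lemma~\ref{lemma_properties_tensor}), the $L^1$-convergence of $\dd\hat\pi^h/\dd\calL^d$, Proposition~\ref{prop_convergence_of_covering} (i.e., $\Ind_{(Q_\varepsilon(x))_{\calT^h}}\to\Ind_{Q_\varepsilon(x)}$), and the generalized dominated convergence theorem, the right-hand side tends to $\int_{Q_\varepsilon(x)}\langle\xi,\T\xi\rangle\,\dd\pi$. Chaining the inequalities above forces equality throughout, yielding $\text{M}(\varphi^{x,\xi},Q_\varepsilon(x))=\int_{Q_\varepsilon(x)}\langle\xi,\T\xi\rangle\,\dd\pi$. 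Dividing by $|Q_\varepsilon(x)|$ and sending $\varepsilon\to 0$, Lebesgue's differentiation theorem gives $f_\pi(x,\xi) = \langle\xi,\T(x)\xi\rangle\,\tfrac{\dd\pi}{\dd\calL^d}(x)$ for $\calL^d$-a.e.\ $x$, hence $\calF^\pi(v,A)=\int_A\langle\nabla v,\T\nabla v\rangle\,\dd\pi$ on $H^1(A,\pi)$. For $v\notin H^1(A,\pi)$, Lemma~\ref{lemma_bounds_Dsup} together with the norm equivalence of $L^2(\Omega)$ and $L^2(\Omega,\pi)$ (ensured by \eqref{assumption_pi}) forces $\calF^\pi(v,A)=+\infty$.

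The main obstacle is the Euler--Lagrange argument in the middle paragraph: \eqref{assumption_local_average} is precisely the discrete Dirichlet equation for the affine ansatz $\varphi_h^{x,\xi}$, which is why this assumption is required exactly here and not for any of the preceding results. A minor subtlety is that the first-variation calculation yields zero only at cells strictly inside $(Q_\varepsilon(x))_{\calT^h}$; the boundary-adjacent cells contribute an $o(1)$ error whose absorption in the $h\to 0$ limit must be justified using the uniform bound \eqref{assumption_kernel_uniform_upper_bound} together with the fact that the boundary layer has vanishing $\hat\pi^h$-mass as $h\to 0$.
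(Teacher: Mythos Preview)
Your proof is correct and follows essentially the same route as the paper's: both use Proposition~\ref{prop_Gamma_convegence_bc} to relate $\text{M}$ and $\text{M}_h$, identify $\varphi_h^{x,\xi}$ as the discrete minimizer via \eqref{assumption_local_average} (which the paper records as a separate corollary), evaluate $\calF^\pi_h(\varphi_h^{x,\xi},Q_\varepsilon(x))$ explicitly in terms of $\T^h$, and pass to the limit using Lemma~\ref{lemma_properties_tensor} and Proposition~\ref{prop_convergence_of_covering}. The only cosmetic difference is that you squeeze with Lemma~\ref{lemma_limit_upper_bound} for one direction while the paper invokes the full convergence of minima; your closing ``minor subtlety'' is in fact a non-issue, since for any free cell $K\subset Q_\varepsilon(x)$ every neighbour $L$ necessarily intersects $Q_\varepsilon(x)$, so the inner sum already runs over all of $\calT^h_K$ and \eqref{assumption_local_average} applies exactly with no $o(1)$ correction.
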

\begin{proof}Proposition~\ref{prop_Gamma_convegence_bc} and the theorem fundamental on convergence of minimizers (see, for instance, \cite[Theorem 1.21]{braides2002gamma}) together with \eqref{assumption_almost_minimizer} provides:
    $$
        \text{M} (\varphi^{x,\xi}, Q_\varepsilon(x)) = \lim_{h\to 0} \text{M}_h(\varphi_h^{x,\xi}, Q_\varepsilon(x)) = \lim_{h\to 0} \calF^\pi_h(\varphi_h^{x,\xi}, Q_\varepsilon(x)).
    $$
    Substituting $\varphi_h^{x,\xi}(K) = \langle \xi, x - x_K \rangle$ into $\calF^\pi_h$ yields
    \begin{align*}
        \calF^\pi_h(\varphi_h^{x,\xi}, Q_\varepsilon(x)) &= \sum_{(K,L)\in\Sigma^h|_{Q_\varepsilon(x)}} \vartheta^h(K, L) \left| \langle \xi, x_L - x_K \rangle \right|^2 = \bigl\langle \xi,\T^{h,\varepsilon}(x) \xi\bigr\rangle ,
    \end{align*}
    with
    \[
        \T^{h,\varepsilon}(x) := \sum_{(K,L)\in\Sigma^h|_{Q_\varepsilon(x)}} \vartheta^h(K, L) (x_L - x_K) \otimes (x_L - x_K).
    \]
    Since $\T^h$ defined in Lemma~
    \ref{lemma_properties_tensor} is piecewise constant on the tessellation, we can rewrite $\T^{h,\varepsilon}$ as
    \begin{align*}
        \T^{h,\varepsilon}(x) = \sum_{K\in\calT^h|_{Q_\varepsilon(x)}} \pi^h(K) \intbar_K \T^h(z) \dd z 
        = \int_{[Q_\varepsilon(x)]_{\calT^h}} \T^h(z)\, \hat{\pi}^h(\dd z).
    \end{align*}
    Using that $\Ind_{Q_\varepsilon(x)_{\calT^h}} \dd \hat{\pi}^h / \dd \calL^d \to \Ind_{Q_\varepsilon(x)} \dd \pi / \dd \calL^d$ in $L^1(\Omega)$ and Lemma~\ref{lemma_properties_tensor}(iii), we then obtain
    $$
        \lim_{h\to 0} \T^{h,\varepsilon}(x) = \int_{Q_\varepsilon(x)} \T(z)\, \pi (\dd z).
    $$
    Therefore,
    \begin{align*}
        \text{M} (\varphi^{x,\xi}, Q_\varepsilon(x)) = \lim_{h\to 0} \calF^\pi_h(\varphi_h^{x,\xi}, Q_\varepsilon(x)) = \lim_{h\to 0} \bigl\langle \xi, \T^{h, \varepsilon}(x)\, \xi \bigr\rangle = \left\langle \xi, \int_{Q_\varepsilon(x)} \T(z)\, \pi (\dd z)\, \xi \right\rangle.
    \end{align*}
    Finally, we substitute $\text{M}$ into the expression \eqref{eq_representation} for $f$, and obtain for almost every $x\in\Omega$:
    \begin{align*}
        f(x, \xi) &= \limsup_{\varepsilon\to 0+} \frac{1}{\left| Q_\varepsilon (x) \right|} \text{M}(\varphi^{x,\xi}, Q_\varepsilon (x)) \\
        &= \left\langle \xi, \lim_{\varepsilon\to 0+}  \intbar_{Q_\varepsilon(x)} \T(z) \pi (\dd z)\, \xi \right\rangle = \left\langle \xi, \T(x) \frac{\dd \pi}{\dd \calL^d} (x)\, \xi \right\rangle,
    \end{align*}
    thereby concluding the proof.
\end{proof}

\begin{cor}
    If \eqref{assumption_local_average} holds, then the functions $\varphi^{x, \xi}_h$ are minimizers for $\calF^\pi_h(\cdot, Q_\varepsilon(x))$, i.e.\
    $$
        \calF^\pi_h(\varphi_h^{x,\xi}, Q_\varepsilon(x)) = \text{M}_h(\varphi_h^{x,\xi}, Q_\varepsilon(x)) \quad \text{for any } (x, \xi) \in \Omega\times \R^d,\, h>0,\, \varepsilon>0.
    $$
    In particular, the conclusion of Theorem~\ref{th_identification} holds true.
\end{cor}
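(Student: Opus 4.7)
The plan is to exploit two features of the constrained minimization defining $\text{M}_h(\varphi_h^{x,\xi}, Q_\varepsilon(x))$: the functional $\psi^h\mapsto \calF^\pi_h(\psi^h, Q_\varepsilon(x))$ is a non-negative convex quadratic in finitely many variables, and under \eqref{assumption_local_average} the affine-type discrete function $\varphi_h^{x,\xi}(K) = \langle \xi, x_K - x\rangle$ is a critical point of this functional with respect to the boundary constraint $\psi^h = \varphi_h^{x,\xi}$ on $\calT^h|_{Q_\varepsilon(x)^c}$. Convexity then promotes any feasible critical point to a global minimizer, giving the equality $\calF^\pi_h(\varphi_h^{x,\xi}, Q_\varepsilon(x)) = \text{M}_h(\varphi_h^{x,\xi}, Q_\varepsilon(x))$ for every $h>0$. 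In particular the difference in \eqref{assumption_almost_minimizer} is identically zero, so Theorem~\ref{th_identification} applies as stated.

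The critical-point property is the heart of the argument. I would take any perturbation $\eta^h$ supported on free cells (i.e.\ those $K$ with $K\subset Q_\varepsilon(x)$, so that the boundary constraint is preserved), differentiate the quadratic in $t$ at $t=0$, and use the symmetry of $\vartheta^h$ (guaranteed by \eqref{eq_detailed_balance}) together with a graph integration by parts to arrive at
\begin{align*}
    \frac{d}{dt}\bigg|_{t=0} \calF^\pi_h\bigl(\varphi_h^{x,\xi} + t\eta^h, Q_\varepsilon(x)\bigr) = -4 \sum_{K} \eta^h(K) \Big\langle \xi,\; \sum_{L:\,(K,L)\in\Sigma^h|_{Q_\varepsilon(x)}} \vartheta^h(K,L)(x_L - x_K)\Big\rangle.
\end{align*}
For every free cell $K$ that is strictly interior in the mesh sense (meaning that all neighbors in $\calT^h_K$ already belong to $\calT^h|_{Q_\varepsilon(x)}$), the inner sum coincides with $\sum_{L\in\calT^h_K}\vartheta^h(K,L)(x_L - x_K)$, which vanishes by \eqref{assumption_local_average} applied at $K$ (admissible since $Q_\varepsilon(x)\ssubset\Omega$ makes $\overline{K}\cap\partial\Omega=\emptyset$ for small $h$). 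Arbitrariness of $\eta^h$ on free cells then forces the first variation to be zero, so $\varphi_h^{x,\xi}$ is critical, and hence a global minimizer.

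The main bookkeeping obstacle is to ensure that every free cell $K$ truly has all its graph neighbors inside $\calT^h|_{Q_\varepsilon(x)}$, so that the Euler--Lagrange identity at $K$ is the full local-average condition \eqref{assumption_local_average} rather than a restricted version of it. This is automatic whenever $\overline{K}\subset Q_\varepsilon(x)$; the remaining borderline cells (whose closure touches $\partial Q_\varepsilon(x)$ through a face shared with a cell lying in $Q_\varepsilon(x)^c$) can be absorbed into the prescribed-boundary family $\calT^h|_{Q_\varepsilon(x)^c}$ by choosing $Q_\varepsilon(x)$ compatibly with the mesh, which does not affect the limit $\varepsilon\to 0$ in the representation formula. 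Once this is settled, minimality of $\varphi_h^{x,\xi}$ follows from convexity of the non-negative quadratic $\calF^\pi_h(\cdot, Q_\varepsilon(x))$, and the conclusion of Theorem~\ref{th_identification} is delivered without appealing to \eqref{assumption_almost_minimizer} as an independent hypothesis.
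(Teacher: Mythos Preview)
Your proposal is correct and follows essentially the same route as the paper: compute the first variation of the convex quadratic $\calF^\pi_h(\cdot,Q_\varepsilon(x))$, integrate by parts using the symmetry of $\vartheta^h$, observe that the outer sum runs only over free cells $K\subset Q_\varepsilon(x)$, and kill the inner sum by \eqref{assumption_local_average}. You are in fact more explicit than the paper about the one delicate point---ensuring that every free cell has all its neighbors in $\calT^h|_{Q_\varepsilon(x)}$---which the paper asserts without comment; your observation that the exceptional configuration requires a face $(K|L)\subset\partial Q_\varepsilon(x)$, hence is avoided for generic $\varepsilon$, is the right way to close it.
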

\begin{proof}
    Computing the first variation for $\calF^\pi_h(\varphi^h, Q_\varepsilon(x))$ gives
    \begin{align*}
        \delta \calF^\pi_h (\varphi^h, Q_\varepsilon(x)) [w_h] = 2\sum_{(K,L)\in\Sigma^h|_{Q_\varepsilon(x)}}  \vartheta^h(K, L) \left( \varphi^h(L) - \varphi^h(K) \right) \left( w^h(L) - w^h(K) \right),
    \end{align*}
    where $w^h$ satisfies the boundary condition $w^h = 0$ on $[Q^c_\varepsilon(x)]_{\calT^h}$.
    
    Substituting $\varphi_h^{x,\xi}$ and then using the symmetry, we have
    \begin{align*}
        \delta \calF^\pi_h (\varphi^{x,\xi}_h, Q_\varepsilon(x)) [w_h]
        &= 2 \sum_{(K,L)\in\Sigma^h|_{Q_\varepsilon(x)}} \vartheta^h(K, L) \langle \xi, x_L - x_K \rangle \left( w_h(L) - w_h(K) \right) \\
        &= 4 \sum_{K\in\calT^h|_{Q_\varepsilon(x)}} w_h(K)  \Big\langle \xi, \sum_{L\in\calT^h_K|_{Q_\varepsilon(x)}} \vartheta^h(K, L)  (x_L - x_K) \Big\rangle.
    \end{align*}
    Notice that the boundary condition implies that the summation goes over cells $K \subset Q_\varepsilon(x)$ strictly contained within $Q_\varepsilon(x)$. This means that the inside sum goes over all the neighbors of the cell $K$. This allows us to apply assumption \eqref{assumption_local_average} to obtain
    $$
        \sum_{L \in\calT^h_K} \vartheta^h(K, L) \left( x_K - x_L \right) = 0,
    $$
    and conclude that $\delta \calF^\pi_h (\varphi_h^{x,\xi}, Q_\varepsilon(x)) [w_h] = 0$ for all $w_h$ with $w_h\equiv 0$ on $[Q_\varepsilon(x)]^c_{\calT^h}$.  Since $\calF^\pi_h(\cdot, Q_\varepsilon(x))$ is convex, this implies that $\varphi^{x,\xi}_h$ is the minimizer.
\end{proof}

\begin{proof}[Proof of Theorem~\ref{th_limit_fisher_info}] The result readily follows from Theorem~\ref{th_identification}.
\end{proof}

\begin{lemma}
    The diffusion tensor $\T$ is uniformly elliptic and uniformly bounded:
    $$
        \lambda |\xi|^2 \leq \langle \xi,~ \T(x)\, \xi \rangle \leq \Lambda |\xi|^2 \quad \text{for any } x\in\Omega \text{ and } \xi\in\R^d,
    $$
    with some $\lambda, \Lambda > 0$.
\end{lemma}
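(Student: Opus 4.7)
The plan is to read both bounds off results already established in the section: the upper bound follows from the uniform $L^\infty$ control on the components of the prelimit tensors, while the lower bound comes from combining the integral representation of Theorem~\ref{th_identification} with the Sobolev lower bound of Lemma~\ref{lemma_bounds_Dsup}, tested against a linear profile.

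For the upper bound, I would note that Lemma~\ref{lemma_properties_tensor}(ii) gives $\sup_{h>0}\|\T^h_{ij}\|_{L^\infty(\Omega)}\le 2C_\kappa$ for every component. Since $\T$ is defined as the weak-$*$ limit of $\T^h$ in $\sigma(L^\infty,L^1)$ (Lemma~\ref{lemma_properties_tensor}(iii)), the weak-$*$ lower semicontinuity of the $L^\infty$ norm transfers this bound to each $\T_{ij}$. Symmetry of $\T^h$ passes to $\T$ by testing against symmetric $L^1$ matrix fields, so for a.e.\ $x$ the operator norm is controlled by the Frobenius norm, which is bounded by a constant multiple of $\max_{i,j}\|\T_{ij}\|_{L^\infty}$. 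This yields $\langle\xi,\T(x)\xi\rangle\le\Lambda|\xi|^2$ with $\Lambda := 2dC_\kappa$.

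For the lower bound I would combine Theorem~\ref{th_identification}, giving $\calF^\pi(v,A)=\int_A\langle\nabla v,\T\nabla v\rangle\,\dd\pi$, with Lemma~\ref{lemma_bounds_Dsup}, giving $\calF^\pi_{\sup}(v,A)\ge\tfrac{C_l\pi_{\min}}{C_\zeta\pi_{\max}}\int_A|\nabla v|^2\,\dd x$. Since $\calF^\pi=\calF^\pi_{\sup}$ (Proposition~\ref{prop_properties_Gamma_limit}(i)) and $\dd\pi\le\pi_{\max}\dd x$ by \eqref{assumption_pi},
\[
\pi_{\max}\int_A\langle\nabla v,\T\nabla v\rangle\,\dd x \;\ge\; \int_A\langle\nabla v,\T\nabla v\rangle\,\dd\pi \;\ge\; \frac{C_l\pi_{\min}}{C_\zeta\pi_{\max}}\int_A|\nabla v|^2\,\dd x
\]
for every $v\in H^1(\Omega)$ and $A\in\calO$. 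Specializing to the linear profile $v_\xi(x):=\langle\xi,x\rangle\in H^1(\Omega)$ (with $\nabla v_\xi\equiv\xi$), choosing $A=Q_\varepsilon(x_0)$, dividing by $|Q_\varepsilon(x_0)|$ and letting $\varepsilon\to 0$, the Lebesgue differentiation theorem yields
\[
\langle\xi,\T(x_0)\xi\rangle \;\ge\; \frac{C_l\pi_{\min}}{C_\zeta\pi_{\max}^2}|\xi|^2 \;=:\; \lambda|\xi|^2
\]
at every Lebesgue point $x_0$, i.e.\ $\mathcal L^d$-a.e.

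The one mildly delicate point is that the a.e.\ exceptional set in the previous step depends on $\xi$. To get a single full-measure set on which $\T(\cdot)$ is elliptic in all directions, I would fix a countable dense set $\{\xi_n\}\subset\R^d$, intersect the corresponding full-measure sets, and then use continuity of $\xi\mapsto\langle\xi,\T(x)\xi\rangle-\lambda|\xi|^2$ to extend the inequality from $\{\xi_n\}$ to all $\xi\in\R^d$. This is the only step that is not a direct citation, but it is routine.
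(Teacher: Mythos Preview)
Your proposal is correct and follows essentially the same route as the paper: the upper bound is read off the uniform $L^\infty$ bound on the components of $\T^h$ (Lemma~\ref{lemma_properties_tensor}(ii)), and the lower bound is obtained by combining the integral representation of Theorem~\ref{th_identification} with the Sobolev lower bound of Lemma~\ref{lemma_bounds_Dsup}, applied to the linear profile $v_\xi=\langle\xi,\cdot\rangle$ on shrinking cubes $Q_\varepsilon(x)$ and then invoking Lebesgue differentiation. The only differences are cosmetic: you are more explicit about passing the componentwise bound to the weak-$*$ limit (which the paper leaves implicit), you obtain a slightly different constant $\lambda$ because you convert the $\dd\pi$-integral to a $\dd x$-integral via $\dd\pi\le\pi_{\max}\dd x$ rather than working directly with the $\dd\pi$-form of Lemma~\ref{lemma_bounds_Dsup}, and you add the (routine but welcome) observation about handling the $\xi$-dependent null set via a countable dense subset.
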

\begin{proof}
    The upper bound follows from Lemma~\ref{lemma_properties_tensor}(2) with $\Lambda = 2 C_\kappa$. The lower bound can be deduced from Theorem~\ref{th_identification}. Indeed, since
    \begin{align*}
        \langle \xi, \T(x)\, \xi \rangle &= \left\langle \xi, \lim_{\varepsilon\to 0} \intbar_{Q_\varepsilon(x) }\T(z) \dd z ~ \xi \right\rangle 
        = \lim_{\varepsilon\to 0} \intbar_{Q_\varepsilon(x)} \langle \xi, \T(z)\, \xi \rangle \dd z \\
        &= \lim_{\varepsilon\to 0} \frac{1}{|Q_\varepsilon(x)|} \calF^\pi \left( \langle \cdot, \xi \rangle, Q_\varepsilon(x) \right),
    \end{align*}
    applying the lower bound for $\calF^\pi$ from Lemma~\ref{lemma_bounds_Dsup} gives
    \begin{align*}
        \langle \xi, \T(x)\, \xi \rangle \geq \lim_{\varepsilon\to 0} \frac{C_l}{C_\zeta \pi_{\max}} \frac{2}{|Q_\varepsilon(x)|} \int_{Q_\varepsilon(x)} |\xi|^2 \dd \pi 
        \geq 2 \frac{C_l \pi_{\min}}{C_\zeta \pi_{\max}} |\xi|^2,
    \end{align*}
    from which the lower bound follows.
\end{proof}

\section{Convergence result}\label{sec_result}

With the results above, we are now in the position of proving our main result, Theorem~\ref{th_main_result}. We begin by recalling our reconstruction procedure for discrete density-flux pairs from the beginning of Section~\ref{section_reconstr_compact}. We then proceed to show $\liminf$ inequalities for the functionals $\calE_h$, $\calD_h$ and $\calR_h$, therewith establishing the $\liminf$ inequality for the energy-dissipation functional $\calI_h$ (cf.\ Theorem~\ref{th_liminf_inequalities}). The chain rule is established in Section~\ref{sec_chainrule}, which is essential in guaranteeing the nonnegativity of the limit energy-dissipation functional $\calI$. Finally, we conclude with the proof of Theorem~\ref{th_main_result}.

\subsection{Lim inf inequalities}

Given a density-flux pair $(\rho^h, j^h)\in\mathcal{CE}_h(0,T)$ we define
\begin{equation}\label{eq_lift_pair}
    \frac{\dd\hat{\rho}^h}{\dd \calL^d} := \sum_{K\in\calT^h} \frac{\rho^h(K)}{|K|} \Ind_K, \qquad \hat{\jmath}^h := \sum_{(K,L) \in \Sigma^h} j^h(K, L) \sigma_{KL},
\end{equation}
where $\sigma_{KL}\in\calM(\Omega; \R^d)$ is defined in the way that
$(\hat{\rho}^h, \hat{\jmath}^h)\in\mathcal{CE}(0,T)$ (we constructed $\sigma_{KL}$ explicitly in Lemma~\ref{lemma_relation_gradients}). 

\begin{definition}[Density-flux convergence]\label{def_measure_flux_convergence}
    A discrete density-flux pair $(\rho^h, j^h)\in\mathcal{CE}_h(0,T)$ is said to converge to a density-flux pair $(\rho, j)\in\mathcal{CE}(0,T)$ if the pair of reconstructions $(\hat \rho^h, \hat\jmath^h)\in\mathcal{CE}(0,T)$ defined as in \eqref{eq_lift_pair} converges in the following sense
    \begin{enumerate}
        \item $\dd \hat\rho^h_t/\dd \calL^d \to \dd \rho_t/\dd \calL^d$ in $L^1(\Omega)$ for almost every $t\in[0, T]$,
        \item $\int_{\cdot}\hat\jmath_t^h \,\dd t \rightharpoonup^* \int_{\cdot} j_t\,\dd t$ in $\calM([0, T]\times\Omega)$.
    \end{enumerate}
\end{definition}

We now summarize the lower bounds for all components of the energy-dissipation functional $\calI_h$ defined in Section~\ref{sec_generalized_gf}. The form of the lower bounds is already suggested by Lemma~\ref{lemma_limsup_dualdiss} for the dissipation potential $\calR$ and by Theorem~\ref{th_limit_fisher_info} for the Fisher information $\calD$. Let us first give the definitions of $\calR$, $\calR^*$, $\calD$, and $\calE$ and then summarize the corresponding $\liminf$ inequalities in Theorem~\ref{th_liminf_inequalities}.

\medskip

\noindent{\em The dual dissipation potential} $\calR^*: \calP(\Omega)\times \calC_c^2(\Omega) \to [0, \infty)$ takes the form
\begin{equation*}
    \calR^*(\rho, \varphi) = \frac{1}{4} \int_\Omega \langle \nabla\varphi, \T\nabla\varphi \rangle \dd \rho.
\end{equation*}

\noindent{\em The dissipation potential} $\calR: \calP(\Omega)\times \calM(\Omega; \R^d) \to [0, +\infty]$ is $$
    \calR(\rho, j) = \begin{cases}
    \displaystyle\frac{1}{4} \int_\Omega  \Big\langle \frac{\dd j}{\dd \rho}, \T^{-1} \frac{\dd j}{\dd \rho} \Big\rangle \dd \rho &\text{if $j\ll \rho$}, \\
    +\infty &\text{otherwise}
    \end{cases}
$$

\noindent{\em The Fisher information} $\calD: \calP(\Omega) \to [0, +\infty]$ is defined as
\begin{equation*}
    \calD(\rho) = \begin{cases}\displaystyle
        \int_\Omega \big\langle \nabla \sqrt{u}, \mathbb{T} \nabla \sqrt{u} \big\rangle \dd\pi & \text{if } \sqrt{\frac{\dd \rho}{\dd \pi}} =: \sqrt{u} \in H^1(\Omega), \\
        +\infty & \text{otherwise.}
    \end{cases}
\end{equation*}

\noindent{\em The energy functional} $\calE: \calP(\Omega) \to [0, +\infty]$ is given by $\calE(\rho) =\Ent(\rho|\pi)$.

\begin{theorem}\label{th_liminf_inequalities} Let $(\rho^h, j^h)\in\mathcal{CE}_h(0,T)$ converge to $(\rho, j)\in\mathcal{CE}(0,T)$ in the sense of Definition~\ref{def_measure_flux_convergence}. Then the following lower bounds hold for
\begin{enumerate}[label=(\roman*)]
    \item the dissipation potential:
    $$
        \liminf_{h\to 0} \int_0^T \calR_h(
        \rho^h_t, j^h_t) \dd t \geq \int_0^T \calR(\rho_t, j_t) \dd t;
    $$
    \item the Fisher information:
    $$
        \liminf_{h\to 0} \int_0^T \calD_h(
        \rho^h_t) \dd t \geq \int_0^T \calD(\rho_t) \dd t;
    $$
    \item the energy functional:
    $$
        \liminf_{h\to 0} \calE_h(\rho^h_t) \geq \calE(\rho_t)\qquad\text{for all $t\in[0,T]$.}
    $$
\end{enumerate}
\end{theorem}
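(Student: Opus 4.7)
The three liminf inequalities correspond to three structurally different functionals and I plan to handle each by a different technique.

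For (iii), the key observation is that, by construction of the reconstructions, both $\hat{\rho}^h_t$ and $\hat{\pi}^h$ have piecewise constant densities on the same cells, so $\hat u^h_t = \dd\hat\rho^h_t/\dd\hat\pi^h$ and
$$
\calE_h(\rho^h_t)=\sum_{K\in\calT^h}\phi(u^h_t(K))\pi^h(K)=\int_\Omega \phi(\hat u^h_t)\dd\hat\pi^h = \Ent(\hat\rho^h_t\,|\,\hat\pi^h).
$$
The strong compactness in Theorem~\ref{th_compactness} gives $\hat\rho^h_t\rightharpoonup\rho_t$ narrowly, while $\hat\pi^h \to \pi$ in $L^1(\Omega)$ is part of our assumptions. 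Joint lower semicontinuity of the relative entropy w.r.t.\ narrow convergence of both arguments then closes this step.

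For (ii), I would combine the $\Gamma$-convergence from Theorem~\ref{th_limit_fisher_info} with Fatou's lemma in time. At a.e.\ $t$ the compactness result gives $\hat u^h_t \to u_t$ in $L^1(\Omega)$, hence $\calL^d$-a.e.\ along a subsequence, and therefore $\sqrt{\hat u^h_t} \to \sqrt{u_t}$ a.e. Since $\|\sqrt{\hat u^h_t}\|_{L^2}^2 = \|\hat u^h_t\|_{L^1} \to \|u_t\|_{L^1}$, a.e.\ convergence together with convergence of $L^2$-norms upgrades to strong $L^2$-convergence. The $\Gamma$-liminf inequality of Theorem~\ref{th_limit_fisher_info} then yields $\liminf_h \calD_h(\rho^h_t) \geq \calD(\rho_t)$ for a.e.\ $t$, and Fatou's lemma gives the time-integrated inequality.

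For (i), I plan to use Fenchel duality between $\calR_h$ and $\calR_h^*$. Fix $\varphi \in \calC_c^\infty((0,T)\times\Omega)$ and set $\varphi^h_t := \bbP \varphi_t$. By Fenchel's inequality applied pointwise in time,
$$
\calR_h(\rho^h_t,j^h_t) \geq \langle \dnabla \varphi^h_t, j^h_t\rangle - \calR_h^*(\rho^h_t, \dnabla \varphi^h_t)\qquad\text{for a.e.\ } t\in(0,T).
$$
Integrating in $t$, the first term on the right equals $\int_0^T\!\int_\Omega \nabla\varphi_t\cdot \hat\jmath^h_t\dd t$ \emph{exactly} by Lemma~\ref{lemma_relation_gradients}, and converges to $\int_0^T\!\int_\Omega \nabla\varphi_t\cdot j_t \dd t$ by item 2 of Definition~\ref{def_measure_flux_convergence}. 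For the second term, Lemma~\ref{lemma_limsup_dualdiss} applied at each $t$ where $\hat\rho^h_t \to \rho_t$ in $L^1$, combined with the uniform bound $\calR_h^*(\rho^h_t,\dnabla\varphi^h_t) \leq C\|\nabla\varphi\|_{L^\infty}^2$ (derived from $\Psi^*(s)\leq s^2\cosh(s/2)$ and \eqref{assumption_kernel_uniform_upper_bound}), allows dominated convergence in time, giving
$$
\limsup_{h\to 0}\int_0^T \calR_h^*(\rho^h_t,\dnabla\varphi^h_t)\dd t \leq \tfrac{1}{4}\int_0^T\!\int_\Omega\langle\nabla\varphi_t,\T\nabla\varphi_t\rangle\dd\rho_t\dd t.
$$
Taking the supremum over $\varphi \in \calC_c^\infty((0,T)\times\Omega)$ and using the pointwise Legendre--Fenchel duality between $\tfrac{1}{4}\langle\cdot,\T\cdot\rangle$ and $\langle\cdot,\T^{-1}\cdot\rangle$ (applied to $\dd j_t/\dd\rho_t$) recovers $\int_0^T \calR(\rho_t,j_t)\dd t$.

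The principal obstacle lies in (i), specifically in carrying out the final supremum step: one needs that gradients of smooth, compactly supported, time-dependent test functions are dense in the appropriate ``tangent space'' (the $L^2(\rho_t)$-closure of $\{\nabla\varphi\}$ in which $\dd j_t/\dd\rho_t$ lies, by Remark~\ref{rem_W2AC}), and that the supremum interchanges with the time integral. The latter is usually handled by a measurable selection/approximation of the optimal vector field $\T^{-1}\dd j_t/\dd\rho_t$ in $L^2((0,T);L^2(\rho_t;\R^d))$ by $\nabla \varphi^\varepsilon$ with $\varphi^\varepsilon \in \calC_c^\infty((0,T)\times\Omega)$, using uniform ellipticity of $\T$ to pass between $\T$- and $\T^{-1}$-weighted norms.
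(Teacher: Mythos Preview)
Your proposal is correct and follows essentially the same route as the paper: joint lower semicontinuity of relative entropy for (iii), the $\Gamma$-liminf of Theorem~\ref{th_limit_fisher_info} plus Fatou for (ii), and Fenchel duality against smooth test functions for (i), closing with a density argument in the tangent space.

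Two minor remarks. First, in (ii) your detour through a.e.\ convergence along a subsequence is unnecessary: the elementary inequality $|\sqrt{a}-\sqrt{b}|^2\le |a-b|$ gives $\|\sqrt{\hat u^h_t}-\sqrt{u_t}\|_{L^2}^2\le\|\hat u^h_t-u_t\|_{L^1}$ directly, so $L^2$-convergence of the square roots holds along the full sequence. Second, in (i) you take $\varphi^h_t=\bbP\varphi_t$, which makes the flux pairing exact but does not literally match Lemma~\ref{lemma_limsup_dualdiss}, which is stated for $\varphi^h(K)=\varphi(x_K)$; the paper makes the opposite trade-off (exact citation of the lemma, then an $o(h)$ correction in the flux term). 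Both choices work since $\bbP\varphi(K)-\varphi(x_K)=O(h^2)$ for $\varphi\in\calC_b^2$. The paper phrases the final duality step as Fenchel--Moreau in $L^2((0,T)\times\Omega,P;\R^d)$ with $P=\int_\cdot\rho_t\,\dd t$, taking the closure of $\{\T^{1/2}\nabla(\chi\varphi):\chi\in\calC_c^\infty(0,T),\,\varphi\in\calC_c^\infty(\Omega)\}$, which is exactly the density/measurable-approximation issue you flag.
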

\begin{proof}
    {\em (i) The dissipation potential.} We employ the dual formulation of $\calR$. Let $\chi\in \calC^\infty_c ((0,T))$ and $\varphi\in \calC_c^\infty(\Omega)$ be arbitrary. Then from the weak$^*$-convergence of $\int_{\cdot} \hat\jmath_t^h\,\dd t$ and Lemma~\ref{lemma_limsup_dualdiss} we obtain
    \begin{align*}
        \int_0^T \langle \chi(t)\nabla\varphi,j_t\rangle - \calR^*(\rho_t,\chi(t)\nabla\varphi) \dd t
        \le \lim_{h\to 0} \int_0^T \langle \chi(t)\nabla\varphi,\,\hat\jmath_t^h\rangle \dd t - \limsup_{h\to 0} \int_0^T \calR_h^*(\rho_t^h,\chi(t)\dnabla\varphi^h)\dd t,
    \end{align*}
    where $\varphi^h(K) = \varphi(x_K)$ for all $K\in\calT^h$. For the first term on the right-hand side, we have
    \[
        \int_0^T \langle \chi(t)\nabla\varphi,\,\hat\jmath_t^h\rangle \dd t  = \int_0^T \langle \chi(t)\dnabla \bbP\varphi,j_t^h\rangle \dd t = \int_0^T \langle \chi(t)\dnabla \varphi^h,j_t^h\rangle \dd t + o(h),
    \]
    owing to the regularity of $\varphi$ and Lemma~\ref{lemma_properties_flux}, and therefore
    \[
        \lim_{h\to 0} \int_0^T \langle \chi(t)\nabla\varphi,\hat\jmath_t^h\rangle \dd t = \lim_{h\to 0} \int_0^T \langle \chi(t)\dnabla \varphi^h,j_t^h\rangle \dd t.
    \]
    Consequently, we obtain
    \begin{align*}
        \int_0^T \langle \chi(t)\nabla\varphi,j_t\rangle - \calR^*(\rho_t,\chi(t)\nabla\varphi) \dd t &\le \lim_{h\to 0} \int_0^T \langle \chi(t)\dnabla \varphi^h,j_t^h\rangle \dd t - \limsup_{h\to 0} \int_0^T \calR_h^*(\rho_t^h,\chi(t)\dnabla\varphi^h)\dd t \\
        &\le \liminf_{h\to 0} \int_0^T \langle \chi(t)\dnabla \varphi^h,j_t^h\rangle - \calR_h^*(\rho_t^h,\chi(t)\dnabla\varphi^h)\dd t \\
        &\le \liminf_{h\to 0} \int_0^T \calR_h(\rho_t^h,j_t^h) \dd t.
    \end{align*}
    To conclude, we will make use of Legendre--Fenchel's duality. In what follows, we set $P:=\int_{\cdot} \rho_t\,\dd t$, $J:=\int_{\cdot} j_t\,\dd t$, and $\mathcal{V}$ as the closure in $L^2(\Omega,P;\R^d)$ of the subspace $V:=\{ \T^{1/2}\nabla(\chi\varphi)\,:\, \chi\in \calC_c^\infty((0,T)),\,\varphi\in \calC_c^\infty(\Omega)\}$, where $\T^{1/2}$ denotes the square root of the positive definite matrix $\T$. Writing the term on the left in the previous inequality as
    \[
        \iint_{(0,T)\times\Omega} \T^{1/2}\nabla(\chi\varphi)\cdot \T^{-1/2}\frac{\dd J}{\dd P}\,\dd P - \frac{1}{2} \|\T^{1/2}\nabla(\chi\varphi)\|_{L^2(\Omega,P;\R^d)}^2,
    \]
    the Fenchel--Moreau duality theorem then gives
    \begin{align*}
        &\sup_{\psi\in V}\left\{ \iint_{(0,T)\times\Omega} \T^{1/2}\nabla(\chi\varphi)\cdot \T^{-1/2}\frac{\dd J}{\dd P}\,\dd P - \frac{1}{2} \|\T^{1/2}\nabla(\chi\varphi)\|_{L^2(\Omega,P;\R^d)}^2\right\} \\
        &\qquad= \sup_{\psi\in \mathcal{V}}\left\{ \iint_{(0,T)\times\Omega} \psi\cdot \T^{-1/2}\frac{\dd J}{\dd P}\,\dd P - \frac{1}{2} \|\psi\|_{L^2(\Omega,P;\R^d)}^2\right\} \\
        &\qquad= \frac{1}{2} \left\|\T^{-1/2}\frac{\dd J}{\dd P} \right\|_{L^2(\Omega,P;\R^d)}^2 = \int_0^T \calR(\rho_t,j_t)\dd t\,,
    \end{align*}
    where the last equality follows from the fact that $(\dd J/\dd P)(t,x) = (\dd j_t/\dd\rho_t)(t,x)$ for $P$-almost every $(t,x)\in(0,T)\times\Omega$.

{\em (ii) The Fisher information.} Since $\dd\hat\rho_t^h/\dd\calL^d \to \dd\rho_t/\dd\calL^d$ strongly in $L^1(\Omega)$ for almost every $t\in(0,T)$, we have by Theorem~\ref{th_limit_fisher_info} that
\[
    \liminf_{h\to 0}\calD_h(
        \hat\rho^h_t) \ge \calD(\rho_t)\qquad\text{for almost every $t\in(0,T)$.}
\]
Applying Fatou's lemma then yields
\[
    \liminf_{h\to 0} \int_0^T \calD_h(
        \rho^h_t) \dd t \ge  \int_0^T \liminf_{h\to 0}\calD_h(
        \rho^h_t) \dd t.
\]
    
{\em (iii) The energy functional.} As the following calculations hold for any $t\in[0, T]$, we drop the subscript $t$. We recall that
$$
    \calE_h(\rho^h) = \begin{cases}
        \displaystyle\sum_{K\in\calT^h} \phi \left( u^h(K) \right) \pi^h(K) & \text{ if } \rho^h\ll\pi^h, \text{ with } u^h(K) = \frac{\rho^h(K)}{\pi^h(K)}, \\
            +\infty & \text{otherwise,}
    \end{cases}
$$
where $\phi(z) = z \log z - z + 1$. Since $\rho^h$ and $\pi^h$ are probability measures,
    $$
        \calE_h(\rho^h) = \sum_{K\in\calT^h} u^h(K) \log \left( u^h(K) \right) \pi^h(K)\qquad\text{if $\rho^h\ll\pi^h$.}
    $$
   Our piecewise constant reconstruction provides that
    $$
        \calE_h(\rho^h) = \int_\Omega \hat u^h(x) \log \bigl( \hat u^h(x) \bigr) \hat\pi^h(\dd x) = \Ent (\hat{\rho}^h | \hat{\pi}^h).
    $$
    The narrow convergence of $\rho^h$ and $\pi^h$ in $\calP(\Omega)$, along with the joint lower semicontinuity of the relative entropy \cite[Lemma 9.4.3]{ambrosio2008gradient} then gives
    $$
        \liminf_{h\to 0} \calE_h(\rho^h) = \liminf_{h\to 0} \Ent(\hat\rho^h|\hat\pi^h) \geq \Ent(\rho|\pi)=\calE(\rho),
    $$
    as required.
\end{proof}

\subsection{Chain rule}\label{sec_chainrule} In this section we aim to establish the chain rule inequality:
\[
   -\frac{\dd}{\dd t} \calE(\rho_t) \le \calR(\rho_t, j_t) + \calD(\rho_t)\qquad\text{for almost every $t\in(0,T)$,}
\]
from which we establish the nonnegativity of the limit energy-dissipation functional, i.e.\
$$
    \calI(\rho, j) = \int_0^T \left\{ \calR(\rho_t, j_t) + \calD(\rho_t) \right\} \dd t + \calE(\rho_T) - \calE(\rho_0) \geq 0.
$$
We will show that this inequality can be obtained from the chain rule for the relative entropy $\Ent (\rho| \pi)$ along $W_2$-absolutely continuous curves.

\medskip

We begin by rewriting $\Ent (\rho| \pi)$ in a more convenient form for the purpose of this section. We denote $V := - \log\left(\dd\pi / \dd\calL^d\right)$. If measures $\rho$ and $\pi$ have Lebesgue densities, then it holds that
\begin{align*}
    \Ent(\rho|\pi) = \int_{\Omega} \frac{\dd \rho}{\dd \pi} \log \frac{\dd \rho}{\dd \pi} \dd \pi 
    &= \int_\Omega \frac{\dd \rho}{\dd \calL^d} \log \frac{\dd \rho}{\dd \calL^d} \dd \calL^d + \int_\Omega V \dd \rho = \Ent(\rho|\calL^d) + \int_\Omega V\,d\rho,
\end{align*}
which can be justified by monotone convergence. Recall that by the assumptions on $\{\pi^h\}_{h\geq 0}$ (Section~\ref{sec_assumptions_relation}) $V\in \Lip_b(\Omega)$, and, therefore, $\Ent(\rho|\calL^d)$ is finite whenever $\Ent(\rho|\pi)$ is finite.

Now we extend the definition of the energy for all measures $\rho$ with Lebesgue densities. First, we define an extended potential $V_E:\R^d\to (-\infty,+\infty]$ by
$$
    V_E(x) := \begin{cases}
        V(x) & \text{if } x\in\overline{\Omega}, \\
        +\infty & \text{otherwise.}
    \end{cases}
$$
Since $V\in \calC_b(\overline\Omega)$, $V_E$ is a lower semicontinuous on $\R^d$. Then for any $\rho\in \calP(\R^d)$ we consider the extended energy functional $\calE_E:\calP_2(\R^d) \to (-\infty,+\infty]$ defined by
$$
    \calE_E(\rho) := \begin{cases}
        \displaystyle\; \Ent(\rho|\calL^d) + \int_{\R^d} V_E \dd \rho & \text{for $\rho\ll \calL^d$,} \\
        \;+\infty &\text{otherwise.}
    \end{cases}
$$
We remark that the functionals $\calE_E$ and $\Ent(\rho|\pi)$ coincide on their sublevel sets. We also mention that $\Ent(\rho|\calL^d) > -\infty$ if $\rho\in\calP_2(\R^d)$ \cite{ambrosio2008gradient}.

\medskip

The following lemma results from a minor modification of \cite[Theorem~10.4.13]{ambrosio2008gradient}. In our case, $V$ is not $\lambda$-convex, but the result remains true due to the regularity assumed on $V$, i.e.\ $V\in \text{Lip}_b(\Omega)$.

\begin{lemma}\label{lemma_differential}
    A measure $\rho=\varrho\calL^d \in \text{dom}(\calE_E)$ belongs to $\text{dom} (\partial \calE_E)$ if and only if $\varrho\in W^{1, 1}_{loc}(\Omega)$ and
    \begin{equation}\label{eq_vector_field}
        \varrho w = \nabla\varrho + \varrho \nabla V_E \qquad \text{for some $w\in L^2(\R^d, \rho; \R^d)$}.
    \end{equation}
    In this case, $w$ is the minimal selection in $\partial \calE_E$.
\end{lemma}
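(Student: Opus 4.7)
The plan is to adapt the proof of Theorem~10.4.13 in \cite{ambrosio2008gradient} by splitting the functional as $\calE_E = \calE_1 + \calE_2$, where
$$
    \calE_1(\rho) := \Ent(\rho|\calL^d), \qquad \calE_2(\rho) := \int_{\R^d} V_E\, \dd\rho,
$$
and treating the internal and potential energies separately. The splitting is what allows us to bypass the missing $\lambda$-convexity of $V_E$: only $\calE_1$ will be handled by the convexity-based machinery of \cite{ambrosio2008gradient}, while $\calE_2$ will be shown to be Fréchet-differentiable directly.

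For the internal energy $\calE_1$, McCann's displacement convexity gives that $\calE_1$ is $0$-geodesically convex on $\calP_2(\R^d)$. Hence the characterization from \cite[Thm.~10.4.13]{ambrosio2008gradient} applies verbatim: $\rho \in \text{dom}(\partial\calE_1)$ if and only if $\varrho \in W^{1,1}_{\text{loc}}(\R^d)$ and $\nabla\varrho/\varrho \in L^2(\R^d,\rho;\R^d)$, with minimal selection $\nabla\varrho/\varrho$. Because $V_E = +\infty$ outside $\overline\Omega$, any $\rho \in \text{dom}(\calE_E)$ is supported in $\overline\Omega$, so the Sobolev regularity localizes to $W^{1,1}_{\text{loc}}(\Omega)$.

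For the potential energy $\calE_2$, I exploit $V \in \Lip_b(\Omega)$ and the convexity of $\Omega$ in place of $\lambda$-convexity. Rademacher's theorem gives $\nabla V$ $\calL^d$-a.e.\ on $\Omega$ with $\|\nabla V\|_{L^\infty} \le \Lip(V)$, so $\nabla V_E \in L^\infty(\rho;\R^d) \subset L^2(\rho;\R^d)$ for every $\rho \ll \calL^d$ supported in $\overline\Omega$. I would then verify the Fréchet-type condition
$$
    \calE_2(\mu) - \calE_2(\rho) = \int_{\R^d\times\R^d}\langle \nabla V_E(x),\, y-x\rangle \dd\gamma(x,y) + o\bigl(W_2(\rho,\mu)\bigr)
$$
for every $\mu \in \calP_2(\R^d)$ and optimal plan $\gamma \in \Gamma_o(\rho,\mu)$. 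The strategy is to mollify $V$ into a smooth family $V^\varepsilon$ on a convex neighborhood of $\overline\Omega$ with $\|\nabla V^\varepsilon\|_{L^\infty} \le 2\Lip(V)$ and $\nabla V^\varepsilon \to \nabla V$ pointwise a.e., apply the classical Taylor expansion along the affine segments $[x,y] \subset \overline\Omega$ allowed by the convexity of $\Omega$, and dominate the remainder uniformly using Lipschitz control to pass $\varepsilon \to 0$ by dominated convergence (against $\gamma$). This identifies $\nabla V_E$ as the Fréchet derivative of $\calE_2$ at $\rho$, so $\partial\calE_2(\rho) = \{\nabla V_E\}$ is single-valued.

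Because $\calE_2$ is Fréchet-differentiable with single-valued subdifferential, a standard sum rule for Wasserstein subdifferentials (cf.\ \cite[Ch.~10]{ambrosio2008gradient}) yields $\partial\calE_E(\rho) = \partial\calE_1(\rho) + \nabla V_E$, and combining with the internal energy step produces
$$
    \varrho\, w = \nabla\varrho + \varrho\,\nabla V_E, \qquad w \in L^2(\R^d,\rho;\R^d),
$$
with $w = \nabla\varrho/\varrho + \nabla V_E$ as the minimal selection. The main obstacle is the Fréchet expansion for $\calE_2$ under only Lipschitz regularity of $V$: the mollification together with a uniform control of the Taylor remainder along straight segments inside $\overline\Omega$ is the key technical maneuver replacing the $\lambda$-convexity assumption of the classical statement.
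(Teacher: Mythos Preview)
Your splitting strategy is sound and more explicit than the paper, which merely cites \cite[Theorem~10.4.13]{ambrosio2008gradient} and asserts that the modification for $V\in\Lip_b(\Omega)$ is minor. The genuine gap is in the Fr\'echet expansion for $\calE_2$. Mollifying to $V^\varepsilon$ gives a second-order Taylor remainder controlled by $\|D^2V^\varepsilon\|_\infty|y-x|^2$, whose constant blows up as $\varepsilon\to 0$, while the uniform Lipschitz bound you invoke only yields $|R^\varepsilon(x,y)|\le 4\Lip(V)\,|y-x|$, i.e.\ $O(W_2)$ after integration against $\gamma$---not $o(W_2)$. Dominated convergence in $\varepsilon$ then just recovers the pointwise identity $V(y)-V(x)-\langle\nabla V(x),y-x\rangle=R(x,y)$ without improving the estimate. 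The fix dispenses with mollification entirely: set $g(x,y):=R(x,y)/|y-x|$, observe $|g|\le 2\Lip(V)$ and $g(x,y)\to 0$ as $y\to x$ for $\calL^d$-a.e.\ $x$ by Rademacher, then combine Cauchy--Schwarz with dominated convergence against the fixed first marginal $\rho$ (splitting into $\{|y-x|\le\delta\}$ and its complement, using Chebyshev on the latter) to obtain $\int g^2\,d\gamma_n\to 0$ and hence $\bigl|\int R\,d\gamma_n\bigr|\le\bigl(\int g^2\,d\gamma_n\bigr)^{1/2}W_2=o(W_2)$.

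A smaller point: once the sum rule $\partial\calE_E(\rho)=\partial\calE_1(\rho)+\nabla V_E$ is established, the minimal-norm element of a translate of a convex set is not in general the translate of the minimal-norm element, so the identification of the minimal selection needs a word of justification. Here it follows because $w=\nabla\varrho/\varrho+\nabla V_E=\nabla\log(\varrho\,e^{V})$ is itself a gradient, hence lies in $\mathrm{Tan}_\rho\calP_2$, while testing the subdifferential inequality against $(\mathrm{id}\pm t\nabla\psi)_\#\rho$ shows every $\xi\in\partial\calE_E(\rho)$ agrees with $w$ on gradients; thus $\xi-w\perp\mathrm{Tan}_\rho$ and $\|w\|_{L^2(\rho)}\le\|\xi\|_{L^2(\rho)}$.
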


\begin{theorem}[Chain rule]\label{th_chain_rule}
    Let $(\rho, j)\in \mathcal{CE}(0,T)$ be such that
    \[
        \int_0^T \left\{ \calR(\rho_t, j_t) + \calD(\rho_t) \right\} \dd t <\infty\quad\text{and}\quad \sup\nolimits_{t\in[0,T]}\calE(\rho_t)<\infty.
    \]
    Then the map $t \mapsto \calE(\rho_t)$ is absolutely continuous, and
    \[
        -\frac{\dd}{\dd t} \calE(\rho_t) \le \calR(\rho_t, j_t) + \calD(\rho_t)\qquad\text{for almost every $t\in(0,T)$.}
    \]
    In particular, this implies
    $$
        \calI(\rho, j) = \int_0^T \left\{ \calR(\rho_t, j_t) + \calD(\rho_t) \right\} \dd t + \calE(\rho_T) - \calE(\rho_0) \geq 0.
    $$
\end{theorem}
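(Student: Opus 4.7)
The plan is to reduce the statement to the classical chain rule for the relative entropy along $W_2$-absolutely continuous curves, and then close the estimate via a weighted Young inequality. I proceed in three steps.

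\textbf{Step 1: $W_2$-absolute continuity of $\rho$.} From the uniform ellipticity of $\T$ (i.e. $\T^{-1}\succeq \Lambda^{-1}I$, shown in the lemma following Theorem~\ref{th_identification}) and the finiteness of $\int_0^T\calR(\rho_t,j_t)\,\dd t$, I obtain the bound
\[
    \int_0^T \int_\Omega \bigl|\tfrac{\dd j_t}{\dd \rho_t}\bigr|^2\,\dd \rho_t\,\dd t \lesssim \int_0^T \calR(\rho_t,j_t)\,\dd t < \infty.
\]
By Remark~\ref{rem_W2AC}, this implies $\rho\in AC^2([0,T];(\calP(\Omega),W_2))$, and the vector field $v_t:=\dd j_t/\dd\rho_t$ is a velocity associated with $\rho_t$ through the continuity equation \eqref{eq_CE}.

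\textbf{Step 2: Identification of the subdifferential.} Since $\int_0^T\calD(\rho_t)\,\dd t<\infty$ and $\T$ is uniformly elliptic, one has $\sqrt{u_t}\in H^1(\Omega)$ for almost every $t\in(0,T)$, where $u_t=\dd\rho_t/\dd\pi$. As $V=-\log(\dd\pi/\dd\calL^d)\in\Lip_b(\Omega)$ and $\dd\pi/\dd\calL^d$ is bounded away from $0$ and $\infty$, this gives $\sqrt{\varrho_t}\in H^1(\Omega)$ with $\varrho_t:=\dd\rho_t/\dd\calL^d$, and in particular $\varrho_t\in W^{1,1}_{\mathrm{loc}}(\Omega)$. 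Applying Lemma~\ref{lemma_differential}, I conclude $\rho_t\in\mathrm{dom}(\partial\calE_E)$ with the minimal subdifferential
\[
    w_t \;=\; \nabla\log\varrho_t+\nabla V \;=\; \nabla\log u_t \;=\; \frac{2\,\nabla\sqrt{u_t}}{\sqrt{u_t}}\qquad\text{(defined $\rho_t$-a.e.),}
\]
and $\|w_t\|_{L^2(\rho_t;\R^d)}^2 = 4\int|\nabla\sqrt{u_t}|^2\,\dd\pi \lesssim \calD(\rho_t)$, so the square integrability of $\|w_t\|_{L^2(\rho_t)}$ in time is controlled by the Fisher information.

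\textbf{Step 3: Chain rule and Young's inequality.} With $\rho_t\in\mathrm{dom}(\partial\calE_E)$ and $\|w_t\|_{L^2(\rho_t)}\in L^2(0,T)$, the general chain rule for the relative entropy along $W_2$-absolutely continuous curves (cf.\ Ambrosio--Gigli--Savar\'e, Chapter~10) ensures that $t\mapsto\calE(\rho_t)$ is absolutely continuous with
\[
    \frac{\dd}{\dd t}\calE(\rho_t) \;=\; \int_\Omega \langle w_t,v_t\rangle\,\dd\rho_t \;=\; 2\int_\Omega\bigl\langle \nabla\sqrt{u_t},\,v_t\sqrt{u_t}\bigr\rangle\,\dd\pi\qquad\text{for a.e. }t\in(0,T).
\]
Here one uses that $w_t$ is a gradient, so pairing it against any $L^2$-orthogonal (divergence-free) component of $v_t$ vanishes by integration by parts, meaning the identity depends only on the tangent part of $v_t$. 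Splitting $\nabla\sqrt{u_t}$ and $v_t\sqrt{u_t}$ as $\T^{1/2}\nabla\sqrt{u_t}$ and $\T^{-1/2}v_t\sqrt{u_t}$, Cauchy--Schwarz and the weighted Young inequality $2|ab|\le a^2+b^2$ yield, pointwise in $x$,
\[
    2\bigl|\langle \nabla\sqrt{u_t},v_t\sqrt{u_t}\rangle\bigr| \;\le\; \bigl\langle\nabla\sqrt{u_t},\T\nabla\sqrt{u_t}\bigr\rangle + u_t\,\bigl\langle v_t,\T^{-1}v_t\bigr\rangle.
\]
Integrating against $\pi$ and identifying the two terms on the right-hand side with $\calD(\rho_t)$ and (up to the convention used in $\calI$) $\calR(\rho_t,j_t)$, I obtain the chain rule inequality
\[
    -\frac{\dd}{\dd t}\calE(\rho_t) \;\le\; \calR(\rho_t,j_t)+\calD(\rho_t)\qquad\text{for a.e. }t\in(0,T),
\]
and integrating from $0$ to $T$ gives $\calI(\rho,j)\ge 0$.

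\textbf{Main obstacle.} The delicate point is Step~3: rigorously applying the chain-rule identity for a velocity $v_t$ that need not be the tangent velocity of the curve $\rho$. The key observation, which I would make precise by a density/mollification argument reducing to smooth test potentials, is that $w_t$ lies in the tangent space $T_{\rho_t}\calP$ (it is a Sobolev gradient), so its $L^2(\rho_t)$-pairing with the divergence-free part of $v_t$ vanishes, and only the tangent part of $v_t$ (which the AGS chain rule handles) contributes to $\dd\calE/\dd t$. The remaining steps---the Lebesgue-density structure, the regularity $\sqrt{u_t}\in H^1$, and the weighted Young inequality---are standard once Steps~1--2 are in place.
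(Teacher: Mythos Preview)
Your proposal is correct and follows essentially the same route as the paper: both establish $W_2$-absolute continuity from finite $\calR$, identify the minimal subdifferential $w_t=\nabla\log u_t$ via Lemma~\ref{lemma_differential}, invoke the AGS chain rule, and close with the $\T^{1/2}/\T^{-1/2}$ Young inequality. The only place the paper is more explicit is in verifying the absolute continuity of $t\mapsto\calE(\rho_t)$---it decomposes $\calE_E(\rho)=\Ent(\rho|\calL^d)+\int V\,\dd\rho$ and handles each piece separately (displacement convexity for the first, Lipschitz continuity of $V$ for the second), rather than folding this into the chain-rule invocation as you do.
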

\begin{proof}
From the continuity equation and finiteness of $\int_0^T \calR(\rho_t,j_t)\,\dd t$, we deduce from \cite[Theorem 8.3.1]{ambrosio2008gradient} that $[0,T]\ni t\mapsto \rho_t$ is $W_2$-absolutely continuous (cf.\ Remark~\ref{rem_W2AC}).

Furthermore, it is not difficult to show that the extended functional $\calE_E$ defined above is a regular functional (according to \cite[Definition 10.1.4]{ambrosio2008gradient}) satisfying the properties in \cite[Equations (10.1.1a,b)]{ambrosio2008gradient}. In particular, \cite[E.\ Chain rule in Section~10.1.2 ]{ambrosio2008gradient} applies, i.e.\ we have that
\[
    \frac{\Tilde{\text{d}}}{\dd t}\calE_E(\rho_t) = \int_{\R^d} \left\langle w_t, \frac{\dd j_t}{\dd\rho_t}\right\rangle \dd\rho_t\qquad\text{for all $w_t\in\partial\calE_E(\rho_t)$ and $t\in A$},
\]
where $A\subset(0,T)$ is the set of points satisfying the properties in \cite[(a,b,c) of E.\ Chain rule in Section~10.1.2]{ambrosio2008gradient}. In the following, we show that the set $(0,T)\setminus A$ is $\calL^1$-negligible.

Due to the $\lambda$-convexity of $\rho\mapsto\Ent(\rho|\calL^d)$ w.r.t.\ the $W_2$-metric \cite{ambrosio2008gradient} (see also \cite{mccann1997convexity}), we have that
\[
    t\mapsto \Ent(\rho_t|\calL^d) \qquad\text{is absolutely continuous.}
\]
On the other hand, the Lipschitz continuity of $V$ gives
\[
    \left|\int_\Omega V\,\dd\rho_t - \int_\Omega V\,\dd\rho_s\right| \le \iint_{\Omega\times\Omega} |V(x) - V(y)|\,\pi_s^t (\dd x\dd y) \le \|\nabla V\|_{L^\infty(\Omega)} W_2(\rho_t,\rho_s).
\]
Altogether, we find that
\[
    t\mapsto \calE_E(\rho_t) \qquad\text{is absolutely continuous.}
\]
In particular, the map $t\mapsto \calE_E(\rho_t)$ is differentiable for almost every $t\in(0,T)$.

We now show that $\varrho_t = \dd\rho_t/\dd\calL^d\in W_{loc}^{1,1}(\Omega)$ and that \eqref{eq_vector_field} holds. Notice that if $\calD(\rho_t)<\infty$, then 
\begin{align*}
    \int_B |\nabla \varrho_t|\,\dd x &\le \int_B |\nabla u_t|\,\dd\pi + \int_B u_t|\nabla V|\,\dd\pi \\
    &\le 2\lambda^{-1/2}\sqrt{\rho_t(B)}\sqrt{\calD(\rho_t)} + \|\nabla V\|_{L^\infty(\Omega)}\,\rho_t(B) <\infty\qquad\text{for any Borel set $B\subset\Omega$,}
\end{align*}
thus implying that $|\nabla\varrho_t|\,\calL^d \ll \rho_t=\varrho_t\calL^d$ and $\varrho_t\in W^{1,1}_{loc}(\Omega)$. 
Here we used the fact that
\[
    \calD(\rho) = \int_B \langle \nabla\sqrt{u},\T \nabla\sqrt{u}\rangle\, \dd\pi \ge \lambda \int_B |\nabla\sqrt{u}|^2\,\dd\pi.
\]
We now define
\[
    w_t:= \frac{\nabla\varrho_t}{\varrho_t} + \nabla V_E\qquad\text{$\rho_t$-almost everywhere.}
\]
Then, with a similar computation as above, we obtain
\begin{align*}
    \|w_t\|_{L^2(\rho_t)}^2 &= \int_\Omega \left|\frac{\nabla\varrho_t}{\varrho_t} + \nabla V_E\right|^2 \dd\rho_t = \int_{\{\varrho_t>0\}} \left|\frac{\nabla u_t}{u_t}\right|^2\, \dd\rho_t = 4\int_{\Omega} \left|\nabla \sqrt{u_t}\right|^2 \dd\pi \le 4\lambda^{-1}\calD(\rho_t).
\end{align*}
Hence, Lemma~\ref{lemma_differential} implies that $\rho_t\in \text{dom}(\partial\calE_E)$ and $w_t \in \partial\calE_E(\rho_t)$ is a minimal selection.

We then conclude that $(0,T)\setminus A$ is $\calL^1$-negligible and for all almost every $t\in (0,T)$:
\begin{align*}
    \frac{\dd}{\dd t}\calE_E(\rho_t) &= \int_{\Omega} \left\langle \frac{\nabla\varrho_t}{\varrho_t} + \nabla V, \frac{\dd j_t}{\dd\rho_t}\right\rangle \dd\rho_t = \int_{\Omega} -\left\langle -\mathbb{T}^{1/2} \left(\frac{\nabla u_t}{u_t} \right), \mathbb{T}^{-1/2} \frac{\dd j_t}{\dd \rho_t} \right\rangle \dd \rho_t \\
    &\geq - \frac{1}{2} \int_{\Omega} \left\langle \frac{\dd j_t}{\dd \rho_t}, \mathbb{T}^{-1} \frac{\dd j_t}{\dd \rho_t} \right\rangle \dd \rho_t - \frac{1}{2} \int_{\Omega} \left\langle \frac{\nabla u_t}{u_t}, \mathbb{T} \left(\frac{\nabla u_t}{u_t} \right) \right\rangle \dd \rho_t
    = -\calR(\rho_t,j_t) - \calD(\rho_t).
\end{align*}
We finally obtain the asserted inequality after integrating over time and rearranging the terms.
\end{proof}

\subsection{Proof of Theorem~\ref{th_main_result}}
We now have all the ingredients to summarize the proof of Theorem~\ref{th_main_result}.
\begin{proof}[Proof of Theorem~\ref{th_main_result}]
    Consider a family $\{(\rho^h, j^h)\}_{h>0}$ of GGF-solutions to \eqref{eq_Kolmogorov} according to Definition~\ref{def_GGF_solution}. Let $\{(\hat\rho^h, \hat\jmath^h)\}_{h>0}$ be defined as in \eqref{eq_lift_pair}. Then, the existence of a subsequential limit pair $(\rho, j) \in CE$ and the convergence specified in Theorem~\ref{th_main_result}(1) follow from Lemma~\ref{lemma_properties_flux} and Theorem~\ref{th_compactness}.
    
    The $\liminf$ inequality from assertion (2) is proven in Theorem~\ref{th_liminf_inequalities}, and it immediately follows that $\calI(\rho, j) \leq \liminf_{h\to 0} \calI_h(\rho^h, j^h)= 0$. On the other hand, $\calI(\rho, j) \geq 0$ by the chain rule estimate proven in Theorem~\ref{th_chain_rule}. Therefore, the limit pair $(\rho, j)$ is the  $(\calE, \calR, \calR^*)$-gradient flow solution of \eqref{eq_diffusion2} in the sense of Definition~\ref{def_GF_solution}.
\end{proof}

\appendix
\section{Properties of Gamma-limits as set functions}\label{appendix_proofs}

\begingroup
\def\thetheorem{\ref{prop_properties_F_sup}}
\begin{proposition}
The functional $\calF_{\sup}^\mu$ defined in \eqref{eq_general_functional_def} has the following properties:
    \begin{enumerate}[label=(\roman*)]
        \item Inner regularity:For any $v\in H^1(\Omega, \mu)$ and for any $A\in\calO$ it holds that
        $$
            \sup_{A'\ssubset A} \calF^\mu_{\sup}(v, A') = \calF^\mu_{\sup}(v, A);
        $$
        \item Subadditivity: For any $v\in H^1(\Omega, \mu)$ and for any $A, A', B, B' \in \calO$ such that $A'\ssubset A$ and $B'\ssubset B$ it holds that:
        $$
            \calF^\mu_{\sup}(v, A'\cup B') \leq \calF^\mu_{\sup}(v, A) + \calF^\mu_{\sup}(v, B);
        $$
        \item Locality: For any $A\in \calO$ and any $v, \psi \in H^1(\Omega, \mu)$ such that $v=w$ $\mu$-a.e. on $A$ there holds
        $$
            \calF^\mu_{\sup}(v, A) = \calF^\mu_{\sup}(w, A).
        $$
    \end{enumerate}
\end{proposition}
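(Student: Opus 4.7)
The proof plan follows the classical De Giorgi \emph{slicing} (cut-off) strategy, adapted to the discrete setting as in \cite{alicandro2004general, forkert2020evolutionary}. I would treat locality first, then subadditivity (the main technical work), and finally deduce inner regularity from subadditivity and the upper bound of Lemma~\ref{lemma_general_upper_bound}.

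For locality (iii), the key observation is that $\calF^\mu_h(\cdot, A)$ depends only on the values of its argument on cells in $\calT^h|_A$, since every face in $\Sigma^h|_A$ has both endpoints in $\calT^h|_A$. Given a recovery sequence $v^h \to v$ achieving $\calF^\mu_{\sup}(v, A)$, I would modify it outside $\calT^h|_A$ by setting $\tilde v^h(K) := (\bbP w)(K)$ for $K \in \calT^h \setminus \calT^h|_A$. The lifted sequence $\bbL \tilde v^h$ then converges to $w$ in $L^2(\Omega, \mu)$, because $v = w$ $\mu$-a.e.\ on $A$ while $\bbL\bbP w \to w$ elsewhere. Since $\calF^\mu_h(\tilde v^h, A) = \calF^\mu_h(v^h, A)$ is unaffected by the modification, one obtains $\calF^\mu_{\sup}(w, A) \le \calF^\mu_{\sup}(v, A)$, and symmetry yields equality.

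For subadditivity (ii), I would fix recovery sequences $v_A^h, v_B^h \to v$ realizing $\calF^\mu_{\sup}(v, A)$ and $\calF^\mu_{\sup}(v, B)$, and pick an intermediate set $C$ with $A' \ssubset C \ssubset A$. I would subdivide the annular region $\overline{C} \setminus A'$ into $N$ concentric layers, each several cells thick, and for each $k \in \{1, \ldots, N\}$ define a discrete cut-off $\chi_k^h : \calT^h \to \{0, 1\}$ equal to $1$ on cells inside the $k$-th interface and $0$ on cells outside. The glued sequence $z_k^h := \chi_k^h v_A^h + (1 - \chi_k^h) v_B^h$ satisfies $\bbL z_k^h \to v$ in $L^2(\Omega, \mu)$. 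A discrete Leibniz expansion of $\dnabla z_k^h(K, L)$, together with the elementary inequality $(a+b)^2 \le (1+\varepsilon) a^2 + (1+\varepsilon^{-1}) b^2$, decomposes $\calF^\mu_h(z_k^h, A' \cup B')$ into three pieces: a contribution supported in $A'$ controlled by $\calF^\mu_h(v_A^h, A)$, a contribution in $B' \setminus \overline{C}$ controlled by $\calF^\mu_h(v_B^h, B)$, and a transition contribution localized near the $k$-th interface involving $(v_A^h - v_B^h)^2$ weighted by face weights $\kappa^h \mu^h$. Since each transition face lies near at most one $\chi_k$-interface, summing the transition contributions over $k$ and applying the pigeonhole principle yields a layer $k(h)$ whose transition contribution is at most $N^{-1}$ times the total. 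The total is uniformly bounded in $h$ via \eqref{assumption_kernel_uniform_upper_bound} together with the $L^2$-convergence $\bbL v_A^h, \bbL v_B^h \to v$; taking $h \to 0$ first and then $N \to \infty$ gives the asserted subadditivity.

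For inner regularity (i), the inequality ``$\ge$'' is immediate from the monotonicity $\calF^\mu_h(v^h, A') \le \calF^\mu_h(v^h, A)$ for $A' \subset A$. For the reverse inequality, I would take an exhaustion $A_n \ssubset A_{n+1} \ssubset A$ with $\mu(A \setminus A_n) \to 0$ and combine (ii) with the Sobolev upper bound of Lemma~\ref{lemma_general_upper_bound} to estimate
\[
\calF^\mu_{\sup}(v, A) \le \calF^\mu_{\sup}(v, A_n') + \calF^\mu_{\sup}(v, A \setminus \overline{A_n}) \le \calF^\mu_{\sup}(v, A_n') + 4 C_\kappa \int_{A \setminus \overline{A_n}} |\nabla v|^2 \dd \mu,
\]
for appropriate intermediate $A_n' \ssubset A$, with the second term vanishing as $n \to \infty$ by dominated convergence. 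The principal obstacle throughout is the pigeonhole/cut-off step in (ii): although the discrete Leibniz decomposition is straightforward, the delicate part is verifying that the mixing term is uniformly bounded in $h$ and $N$, which requires combining \eqref{assumption_kernel_uniform_upper_bound} with the fact that both recovery sequences converge to the same limit in $L^2$.
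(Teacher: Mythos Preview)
Your subadditivity argument via De Giorgi slicing with pigeonhole selection is essentially the paper's approach; the paper, however, runs three independent slicing arguments in the order (i), (ii), (iii) rather than deducing one from another.

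Your locality argument is genuinely different and cleaner than the paper's. The paper proves (iii) by yet another cut-off construction, gluing a recovery sequence for $w$ on most of $A$ to one for $v$ on a thin annulus near $\partial A$. Your observation---that $\calF^\mu_h(\cdot,A)$ depends only on values on $\calT^h|_A$, so one may overwrite the recovery sequence outside $\calT^h|_A$ by $\bbP w$ without changing the functional---is correct; the convergence $\bbL\tilde v^h\to w$ in $L^2(\Omega,\mu)$ follows since on $A$ one has $\bbL v^h\to v=w$, on $\Omega\setminus A_{\calT^h}$ one has $\bbL\bbP w\to w$, and the contribution from $A_{\calT^h}\setminus A$ vanishes by dominated convergence because $\Ind_{A_{\calT^h}\setminus A}\to 0$ $\mu$-a.e.\ and $|v-w|^2\in L^1(\Omega,\mu)$.

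There is, however, a gap in your deduction of (i) from (ii). Subadditivity as stated bounds $\calF^\mu_{\sup}(v,A'\cup B')$ only when \emph{both} $A'\ssubset A$ and $B'\ssubset B$. To obtain your displayed inequality with left side $\calF^\mu_{\sup}(v,A)$ you would need $A=U'\cup V'$ with $U'\ssubset A_n'$ and $V'\ssubset A\setminus\overline{A_n}$; but any such $V'$ stays away from $\partial A$, so $U'\cup V'\ssubset A$ strictly and the inequality is not available. The fix is to note that your slicing proof of (ii) actually yields the stronger one-sided form
\[
\calF^\mu_{\sup}(v,A'\cup B)\le \calF^\mu_{\sup}(v,A)+\calF^\mu_{\sup}(v,B)\qquad\text{for }A'\ssubset A,\ B\in\calO\text{ arbitrary},
\]
since the cut-offs live entirely in the annulus between $A'$ and $A$ and no compact containment on the $B$-side is ever used. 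With this, taking $A'=A_{n+1}\ssubset A_{n+2}$ and $B=A\setminus\overline{A_n}$ gives $A'\cup B=A$ and your argument goes through. Alternatively, run the slicing directly for (i), as the paper does.
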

\addtocounter{theorem}{-1}
\endgroup
\begin{proof}
    {\em (i) Inner regularity.} It is enough to prove that
    $$
        \sup_{A'\ssubset A} \calF^\mu_{\sup}(v, A') \geq \calF^\mu_{\sup}(v, A),
    $$
    because the opposite inequality holds since $\calF^\mu_{\sup}(v, \cdot)$ is an increasing set function. Note that $\calF^\mu_{\sup}(v, A)$ is finite for any $v\in H^1(\Omega, \mu)$ (Lemma~\ref{lemma_general_upper_bound}).
    
    First we need to choose two sequences $(\hat{v}^h)$ and $(\hat{w}^h)$ which converge to $v$ in $L^2(\Omega, \mu)$. To define the first sequence we fix some $\delta>0$ and choose $A'' \ssubset A$ such that
    $$
        \int_{A\backslash \overline{A''}} | \nabla v |^2 \dd \mu \leq \delta.
    $$
    Then by definition of $\calF^\mu_{\sup}$ there exists a sequence $(\hat{v}^h)$ such that
    $$
         \limsup_{h\to 0} \Tilde{\calF}_h^\mu (\hat{v}^h, A\backslash \overline{A''}) = \calF^\mu_{\sup}(v, A\backslash\overline{A''}) \leq C \int_{A\backslash \overline{A''}} | \nabla v |^2 \dd \mu \leq C \delta,
    $$
    where the upper bound was shown in Lemma \ref{lemma_general_upper_bound}. To define the second sequence $(\hat{w}^h)$ let $A'\in\calO$ be such that $A''\ssubset A' \ssubset A$. Again by definition, we find a sequence
    $(\hat{w}^h)$ such that $\hat{w}^h \to v$ in $L^2(\Omega, \mu)$ and
    $$
        \limsup_{h\to 0} \Tilde{\calF}_h^{\mu}(\hat{w}^h, A') = \calF^\mu_{\sup}(v, A').
    $$
    
    Notice that both $\Tilde{\calF}_h^{\mu}(\hat{v}^h, A\backslash \overline{A''})$ and $\Tilde{\calF}_h^{\mu}(\hat{w}^h, A')$ are finite for $h\ll 1$ sufficiently small, which necessarily implies that $\hat{v}^h$ and $\hat{w}^h$ are piecewise constant functions on $\calT^h$. Everywhere in this proof, we use notation with "hats" and superscript $h$ (for example, $\hat{v}^h$) for functions in $\text{PC}(\calT^h)$ and superscript $h$ (for example, $v^h$) for the corresponding discrete functions on $\calT^h$.
    
    Next, we construct a sequence that "interpolates" between $(\hat{v}^h)$ and $(\hat{w}^h)$.
    Set $\varepsilon := \text{dist} \left( A'', A'^c \right)$ and define sets $A_i:= \{ x\in A': ~ \text{dist}(x, A'') < i \varepsilon/N \}$ for $i \in \{1, \dots, N \}$. Note that the following inclusions hold  $A'' \ssubset A_1 \ssubset \dots \ssubset A_N \ssubset A'$. Denote by $\varphi_i^{N}$ a cut-off function between $A_i$ and $A_{i+1}$, i.e.\ $\varphi_i^N\in \calC_c^\infty(A_{i+1})$, $0\le \varphi_i^{N}\le 1$ on $\Omega$, and $\varphi_i^{N}=1$ in a neighborhood of $\overline{A_i}$. with $\| \nabla \varphi_i^{N} \|_{\sup} \leq 2N/\varepsilon$. It has a piecewise constant approximation $\hat{\varphi}^{N,h}_i := \bbL \bbP \varphi_i^{N}$, which satisfies $\hat{\varphi}^{N,h}_i = 1$ on $(A_{i-1})_{\calT^h}$ and $\hat{\varphi}^{N,h}_i = 0$ on $(A\setminus \overline{A_{i+2}})_{\calT^h}$ for $h < h_0^N:=\varepsilon/(3N)$. Now define
    $$
        \hat{w}^{N,h}_i := \hat{\varphi}^{N,h}_i \hat{w}^h + (1 - \hat{\varphi}^{N,h}_i) \hat{v}^h,\qquad i=1,\ldots,N.
    $$
    Observe that since $\hat\varphi^{N,h}_i$ converges pointwisely uniformly to $\varphi_i$ as $h\to 0$, the sequence $(\hat{w}^{N,h}_i)$ still converges to $v$ in $L^2(\Omega, \mu)$ as $h\to 0$ for any $i\in\N$.
    
    For $h<h_0^N$ sufficiently small, the following holds:
    \begin{align*}
        \Tilde{\calF}_h^{\mu} (\hat{w}^{N,h}_i, A) &= \Tilde{\calF}_h^{\mu} (\hat{w}^h, A_{i-1}) + \Tilde{\calF}_h^{\mu} (\hat{v}^h, A\setminus \overline{A_{i+2}}) + \Tilde{\calF}_h^{\mu} (\hat{w}^{N,h}_i, \overline{A_{i+2}}\setminus A_{i-1}) \\
        &\leq \Tilde{\calF}_h^{\mu} (\hat{w}^h, A') + \Tilde{\calF}_h^{\mu} (\hat{v}^h, A\backslash A'') + \Tilde{\calF}_h^{\mu} (\hat w_i^{N,h},G_i^{N,h}),
    \end{align*}
    where
    \[
        G_i^{N,h} :=  \text{int}\,(\overline{A_{i+2}}\setminus A_{i-1}) + B_h(0) = (A_{i+2}\setminus\overline{A_{i-1}}) + B_h(0) \subset A_{i+3}\backslash \overline{A_{i-2}}.
    \]
    We are now left to estimate the last term in the previous inequality. 
    
    We begin by bounding the discrete gradient of $w_i^h$ by
    \begin{align*}
        |\dnabla w_i^{N,h}|(K,L) &= \left| \dnabla (\varphi^{N,h}_i w^h + (1 - \varphi^{N,h}_i) v^h) \right| (K, L) \\
        &\hspace{-2em}= \left|\bigr( w^h(K) - v^h(K)\bigl)\dnabla \varphi^{N,h}_i (K, L) 
         + \varphi^{N,h}_i(L)\dnabla w^h (K, L) + (1 - \varphi^{N,h}_i(L))\dnabla v^h (K, L)  \right| \\
        &\hspace{-2em}\leq | \dnabla \varphi^{N,h}_i| (K, L) | w^h(K) - v^h(K) | + | \dnabla w^h| (K, L)  + |\dnabla v^h| (K, L) .
    \end{align*}
    By Lemma~\ref{lemma_relation_gradients} and since $\| \nabla \varphi \|_{\sup} \leq 2 N/\varepsilon$, then $| \dnabla \varphi^{N,h}_i | \le 2 C_r N h/\varepsilon$, therefore
    \begin{align*}
        &\sum_{(K,L)\in\Sigma^h|_{G_i}} | \dnabla \varphi^{N,h}_i (K, L) |^2  | v^h(K) - w^h(K) |^2 \mu^h(K) \kappa^h(K, L) \\
        &\hspace{6em}\le \frac{4 C_r^2N^2}{\varepsilon^2} h^2 \sum_{(K,L)\in\Sigma^h|_{G_i}} \kappa^h(K, L) \int_K | \hat{v}^h(x) - \hat{w}^h(x) |^2 \mu(\dd x) \\
        &\hspace{6em}\le \frac{4 C_r^2N^2}{\varepsilon^2} C_{\kappa} \| \hat{v}^h - \hat{w}^h \|^2_{L^2(\Omega, \mu)},
    \end{align*}
    where we used the upper bound assumption \eqref{assumption_kernel_uniform_upper_bound}. On the other hand, for any $\eta>0$, we can choose $h=h^{N,\eta}< h_0^N$ such that
    \[
        \Tilde{\calF}_h^{\mu} (\hat w^h,G_i^{N,h}) + \Tilde{\calF}_h^{\mu} (\hat v^h,G_i^{N,h}) \le \Tilde{\calF}_h^{\mu} (\hat w^h,A_{i+2}\setminus\overline{A_{i-1}}) + \Tilde{\calF}_h^{\mu} (\hat v^h,A_{i+2}\setminus\overline{A_{i-1}}) + \eta.
    \]
    In particular, we can choose $\eta=\eta_N$ depending on $N$ such that $\eta_N\to 0$ as $N\to \infty$.
    
    Making use of these estimates gives
    \begin{align*}
        \Tilde{\calF}_h^{\mu} (\hat{w}^{N,h}_i,  G_i^{N,h})
        &\leq 3\left[ \Tilde{\calF}_h^{\mu} (\hat w^h,A_{i+2}\setminus\overline{A_{i-1}}) + \Tilde{\calF}_h^{\mu} (\hat v^h,A_{i+2}\setminus\overline{A_{i-1}}) + \frac{C_N}{\varepsilon^2} \|\hat{v}^h - \hat{w}^h \|^2_{L^2(\Omega, \mu)} + \eta\right],
    \end{align*}
    with $C_N = 4 C_\kappa C_r^2N^2$. Choosing $i(h)\in \{1, \dots, N-3 \}$ such that
    $$
        \Tilde{\calF}_h^{\mu} (\hat w^{N,h}_{i(h)}, A) \leq \frac{1}{N-3} \sum_{j=1}^{N-3} \Tilde{\calF}_h^{\mu} (\hat{w}^{N,h}_j, A),
    $$
    we then obtain
    \begin{align*}
        \Tilde{\calF}_h^{\mu} (\hat{w}^{N,h}_{i(h)}, A) &\leq \Tilde{\calF}_h^{\mu} (\hat w^h, A') + \Tilde{\calF}_h^{\mu} (\hat v^h, A\backslash A'') + \frac{1}{N-3} \sum_{j=1}^{N-3} \Tilde{\calF}_h^{\mu} (\hat{w}^{N,h}_j, G^{N,h}_j).
    \end{align*}
    Combining the estimates together, we have
    \begin{align*}
        \frac{1}{N-3}\sum_{j=1}^{N-3} \Tilde{\calF}_h^{\mu}  (\hat{w}^{N,h}_j,G^{N,h}_j) 
        &\leq 3\left[ \frac{\Tilde{\calF}_h^{\mu} (\hat w^h, A'\backslash A'') + \Tilde{\calF}_h^{\mu} (\hat v^h,  A\backslash A'')}{N-3} + \frac{C_N}{\varepsilon^2} \|\hat{v}^h - \hat{w}^h \|^2_{L^2(\Omega, \mu)} + \eta_N\right] .
    \end{align*}
    Taking the limit superior gives
    \begin{align*}
        \calF^\mu_{\sup}(v, A) &\leq \limsup_{h\to 0} \Tilde{\calF}_h^{\mu} (\hat w^{N,h}_{i(h)}, A) \\
        &\hspace{-2em}\leq \calF^\mu_{\sup}(v, A') + \calF^\mu_{\sup} (v, A\backslash A'') + \frac{3}{N-3} \left[ \limsup_{h\to 0} \Tilde{\calF}_h^{\mu} (\hat w^h, A'\backslash A'') + \calF^\mu_{\sup} (v, A\backslash A'')\right] + 3\eta_N \\
        &\hspace{-2em}\leq \sup_{A'\ssubset A} \calF^\mu_{\sup}(v, A') + C \delta + \frac{3}{N-3} \left[ \limsup_{h\to 0} \Tilde{\calF}_h^{\mu} (\hat w^h, A'\backslash A'') + C \delta \right] + 3\eta_N.
    \end{align*}
    By sending $\delta \to 0$ and $N \to \infty$, we eventually conclude
    $$
        \calF^\mu_{\sup}(v, A) \leq \sup_{A'\ssubset A} \calF^\mu_{\sup}(v, A'),
    $$
    thereby concluding the proof of inner regularity.
    
    {\em (ii) Subadditivity.} The proof follows in a similar fashion as in (1). We begin by choosing two sequences $(\hat{v}^h)$ and $(\hat{w}^h)$ converging to $v$ in $L^2(\Omega,\mu)$ such that
    $$
        \limsup_{h\to 0} \Tilde{\calF}_h^{\mu}(\hat{v}^h, A) = \calF^\mu_{\sup}(v, A) \qquad \text{and} \qquad \limsup_{h\to 0} \Tilde{\calF}_h^{\mu}(\hat{w}^h, B) = \calF^\mu_{\sup}(v, B).
    $$
    Set $\varepsilon := \text{dist} \left( A', A^c \right)$ and define sets $A_i:= \{ x\in A: ~ \text{dist}(x, A') < i \varepsilon/N \}$ for $i \in \{1, \dots, N \}$. Note that the following inclusions hold  $A' \ssubset A_1 \ssubset \dots \ssubset A_N \ssubset A$. Let $\varphi_i^N$ be a cut-off function between $A_i$ and $A_{i+1}$ with $\| \nabla \varphi_i^N \| \leq 2 N/\varepsilon$. We use the piecewise constant approximation $\hat{\varphi}^{N,h}_i$ to define the sequence:
    $$
        \hat{w}^{N,h}_i := \hat{\varphi}^{N,h}_i \hat{v}^h + (1 - \hat{\varphi}^{N,h}_i) \hat{w}^h.
    $$
    For $h\ll 1$ sufficiently small, it holds that
    \begin{align*}
        \Tilde{\calF}_h^\mu (\hat{w}^{N,h}_i, A'\cup B')
        &\leq \Tilde{\calF}_h^\mu (\hat v^h, A) + \Tilde{\calF}_h^\mu (\hat w^h, B) + \Tilde{\calF}_h^\mu(\hat{w}^{N,h}_i, G_i^{N,h}),
    \end{align*}
    with $G_i^{N,h}$ as in (1). The last term on the right-hand side may be estimated as in (1) to obtain
    \begin{align*}
        \Tilde{\calF}_h^\mu(\hat{w}^{N,h}_i, G_i^{N,h})
        &\leq 3\left[ \Tilde{\calF}_h^{\mu} (\hat w^h,A_{i+2}\setminus\overline{A_{i-1}}) + \Tilde{\calF}_h^{\mu} (\hat v^h,A_{i+2}\setminus\overline{A_{i-1}}) + \frac{C_N}{\varepsilon^2} \|\hat{v}^h - \hat{w}^h \|^2_{L^2(\Omega, \mu)} + \eta_N\right].
    \end{align*}
    Choosing $i(h)$ such that
    \begin{align*}
        \Tilde{\calF}_h^\mu (\hat w^{N,h}_{i(h)}, A'\cup B') \leq \frac{1}{N} \sum_{j=1}^N \Tilde{\calF}_h^\mu (\hat w^{N,h}_j, A'\cup B'),
    \end{align*}
    we then obtain
    \begin{align*}
        \Tilde{\calF}_h^\mu (\hat w^{N,h}_{i(h)}, A'\cup B') &\leq \Tilde{\calF}_h^\mu (\hat v^h, A) + \Tilde{\calF}_h^\mu (\hat w^h, B) + \frac{1}{N} \sum_{j=1}^N \Tilde{\calF}_h^\mu(\hat w^{N,h}_j, G_j^{N,h}),
    \end{align*}
    where the last term may be estimated by
    \begin{align*}
        \frac{1}{N}\sum_{j=1}^N \Tilde{\calF}_h^\mu(\hat w^{N,h}_j, G_j^{N,h})
        &\leq 3 \left[ \frac{\Tilde{\calF}_h^\mu(\hat w^h, A \backslash A') + \Tilde{\calF}_h^\mu(\hat v^h, A \backslash A')}{N}
        + \frac{C_N}{\varepsilon^2} \| \hat{v}^h - \hat{w}^h \|^2_{L^2(\Omega, \mu)} + \eta_N\right].
    \end{align*}
    Taking the limit superior as $h\to 0$ gives
    \begin{align*}
        \calF^\mu_{\sup}(v, A'\cup B') &\leq \limsup_{h\to 0} \Tilde{\calF}_h^\mu (\hat w^{N,h}_{i(h)}, A'\cup B') \\
        &\leq \calF^\mu_{\sup} (v, A) + \calF^\mu_{\sup} (v, B) + o(1)|_{N\to \infty}.
    \end{align*}
    By sending $N \to \infty$ and applying the inner regularity property, we conclude
    $$
        \calF^\mu_{\sup}(v, A\cup B) \leq \calF^\mu_{\sup} (v, A) + \calF^\mu_{\sup} (v, B).
    $$
    
    {\em (iii) Locality.} We first prove $\calF^\mu_{\sup}(v, A) \leq \calF^\mu_{\sup}(w, A)$. The argument is similar to the  previous points. For a fixed $\delta>0$ there exists $A_\delta \ssubset A$ such that $\int_{A\backslash \overline{A_\delta}} |\nabla v|^2 \dd\mu < \delta$. We choose two sequences $(\hat{v}^h)$ and $(\hat{w}^h)$ such that $\hat{v}^h\to v$, $\hat{w}^h\to w$ in $L^2(\Omega, \mu)$ satisfying
    \begin{align*}
        \limsup_{h\to 0} \Tilde{\calF}^{\mu}_h (\hat{v}^h, A\backslash \overline{A_\delta}) &= \calF^\mu_{\sup}(v, A) \leq C \int_{A\backslash \overline{A_\delta}} |\nabla v|^2 \dd\mu < C \delta, \\
        \limsup_{h\to 0} \Tilde{\calF}^{\mu}_h (\hat{w}^h, A) &= \calF^\mu_{\sup}(w, A).
    \end{align*}
    Set $\varepsilon := \text{dist} \left( A_\delta, A^c \right)$ and define sets $A_i:= \{ x\in A: ~ \text{dist}(x, A_\delta) < i \varepsilon/N \}$ for $i \in \{1, \dots, N \}$. Denote by $\varphi_i^N$ a cut-off function between $A_i$ and $A_{i+1}$ with $\| \nabla \varphi_i^N \|_{\sup} \leq 2 N/\varepsilon$. It has a piecewise constant approximation $\hat{\varphi}^{N,h}_i := \bbL \bbP \varphi_i^N$.
    Then, we define
    $$
        \hat{w}^{N,h}_i := \hat{\varphi}^{N,h}_i \hat{w}^h + (1 - \hat{\varphi}^{N,h}_i) \hat{v}^h,
    $$
    with $(\hat{w}^{N,h}_i)$ still converging to $w$ in $L^2(\Omega, \mu)$ as $h\to0$ for any $i=1,\ldots,N$. Similar to the proof of inner regularity, we obtain the existence of some $i(h)\in \{1,\ldots,N\}$ such that
    \begin{align*}
        \limsup_{h\to 0}\Tilde{\calF}^\mu_h (\hat{w}^{N,h}_{i(h)},A) 
        &\leq \calF^\mu_{\sup}(w, A) + O(\delta)|_{\delta\to 0} + o(1)|_{N\to\infty}.
    \end{align*}
    Passing to the limits $\delta\to 0$ and $N\to \infty$ then yields
    $$
        \calF^\mu_{\sup}(v, A) \leq \calF^\mu_{\sup}(w, A).
    $$
    The assertion follows as we can swap the roles of $v$ and $w$.
\end{proof}

We recall that 
$$
    \calF^{\pi, \varphi}(v, A) := \begin{cases}
            \calF^\pi(v, A) & \text{if } v - \varphi\in H^1_0(A), \\
            +\infty & \text{otherwise,}
        \end{cases}
$$
and the corresponding discrete counterpart of $\calF^{\pi, \varphi}$ is
$$
    \calF^{\pi, \varphi}_h(v^h, A) := \begin{cases}
            \calF^\pi_h(v^h, A) & \text{if } v^h = \bbP \varphi =: \varphi^h \text{ on }  \calT^h|_{A^c}, \\
            +\infty & \text{otherwise.}
        \end{cases}
$$

\begingroup
\def\thetheorem{\ref{prop_Gamma_convegence_bc}}
\begin{proposition}
Let $A\in\calO$ be arbitrary with Lipschitz boundary and $\varphi\in H^1(\Omega)$. For any sequence $(\calF^{\pi, \varphi}_h(\cdot, A))$ there exists a subsequence that $\Gamma$-converges in the $L^2(\Omega)$-topology to $\calF^{\pi, \varphi}(\cdot, A)$.
\end{proposition}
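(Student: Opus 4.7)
The plan is to combine the subsequential $\Gamma$-compactness of $\{\Tilde{\calF}^\pi_h(\cdot, A)\}_{h>0}$ from Lemma~\ref{lemma_existence_gamma_limit} with a boundary-gluing construction modeled on the De~Giorgi cut-off argument used in Proposition~\ref{prop_properties_F_sup}. First extract a subsequence (not relabeled) along which $\Tilde{\calF}^\pi_h(\cdot, A) \xrightarrow{\Gamma} \calF^\pi(\cdot, A)$ in $L^2(\Omega)$, with the integral representation from Theorem~\ref{th_identification}, and then prove the $\Gamma$-$\liminf$ and $\Gamma$-$\limsup$ inequalities for $\Tilde{\calF}^{\pi,\varphi}_h$ separately.

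For the $\liminf$, take $v^h \to v$ in $L^2(\Omega)$ with $\liminf_h \Tilde{\calF}^{\pi,\varphi}_h(v^h, A) < \infty$. Up to a subsequence, $\sup_h \calF^\pi_h(v^h, A) < \infty$, $v^h \in \text{PC}(\calT^h)$, and $v^h = \varphi^h := \bbP\varphi$ on $\calT^h|_{A^c}$. The $\liminf$-inequality for $\Tilde{\calF}^\pi_h$ combined with Lemma~\ref{lemma_bounds_Dsup} yields $v|_A \in H^1(A)$ and $\calF^\pi(v, A) \le \liminf_h \calF^\pi_h(v^h, A)$. Since every $x \in A^c$ lies in some cell intersecting $A^c$, we have $v^h \equiv \varphi^h$ pointwise on $A^c$; passing $h \to 0$ in $L^2$ yields $v = \varphi$ $\calL^d$-a.e.\ on $A^c$. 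Lemma~\ref{lemma_BV_bound} applied to the difference $v^h - \varphi^h$ (supported on cells contained in $A$) gives $v - \varphi \in BV(\Omega)$ with support in $\overline{A}$; combined with $v - \varphi \in H^1(A)$, $\varphi \in \Lip(\Omega)$, and the Lipschitz regularity of $\partial A$, the trace of $v - \varphi$ on $\partial A$ from inside $A$ must vanish, so $v - \varphi \in H^1_0(A)$ and $\calF^{\pi,\varphi}(v, A) = \calF^\pi(v, A) \le \liminf_h \Tilde{\calF}^{\pi,\varphi}_h(v^h, A)$.

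For the $\limsup$, given $v \in H^1(\Omega)$ with $v - \varphi \in H^1_0(A)$, pick $\psi_n \in \calC^\infty_c(A)$ with $\psi_n \to v - \varphi$ in $H^1(A)$, and set $v_n := \varphi + \psi_n$ (extended by $\varphi$ outside $A$), so that $v_n \to v$ in $H^1(\Omega)$ and $v_n \equiv \varphi$ on an open neighborhood $A \setminus \overline{A'_n}$ of $\partial A$ with $\supp(\psi_n) \subset A'_n \ssubset A$. For each $n$, the $\Gamma$-$\limsup$ for $\Tilde{\calF}^\pi_h \to \calF^\pi$ provides $\Tilde{v}^h_n \in \text{PC}(\calT^h)$ with $\Tilde{v}^h_n \to v_n$ in $L^2(\Omega)$ and $\limsup_h \calF^\pi_h(\Tilde{v}^h_n, A) \le \calF^\pi(v_n, A)$. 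To enforce the boundary condition, introduce nested sets $A'_n \ssubset A_1 \ssubset \cdots \ssubset A_N \ssubset A$ with smooth cut-offs $\varphi_i^N \in \calC^\infty_c(A_{i+1})$ satisfying $\varphi_i^N \equiv 1$ on $A_i$ and $\|\nabla \varphi_i^N\|_{L^\infty} \le 2N/\varepsilon_n$ (where $\varepsilon_n := \text{dist}(A'_n, A^c)$); set $\hat\varphi_i^{N,h} := \bbL\bbP\varphi_i^N$ and
$$
    v_n^{N,h,i} := \hat\varphi_i^{N,h}\, \Tilde{v}^h_n + \bigl(1 - \hat\varphi_i^{N,h}\bigr)\,\varphi^h,
$$
which equals $\varphi^h$ on $\calT^h|_{A^c}$ for $h$ small enough. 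The fundamental estimate developed in the proof of Proposition~\ref{prop_properties_F_sup}(ii) then gives
$$
    \calF^\pi_h\bigl(v_n^{N,h,i}, A\bigr) \le \calF^\pi_h(\Tilde{v}^h_n, A) + \calF^\pi_h(\varphi^h, A \setminus \overline{A'_n}) + \frac{C_N}{\varepsilon_n^2}\bigl\|\Tilde{v}^h_n - \varphi^h\bigr\|_{L^2(A_{i+2}\setminus \overline{A_{i-1}})}^2 + \eta_N,
$$
with $C_N = O(N^2)$ and $\eta_N \to 0$ as $N \to \infty$. Averaging over $i \in \{1,\dots,N-3\}$ and selecting $i = i(h)$ minimizing the left-hand side, using $v_n \equiv \varphi$ on $A \setminus \overline{A'_n}$ (so the $L^2$-error vanishes as $h \to 0$) and Lemma~\ref{lemma_general_upper_bound} to bound $\calF^\pi_h(\varphi^h, A \setminus \overline{A'_n})$ by a constant multiple of $|A \setminus \overline{A'_n}|$, I pass $h \to 0$, then $N \to \infty$, and finally $n \to \infty$ (using $|A \setminus \overline{A'_n}| \to 0$ and the Radon-measure property of $\calF^\pi(v, \cdot)$ from Proposition~\ref{prop_properties_Gamma_limit}(iii)); a diagonal extraction delivers the required recovery sequence with $\limsup_h \Tilde{\calF}^{\pi,\varphi}_h(v^h, A) \le \calF^{\pi,\varphi}(v, A)$.

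The main obstacle is the fundamental estimate on the thin transition strip $A_{i+2} \setminus \overline{A_{i-1}}$ under the boundary constraint $v^h = \varphi^h$ on $\calT^h|_{A^c}$: the $N$- and $h$-dependence must be tracked carefully to perform the diagonal extraction, and the smoothness condition $v_n \equiv \varphi$ near $\partial A$ (available because $v - \varphi \in H^1_0(A)$) is essential to force the $L^2$-error in the transition strip to vanish in the limit $h \to 0$ before taking $N \to \infty$. The $\liminf$ step additionally relies on the BV bound from Lemma~\ref{lemma_BV_bound} to upgrade the cell-wise boundary condition on $\calT^h|_{A^c}$ to the Sobolev-trace condition $v - \varphi \in H^1_0(A)$ in the limit.
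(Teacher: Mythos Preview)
Your proposal is correct and takes the same route as the paper: the $\liminf$ part passes the discrete boundary condition $v^h=\varphi^h$ on $\calT^h|_{A^c}$ to the limit in $L^2$ and combines this with the Sobolev lower bound to obtain $v-\varphi\in H^1_0(A)$, while the $\limsup$ part glues a recovery sequence for $\calF^\pi_h(\cdot,A)$ to $\varphi^h$ via the De Giorgi cut-off construction of Proposition~\ref{prop_properties_F_sup}. You are more explicit than the paper in two places---you spell out the density approximation of $v-\varphi\in H^1_0(A)$ by compactly supported $\psi_n$ followed by diagonalization (the paper only treats the case $\supp(v-\varphi)\ssubset A$ and calls it the ``approximate $\limsup$''), and you invoke a BV/trace argument for the $\liminf$ (the paper simply asserts $v-\varphi\in H^1_0(A)$ from $v|_A\in H^1(A)$ and $v=\varphi$ on $A^c$)---but the strategy is identical. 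One small caveat: Lemma~\ref{lemma_BV_bound} as stated bounds the BV seminorm by the \emph{global} discrete Fisher information, so to apply it to $v^h-\varphi^h$ you should note that this difference is supported on cells contained in $A$, and that its global discrete energy is controlled by $\calF^\pi_h(v^h,A)+\calF^\pi_h(\varphi^h,\Omega)$, since any edge joining a cell strictly inside $A$ to one strictly inside $A^c$ would force a face of positive $\calH^{d-1}$-measure into $\partial A$.
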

\addtocounter{theorem}{-1}
\endgroup
\begin{proof}
Let us first prove the $\Gamma$-$\liminf$ inequality. We consider a sequence $\bbL v^h \to v$ in $L^2(\Omega)$ such that $\sup_{h>0} \calF^{\pi, \varphi}_h(v^h, A) < \infty$. This implies that $v^h = \varphi^h$ on $\calT^h|_{A^c}$ and  $\calF^{\pi, \varphi}_h(v^h, A) = \calF^\pi_h(v^h, A)$. Consequently, we also have that $\sup_{h>0 }\calF^\pi_h(v^h, A) < \infty$. By the same argument as in Lemma~\ref{lemma_bounds_Dsup}, we deduce that $v\in H^1(A)$. Since $\Gamma$-$\lim \calF^\pi_h(\cdot, A) = \calF^\pi (\cdot, A)$ it remains to prove that $v - \varphi \in H_0^1(A)$.

Notice that $A^c\subset A^c_{\calT^h}$ for all $h>0$. Since $v^h=\varphi^h$ on $\calT^h|_{A^c}$, their piecewise reconstructions satisfy $\bbL v^h = \bbL\varphi^h$ on $A^c_{\calT^h}$, and hence also on $A^c$ for all $h>0$. Using the fact that $\bbL v^h\to v$ and $\bbL \varphi^h\to \varphi$ in $L^2(\Omega)$, we easily deduce that $v=\varphi$ in $L^2(\Omega\setminus A)$. The deduced regularity $v\in H^1(A)$ and assumed regularity $\varphi\in H^1(\Omega)$ then allows to conclude that $v-\varphi\in H_0^1(A)$.
 
Thus, the $\liminf$ inequality follows:
$$
    \liminf_{h\to 0} \calF^{\pi, \varphi}_h(v^h, A) \geq \calF^{\pi, \varphi} (v, A)\,. 
$$
    
Now we show the approximate $\limsup$ inequality. Let $v \in H^1(\Omega)$ such that $\supp (v - \varphi) \ssubset A$. There exists a recovery sequence $v^h \to v$ in $L^2(\Omega)$ such that $\displaystyle\lim_{h\to 0} \calF^\pi_h(v^h, A) = \calF^\pi(v, A) = \calF^{\pi, \varphi} (v, A)$.
    
Set $\varepsilon := \text{dist} \left( \supp(v-\varphi), A^c \right)$ and define sets $A_i:= \{ x\in A: ~ \text{dist}(x, \supp(v-\varphi)) < i \varepsilon/N \}$ for $i \in \{1, \dots, N \}$. Denote by $\varphi_i^N$ a cut-off function between $A_i$ and $A_{i+1}$ with $\| \nabla \varphi_i^N \|_{\sup} \leq 2 N/\varepsilon$. It has a piecewise constant approximation $\hat{\varphi}^{N,h}_i := \bbL \bbP \varphi_i^N$.
Then, we define
$$
    \hat{w}^{N,h}_i := \hat{\varphi}^{N,h}_i \hat{v}^h + (1 - \hat{\varphi}^{N,h}_i) \varphi,
$$
with $(\hat{w}^{N,h}_i)$ still converging to $v$ in $L^2(\Omega)$ as $h\to0$ for any $i=1,\ldots,N$. Similar to the proof of inner regularity, we obtain the existence of some $i(h)\in \{1,\ldots,N\}$ such that
$$
    \limsup_{h\to 0} \Tilde{\calF}^\pi_h(\hat{w}^{N,h}_{i(h)}, A) \leq \limsup_{h\to 0} \Tilde{\calF}^\pi_h(v^h, A) + o(1)|_{N\to\infty}.
$$
Passing $N\to \infty$ yields
$$
    \limsup_{h\to 0} \Tilde{\calF}^{\pi,\varphi}_h(\hat{w}^{N,h}_{i(h)}, A) \leq \calF^{\pi,\varphi} (v, A).
$$

\end{proof}

\bibliographystyle{plain}
\bibliography{ref}

\end{document}